\documentclass[11pt,letterpaper]{amsart}

\usepackage[margin=1.08in]{geometry}
\usepackage{amsmath,amssymb,amsthm,mathtools}
\usepackage{microtype}
\usepackage[T1]{fontenc}
\usepackage{libertinus}
\usepackage{enumitem}
\usepackage{booktabs}
\usepackage{xcolor}
\usepackage{array}
\usepackage{hyperref}
\usepackage{aliascnt}
\usepackage[nameinlink,capitalize]{cleveref}

\hypersetup{
  colorlinks=true,
  linkcolor=black,
  citecolor=black,
  urlcolor=black,
  pdftitle={Polynomial growth of complex polynomial Bohnenblust--Hille constants},
  pdfauthor={Daniel M. Pellegrino and Eduardo V. Teixeira}
}
\setlist{itemsep=0.25em,topsep=0.45em}
\numberwithin{equation}{section}

\newcommand{\CC}{\mathbb C}
\newcommand{\NN}{\mathbb N}
\newcommand{\DD}{\mathbb D}

\newcommand{\e}{\mathrm e}

\newtheorem{theorem}{Theorem}[section]

\newaliascnt{proposition}{theorem}
\newtheorem{proposition}[proposition]{Proposition}
\aliascntresetthe{proposition}

\newaliascnt{lemma}{theorem}
\newtheorem{lemma}[lemma]{Lemma}
\aliascntresetthe{lemma}

\newaliascnt{corollary}{theorem}
\newtheorem{corollary}[corollary]{Corollary}
\aliascntresetthe{corollary}

\theoremstyle{definition}
\newaliascnt{definition}{theorem}

\aliascntresetthe{definition}

\theoremstyle{remark}
\newaliascnt{remark}{theorem}
\newtheorem{remark}[remark]{Remark}
\aliascntresetthe{remark}

\crefname{theorem}{theorem}{theorems}
\Crefname{theorem}{Theorem}{Theorems}
\crefname{proposition}{proposition}{propositions}
\Crefname{proposition}{Proposition}{Propositions}
\crefname{lemma}{lemma}{lemmas}
\Crefname{lemma}{Lemma}{Lemmas}
\crefname{corollary}{corollary}{corollaries}
\Crefname{corollary}{Corollary}{Corollaries}
\crefname{definition}{definition}{definitions}
\Crefname{definition}{Definition}{Definitions}
\crefname{remark}{remark}{remarks}
\Crefname{remark}{Remark}{Remarks}

\title[Polynomial growth of polynomial BH constants]
{Polynomial growth of complex polynomial
Bohnenblust--Hille constants}

\author[D. M. Pellegrino]{Daniel M. Pellegrino}
\address{Departamento de Matem\'atica, Universidade Federal da Para\'iba, Jo\~ao Pessoa, PB, Brazil}
\email{daniel.pellegrino@academico.ufpb.br}

\author[E. V. Teixeira]{Eduardo V. Teixeira}
\address{Department of Mathematics, Oklahoma State University, Stillwater, OK 74078, USA}
\email{eduardo.teixeira@okstate.edu}

\subjclass[2020]{Primary 46G25; Secondary 32A05, 32A08, 46B28}

\keywords{Bohnenblust--Hille inequality, polynomial upper bounds,
exact coefficient recovery, weighted graded estimates, coefficient entropy,  rational inner functions, critical dimension, multidimensional Bohr radius}

\begin{document}

\begin{abstract}
For an $m$-homogeneous polynomial on $\mathbb C^n$, let $D_{m,n}$ denote
the optimal constant in the complex polynomial Bohnenblust--Hille
inequality, and set $D_m:=\sup_{n\ge1}D_{m,n}$.  We prove that the
dimension-free constants $(D_m)$ have at most polynomial growth: there are
absolute constants $B_0,K<\infty$ such that
\[
 D_m\le K m^{B_0}
 \qquad(m\ge1).
\]
This replaces the previously best general estimate
\(
 D_m\le \exp\!\bigl(O(\sqrt{m\log m})\bigr)
\)
by a fixed power of the degree---a qualitative change in the known growth
scale.

The proof has two stages. A phase-preserving fixed-ratio decomposition
retains the exact ancestry of every coefficient and first yields an
explicit quasipolynomial estimate. A weighted graded bootstrap then
prevents the one-step loss from accumulating: balanced degree splits
produce a strict binary-entropy contraction, while dominant powers are
isolated by contractive spectral projections and compressed isometrically
to lower degree. This proves polynomial growth without optimizing the
exponent. A sharper analysis of the same architecture yields
$D_m=o(m^\mu)$ for every $\mu>\beta_\star$, where $\beta_\star<2.47$ is the sharp threshold of the present
two-regime bootstrap. On the lower side, we prove the sharp dimensional criterion
$$
 D_{m,n_m}\longrightarrow1
 \quad\Longleftrightarrow\quad
 n_m=o(m),
$$
together with the certified estimate
$$
 \liminf_{m\to\infty}D_m>1.27.
$$
As an application, the polynomial bound yields an explicit logarithmic
remainder in the multidimensional Bohr-radius asymptotic.
\end{abstract}

\maketitle
\tableofcontents

\section{Introduction}\label{sec:introduction}

\subsection{The inequality and the degree-growth problem}

Let
\[
 P(z)=\sum_{|\alpha|=m}a_\alpha z^\alpha,
 \qquad z\in\mathbb C^n,
\]
be an $m$-homogeneous polynomial, and write
\[
 \|P\|_{\infty,n}:=\sup_{z\in\mathbb D^n}|P(z)|,
 \qquad
 q_m:=\frac{2m}{m+1}.
\]
For a polynomial $Q(z)=\sum_\nu c_\nu z^\nu$, set
\[
 a(Q):=(c_\nu)_\nu,
 \qquad
 \|a(Q)\|_p:=\left(\sum_\nu|c_\nu|^p\right)^{1/p}.
\]
The optimal $n$-dimensional polynomial Bohnenblust--Hille constant is
\begin{equation}\label{eq:Dmn-intro}
 D_{m,n}
 :=
 \sup_{0\ne P\in\mathcal P_m(\mathbb C^n)}
 \frac{\|a(P)\|_{q_m}}{\|P\|_{\infty,n}},
\end{equation}
and the dimension-free constant is
\[
 D_m:=\sup_{n\ge1}D_{m,n}<\infty.
\]
The exponent $q_m$ is optimal.  The essential content of the inequality is
that the constant is independent of $n$; its quantitative problem is
therefore the growth of $D_m$ with the degree.

The inequality originates in Bohnenblust and Hille's solution of Bohr's
absolute-convergence problem for Dirichlet series~\cite{BH}.  Classical
polarization gave estimates of order
\[
 \exp\!\left(\frac12m\log m+O(m)\right).
\]
Defant--Frerick--Ortega-Cerd\`a--Ouna\"ies--Seip
\cite[Theorem~1]{DFOOS} first changed this scale by proving
$D_m\le C^m$.  Their Theorem~2 placed the multidimensional Bohr radius at
the correct logarithmic scale, while Theorem~3 determined the sharp
asymptotic of a Dirichlet-polynomial Sidon constant.  Subsequently, Bayart,
Pellegrino, and Seoane-Sep\'ulveda \cite[Theorem~1.1]{BPS} established
\begin{equation}\label{eq:intro-previous-bound}
 D_m\le \exp\!\bigl(O(\sqrt{m\log m})\bigr),
\end{equation}
and in \cite[Section~6]{BPS} used this subexponential estimate to determine
the exact first-order asymptotic of the Bohr radius of the polydisc.

More recently, Bohnenblust--Hille and Bohr phenomena have been developed in
several genuinely different dimension-free settings.  These include the
Bohnenblust--Hille inequality on finite cyclic groups~\cite{SVZ},
dimension-free discretization of the uniform norm with applications to such
groups~\cite{BKSVZ}, noncommutative and completely bounded variants
~\cite{VZ,ADGP}, and operator-valued Bohr inequalities for free holomorphic
functions on polyballs~\cite{Popescu}, and the Boolean-cube theory of
Defant--Masty\l o--P\'erez~\cite{DMP}.  These developments place dimension-free
coefficient estimates in a broader framework extending well beyond scalar
polynomials on the polydisc.  Earlier work of the first-named author with
Albuquerque, Bayart, and Seoane-Sep\'ulveda~\cite{ABPS} also places the
multilinear Bohnenblust--Hille inequality within a wider family of sharp
mixed-exponent inequalities.

Our principal result replaces \eqref{eq:intro-previous-bound} by a fixed
power of the degree.

\begin{theorem}[Polynomial upper bound]\label{thm:intro-polynomial}
There are absolute constants $B_0,K<\infty$ such that
\begin{equation}\label{eq:intro-polynomial}
 D_m\le K m^{B_0}
 \qquad(m\ge1).
\end{equation}
\end{theorem}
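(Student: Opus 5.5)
The plan follows the two-stage architecture announced in the abstract. Everything is organized around a one-step recursion that bounds $D_m$ in terms of $D_k$ and $D_{m-k}$ for a split $m=k+(m-k)$, with the aim that the multiplicative losses contract instead of compound. For the first stage I would start from the standard Blei/Hölder interpolation used in \cite{BPS}. For $1\le k<m$, the $\ell_{q_m}$ norm of the coefficients is controlled by mixed norms of the form $\ell_{q_k}\bigl(\ell_2\bigr)$ over the splitting of multi-indices into a $k$-part and an $(m-k)$-part. The inner $\ell_2$ sums are handled by Bohnenblust--Hille for degree $k$ applied to partial derivatives (or to "slices" $z\mapsto \partial^\beta P$). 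The $\ell_2$ pieces are handled by Parseval together with a hypercontractive/Bayart comparison $\|f\|_{L^2}\le \|f\|_{L^{q}}\cdot(\text{const})^{m-k}$. Instead of polarizing, which is what costs $\exp(\sqrt{m\log m})$, I would use a \emph{phase-preserving fixed-ratio decomposition}. Each coefficient $a_\alpha$ is written exactly as a sum over its ancestry in the split, with the multinomial weights $\binom{m}{k}^{-1}\binom{\alpha}{\beta}$ tracked explicitly. This keeps the combinatorial factor at most polynomial in $m$ per step, rather than the $m^{k}/k!$-type ratio. The result of this stage is an inequality
\[
 D_m\le C\,m^{c}\,D_k^{\theta}D_{m-k}^{1-\theta},\qquad \theta=\tfrac{k}{m},
\]
or something close to it. Iterating along dyadic splits already gives a quasipolynomial bound $\exp(O(\log^2 m))$, since the loss $m^c$ is paid at each of $\log m$ levels.

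The second stage is a weighted graded bootstrap that removes the accumulation. I would prove by induction a bound of the form $D_j\le K j^{B_0}$ for all $j<m$ and insert it into the recursion. For a balanced split ($\theta\in[\delta,1-\delta]$) the right side is $C m^c K\,(k^{\theta}(m-k)^{1-\theta})^{B_0}$. Since $k^\theta(m-k)^{1-\theta}=m\,e^{-h(\theta)}$ with $h$ the binary entropy, this equals $C m^{c} K m^{B_0} e^{-B_0 h(\theta)}$. The real loss is not $m^c$ but a factor of the graded weights, which I would arrange to be bounded, say $C_0$. Choosing $B_0$ with $e^{-B_0 h(\delta)}C_0\le 1$ then closes the induction. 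This entropy contraction is exactly why the exponent must be taken large.

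The main obstacle is the unbalanced regime, where $P$'s coefficient mass is concentrated on monomials dominated by a single high power $z_i^{j}$ with $j$ close to $m$. There, no balanced split of the multi-indices is available and $h(\theta)\to0$ gives no contraction. To handle it I would decompose $P=\sum_j P_j$ according to the largest exponent, using the rotations $z_i\mapsto e^{it}z_i$. The projection onto a fixed $z_i$-degree is a Fourier coefficient in $t$, so it is contractive in sup norm. I would then write $P_j=z_i^{j}Q(z')$ with $\|Q\|_\infty=\|P_j\|_\infty$ and $\deg Q=m-j$, which is an isometric compression to lower degree. After that, the inductive hypothesis applies to $D_{m-j}$ with a large gain. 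The delicate point is summing over $j$ and $i$ in $\ell_{q_m}$ without losing a factor of $n$. I would first try to control the number of dominant pieces by the fixed-ratio threshold, with at most one dominant variable per monomial, and sum the resulting geometric series in $j$ using $(m-j)^{B_0}\ll m^{B_0}$. The two regimes together close the induction for a suitable absolute $B_0$ and $K$.
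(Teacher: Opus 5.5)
There is a genuine gap. The two steps you leave open are precisely the two ideas on which the paper's proof rests.

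\textbf{The balanced regime.} It cannot close as you set it up. Feeding $D_j\le Kj^{B_0}$ into $D_m\le Cm^cD_k^{\theta}D_{m-k}^{1-\theta}$ gives $D_m\le Cm^c\,\mathrm e^{-B_0h(\theta)}Km^{B_0}$. No fixed factor $\mathrm e^{-B_0h(\delta)}\ge 2^{-B_0}$ beats $m^c$.

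The source of that power is structural in any recursion that sees only homogeneous constants. A colored parent lands in one of $\asymp m$ bidegrees, and bounding each bidegree component separately by $\|P\|_\infty$ costs $m^{1/q_m}$ when they are summed. The ancestry you describe is worse. Spreading $a_\alpha$ over its children with weights $\binom{m}{k}^{-1}\binom{\alpha}{\beta}$ leaves, for $\alpha=(1,\dots,1)$, total $q_m$-mass $\binom{m}{k}^{1-q_m}$, which is exponentially small when $k\asymp m$. The paper instead sends whole coordinates to $x$ or to $y$, so that a parent has a single child of modulus one.

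Saying the loss ``would be arranged'' to be bounded skips the actual mechanism, which has three parts:
\begin{enumerate}
\item The paper works with the graded constant $\Gamma_M(B)$. This is the best constant in $\bigl[\sum_{r\le M}\|a(F_r)\|_{q_r}^2r^{-2B}\bigr]^{1/2}\le\Gamma_M(B)\|F\|_\infty$ over \emph{non-homogeneous} $F$.
\item For fixed $y$, the single polynomial $x\mapsto Q(x,y)$ then controls all normalized row norms $\widehat X_{d,e}$ simultaneously. This is \Cref{lem:bootstrap-normalized-row-column}, where Weissler is applied to non-homogeneous $y$-polynomials so that the factors $(1+1/d)^{e/2}$ disappear.
\item \Cref{lem:bootstrap-mixed-angle} then sums $\widehat X_{d,e}^{\vartheta}\widehat Y_{d,e}^{1-\vartheta}$ over all central bidegrees and all degrees at once.
\end{enumerate}
Combined with whole-coordinate coloring, which captures $\mu$-diffuse monomials with a fixed probability $p$, this yields $\Gamma_M\le c_R\Gamma_M+\cdots$ with $c_R=\kappa_Rp^{-1/q_R}\sqrt{2\mathrm e}\exp\{-(B+1/2)h(a)\}<1$. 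That is an absorption at the same level $M$, not an induction on lower degrees, and no power of $m$ appears anywhere.

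\textbf{The dominant regime.} As you suspect, your treatment is not dimension-free, and the proposed fix does not repair it. Projecting onto $z_i$-degree $j$ one coordinate at a time gives $\|Q_i\|_\infty\le\|P\|_\infty$ only for each $i$ separately. Disjointness of supports yields only $\|a(P_j)\|_{q_m}^{q_m}=\sum_i\|a(Q_i)\|_{q_m}^{q_m}$, so summing the individual bounds costs the number of variables.

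You need one contractive operation that extracts all dominant coordinates together; this is \Cref{lem:bootstrap-cap-recovery}. Color the coordinates randomly with $s=k+1$ colors and project onto total color-$c$ degree $r-k$. Then average over $(r-k)$th roots of unity in every color-$c$ coordinate, so that exactly one color-$c$ variable survives, carrying $z_j^{r-k}$. Finally substitute a fresh variable $x_j$ for $z_j^{r-k}$. The output is a single $s$-homogeneous polynomial $\sum_jx_jQ_j(z_{\omega\ne c})$ with sup norm at most $\|P\|_\infty$, reproducing each parent with probability at least $\mathrm e^{-1}$.

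The gain must moreover come from log-convexity: $\|a(G)\|_{q_r}\le(\Gamma_s(B)s^B)^{s/r}\|F\|_\infty$ with $s\le\theta r$. This makes the inductive constant enter only as $\overline\Gamma^{\,\theta}$ with $\theta<1$. It also produces a power gain $r^{-B(1-\theta)}$ that beats the loss $r^{2/q_r}$ from summing over layers and colors.

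Using $D_{m-j}\le K(m-j)^{B_0}$ directly, as in your ``$(m-j)^{B_0}\ll m^{B_0}$'', does not achieve this. On the $\asymp m$ layers with $m-j$ comparable to $m$ it gains only a constant factor, while $K$ enters linearly. The $\ell_{q_m}$-sum over layers therefore again loses $m^{1/q_m}$.
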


To the best of our knowledge, this is the first polynomial upper bound for
the unrestricted complex polynomial constants.  The proof of polynomiality is intentionally
unoptimized; for example, the elementary closure in
\Cref{sec:polynomial-bootstrap} gives $B_0=5$.  The substantive conclusion is
the change from a superpolynomial scale to a fixed power of the degree.

The same argument admits a separate quantitative analysis.  Define
$\beta_\star$ by \eqref{eq:beta-star-definition}; equivalently,
$\beta_\star$ is the unique balanced value of the diffuse-contraction and
dominant-compression thresholds.  Numerically,
\[
 \beta_\star=2.468449871619\ldots.
\]

\begin{theorem}[Sharpened polynomial exponent]
\label{thm:intro-sharp-polynomial}
For every $\varepsilon>0$ there is $K_\varepsilon<\infty$
such that
\[
 D_m\le K_\varepsilon m^{\beta_\star+\varepsilon}
 \qquad(m\ge1).
\]
In particular, $D_m=\mathrm{o}(m^{5/2})$.
\end{theorem}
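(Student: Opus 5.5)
The plan is to rerun the weighted graded bootstrap behind \Cref{thm:intro-polynomial}, this time tracking every loss exactly instead of absorbing it into the crude closure that gives $B_0=5$. The inductive hypothesis is $D_k\le K k^{\beta}$ for all $k<m$, with $\beta=\beta_\star+\varepsilon$; the goal is to show it reproduces at degree $m$ with the same constant once $m\ge m_0(\varepsilon)$. The finitely many degrees $m<m_0(\varepsilon)$ are absorbed into $K_\varepsilon$, using $D_m\le C^m$ from \cite{DFOOS}. Given $P$ of degree $m$, the phase-preserving fixed-ratio decomposition sorts the coefficients by ancestry into two regimes. The first is a \emph{diffuse} part, where the degree splits as $m=k+(m-k)$ with $k/m$ bounded away from $0$ and $1$. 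The second is a \emph{dominant} part, where a single variable power carries most of the degree.

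\textbf{Diffuse regime.} Applying the inductive hypothesis to the two factors bounds the contribution by roughly $K^2 k^{\beta}(m-k)^{\beta}$ times a combinatorial coefficient-counting factor. With $t=k/m$, the binomial count produces the entropy $e^{mH(t)}$. The $q_m$-mixing, via H\"older with the exponents $q_k,q_{m-k}$ against $q_m$, converts this into a multiplicative factor of the form $m^{\beta}\,\Phi_\beta(t)$. The first quantity to compute is the threshold $\beta_{\mathrm{diff}}$: the least $\beta$ such that $\sup_{t\in[\delta,1-\delta]}\Phi_\beta(t)<1$ uniformly. This supremum is the strict binary-entropy contraction, and it keeps the one-step loss from accumulating. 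I expect the worst case at $t=1/2$, with $\Phi_\beta$ decreasing in $\beta$.

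\textbf{Dominant regime.} Here I would isolate $z_j^{s}$ using the contractive spectral projection, namely averaging over rotations $z_j\mapsto e^{i\theta}z_j$, which does not increase $\|\cdot\|_{\infty,n}$. The isolated part is then compressed isometrically to degree $m-s+1$, or to a comparable lower degree, and the hypothesis is applied there. The cost is a factor $\bigl((m-s)/m\bigr)^{\beta}$ times a loss polynomial in $m$ from the number of possible dominant exponents $s$ and from the projection constants. Summing over $s$ gives a geometric-type series. Its convergence defines a second threshold $\beta_{\mathrm{dom}}$, which I expect to be increasing in the cutoff $\delta$ that separates the regimes. The diffuse threshold, by contrast, decreases as $\delta$ shrinks.

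\textbf{Balancing and closing.} I would then optimize over the regime cutoff $\delta$. By \eqref{eq:beta-star-definition}, the optimal choice is the point where the two threshold curves cross, and the common value there is $\beta_\star$. For $\beta=\beta_\star+\varepsilon$, both regimes contract by a factor $1-c(\varepsilon)$. This margin absorbs the lower-order $\bigl(1+O(\log m/m)\bigr)$ corrections for $m\ge m_0(\varepsilon)$, which closes the induction. The bound $D_m=o(m^{5/2})$ then follows from $\beta_\star<2.47<5/2$, taking $\varepsilon<1/2-0.47$.

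\textbf{Main obstacle.} The main difficulty is the uniformity of the diffuse contraction near the crossing point $\delta_\star$. There the margin is only $O(\varepsilon)$, so every polynomial-in-$m$ loss must be shown to be genuinely of order $m^{o(1)}$, and the sum over split points $k$ must not reintroduce a factor of $m$. I would handle this first. The idea is to let the split sum be dominated by its maximum via the entropy gap away from $t=1/2$, and to use Stirling with explicit remainders near $t=1/2$.
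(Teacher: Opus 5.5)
There is a genuine gap, and it is the one you flag as the main obstacle: an induction on the homogeneous constants $D_k\le Kk^{\beta}$ cannot close, and the remedy you propose does not work.

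\textbf{The diffuse step.} The two-block estimate gives the geometric mean $D_d^{d/m}D_{m-d}^{(m-d)/m}\le Km^{\beta}\mathrm{e}^{-\beta h(d/m)}$. A product $K^2d^\beta(m-d)^\beta$ could never be reproduced with the same $K$. The gain $\mathrm{e}^{-\beta h(t)}$ is a fixed constant for each $t$. It is strongest at $t=1/2$ and weakest at the window edges, so your worst case is misplaced. Nothing decays exponentially in $m$; an $\mathrm{e}^{mH(t)}$ count would be a loss, not a gain. So the sum over the $\asymp m$ central splits cannot be dominated by its maximum. Summing in $q_m$-power costs $m^{1/q_m}\approx m^{1/2}$, which no fixed margin $1-c(\varepsilon)$ absorbs. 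This is exactly why the homogeneous recurrence of \Cref{thm:recurrence} yields only a quasipolynomial bound.

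\textbf{The dominant step.} Your dominant step fails for a similar reason. A factor $((m-s)/m)^{\beta}$ times a polynomial loss in $m$ is not uniformly below $1$, and summing it over $s$ is not geometric. What is needed is for the inductive constant to enter only through a power $\theta<1$, with a gain $m^{-\beta(1-\theta)}$ that beats the loss from summing layers. The paper gets this by interpolating $q_r$ between $q_s$ and $2$, which gives $\bigl(\Gamma_s(B)s^B\bigr)^{s/r}$ with $s/r\le\theta$.

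\textbf{The missing idea: bootstrap a graded quantity.} The paper changes the quantity being bootstrapped. It works with the weighted graded constant $\Gamma_M(B)$, defined on \emph{non-homogeneous} polynomials by $\bigl[\sum_{r\le M}\|a(F_r)\|_{q_r}^2r^{-2B}\bigr]^{1/2}\le\Gamma_M(B)\|F\|_\infty$.
\begin{itemize}
\item For a colored polynomial $Q(x,y)$ and fixed $y$, the slice $x\mapsto Q(x,y)$ contains every $x$-degree at once. Weissler's contraction plus a single application of $\Gamma_M(B)$ therefore controls the normalized rows $\widehat X_{d,e}$ of \emph{all} bidegrees in one weighted $\ell_2$ sum (\Cref{lem:bootstrap-normalized-row-column}), and likewise the columns.
\item The mixed-angle H\"older lemma (\Cref{lem:bootstrap-mixed-angle}) then bounds the sum over bidegrees by $2\mathrm{e}^{-h(a)}$, independently of how many bidegrees there are.
\item The resulting recursion $\Gamma_M\le A+c\,\Gamma_M+E\,\overline\Gamma_{\lfloor\theta M\rfloor}^{\,\theta}$, with $c<1$, closes by absorption and strong induction.
\end{itemize}

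\textbf{Where $\beta_\star$ comes from.} Pointing to \eqref{eq:beta-star-definition} does not produce $\beta_\star$. It is defined through $B_{\rm diff}(a)=\log(2\sqrt{\mathrm e})/h(a)-1/2$ and $B_{\rm cap}(a)=3/(2\mu(a))$, and each piece comes from an ingredient absent from your plan:
\begin{itemize}
\item Capture probability $p=1/2$ for whole-coordinate coloring comes from Keller--Klein's one-standard-deviation theorem, so that $p^{-1/2}\sqrt{2\mathrm e}=2\sqrt{\mathrm e}$.
\item The cap $\mu(a)$ comes from the variance bound of \Cref{lem:sharp-diffuse-variance}, matched to $(1-2a)^2$.
\item The shift $B+1/2$ comes from the mixed-angle lemma.
\item The constant $3/2$ comes from square-summability over levels of $r^{2/q_r-B(1-\theta)}$.
\end{itemize}
Your single cutoff $\delta$ also conflates two separate parameters. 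The diffuse/dominant split is by the largest exponent relative to $\mu r$. The window $[ar,(1-a)r]$ governs which descendants are kept, and diffuse parents are recovered only with probability $p$.
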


The number $\beta_\star$ is not proposed as the optimal exponent for
$(D_m)$.  It is the exact threshold of the particular one-window,
two-regime bootstrap analyzed in \Cref{sec:sharp-polynomial-exponent}.  Before
this final optimization, the same proof already gives
$D_m\le K_\varepsilon m^{3+\varepsilon}$ for every $\varepsilon>0$.

A first stage of the upper argument gives the explicit quasipolynomial
estimate
\begin{equation}\label{eq:intro-quasipoly}
 \limsup_{m\to\infty}
 \frac{\log D_m}{(\log m)^2}
 \le
 \frac{5}{8\,\log\!\bigl(4/(1+\sqrt5)\bigr)}.
\end{equation}
Its role is conceptual: it isolates the fixed-ratio geometry and the
scale-by-scale loss that the weighted bootstrap subsequently removes.

The lower theory gives a complementary answer to the question of
contractivity.

\begin{theorem}[Critical dimension and noncontractivity]
\label{thm:intro-lower-package}
The following statements hold.
\begin{enumerate}[label=\textup{(\roman*)}]
\item For every sequence of positive integers $(n_m)$,
\begin{equation}\label{eq:intro-critical-dimension}
 D_{m,n_m}\longrightarrow1
 \quad\Longleftrightarrow\quad
 n_m=o(m).
\end{equation}
\item The dimension-free constants remain uniformly separated from one:
\begin{equation}\label{eq:intro-noncontractivity}
 \liminf_{m\to\infty}D_m>1.27.
\end{equation}
\end{enumerate}
\end{theorem}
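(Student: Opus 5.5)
For part (i), I will treat the two directions separately. Both rest on the observation that $D_{m,n}\ge 1$ always (take $P=z_1^m$), so only upper and lower deviations from $1$ matter.

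\emph{Sufficiency} ($n_m=o(m)\Rightarrow D_{m,n_m}\to1$). I start from the $\ell_2$ bound given by Parseval on the torus, $\|a(P)\|_2\le\|P\|_{\infty,n}$. I then pass from $\ell_2$ to $\ell_{q_m}$ by H\"older over the support. The number of monomials of degree $m$ in $n$ variables is $N=\binom{m+n-1}{n-1}$, so
\[
 \|a(P)\|_{q_m}\le N^{1/q_m-1/2}\|a(P)\|_2
 =N^{\frac{1}{2m}}\|P\|_{\infty,n},
\]
because $1/q_m-1/2=\frac{m+1}{2m}-\frac12=\frac1{2m}$. Using $\log N\le (n-1)\log\bigl(e(m+n)/(n-1)\bigr)$, the quantity $\frac{\log N}{2m}$ is bounded by $\frac{n_m}{2m}\log\frac{Cm}{n_m}$. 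This tends to $0$ whenever $n_m/m\to0$, since $x\log(1/x)\to0$ as $x\to0$. This direction is routine.

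\emph{Necessity.} If $n_m\ge cm$ along a subsequence, I will exhibit polynomials with ratio bounded below by some $1+\delta(c)>1$. The natural test objects are random or Kahane--Salem--Zygmund polynomials, or more concretely products of linear forms. My first choice is $P(z)=\bigl(\frac{1}{k}\sum_{j\le k} z_j\bigr)^m$ with $k\asymp m$ variables. Its sup norm is $1$. Its coefficients are $k^{-m}\binom{m}{\alpha}$, and their $\ell_{q_m}$ norm can be computed by a multinomial/large-deviation (Laplace) analysis. Since $q_m=2-\frac{2}{m+1}$, the $\ell_{q}$ norm equals $\bigl(\sum_\alpha \mu(\alpha)^{q}\bigr)^{1/q}$ for the multinomial law $\mu$. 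This equals $\exp\bigl(\frac{1}{m}H_{\text{eff}}\bigr)\cdot\|\mu\|_2^{\ldots}$ to first order, and I expect it to give $\exp(\Theta(1))$ when $k/m$ is fixed, with the exponent depending on $k/m$. If this Rényi-entropy computation only gives a ratio tending to $1$, I will replace it with the product $\prod_{i\le m}(z_{2i-1}+z_{2i})/2$ or with the Bohr-type block examples that give $(1+\delta)$ per block. Monotonicity of $D_{m,n}$ in $n$ then reduces necessity to $n_m=\lfloor cm\rfloor$. I expect this step to be the \textbf{main obstacle}: making the lower bound strictly separated from $1$ uniformly in $m$. The computation reduces to an asymptotic of $\frac1m\log\sum_\alpha\binom{m}{\alpha}^{q_m}k^{-mq_m}$, which I would handle with the local CLT for multinomials.

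For part (ii), I take $n=\lambda m$ and optimize the same family over $\lambda$ (and possibly over mixtures such as $(z_1+\dots+z_k)^{m-r}\cdot(\text{extremal low-degree factor})$). This yields $\liminf_m D_m\ge \sup_\lambda F(\lambda)$, where $F$ is the explicit limit functional from the Laplace asymptotics. I will then certify numerically that $\sup_\lambda F(\lambda)>1.27$ by evaluating $F$ at a chosen rational $\lambda$ and bounding the error terms rigorously. If the symmetric family falls short of $1.27$, I would instead use tensor products of a fixed small-dimensional extremal (found by computer) with the symmetric family, exploiting submultiplicativity-type behavior of the ratio under products in disjoint variables.
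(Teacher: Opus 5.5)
Your sufficiency argument in (i) is the paper's own. H\"older and Parseval give $D_{m,n}\le\binom{m+n-1}{m}^{1/(2m)}$, which tends to $1$ when $n_m=o(m)$. The necessity direction and all of (ii), however, have a genuine gap: none of your test families can give a ratio above $1$.

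\begin{itemize}
\item For $P=\bigl(\frac1k\sum_{j\le k}z_j\bigr)^m$, and for any polynomial with nonnegative coefficients, $\|P\|_\infty=P(1,\dots,1)=\|a(P)\|_1\ge\|a(P)\|_{q_m}$. With $k\asymp m$ the ratio is in fact exponentially small. Your R\'enyi heuristic normalizes by $\|a\|_2$, but here the denominator is $\|a\|_1$, which is exponentially larger.
\item The product $\prod_{i\le m}(z_{2i-1}+z_{2i})/2$ has ratio exactly $2^{-(m-1)/2}$.
\item The Kahane--Salem--Zygmund bound controls $\|P\|_\infty$ only up to a factor of order $\sqrt{n\log m}$ times $\|a(P)\|_2$. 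That swamps the gain $N^{1/(2m)}=O(1)$ available when $n\asymp m$.
\item Let $P_k$ be the product of $k=m/d$ copies of a fixed $d$-homogeneous $Q$ in disjoint variables. The identity $\log(\|a\|_{q_m}/\|a\|_2)=H_{s_m}(p)/(2m)$ gives
\[
 \log\frac{\|a(P_k)\|_{q_m}}{\|P_k\|_\infty}
 =-\frac md\log\frac{\|Q\|_\infty}{\|a(Q)\|_2}+\frac{h(Q)}{2d}+o(1),
\]
where $h(Q)$ is the Shannon entropy of the normalized squared coefficients of $Q$. This tends to $-\infty$ unless $|Q|$ is constant on the torus. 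A polynomial with that property is a monomial, and then $h(Q)=0$.
\end{itemize}
So a computer-found low-degree extremal cannot be propagated by products. The exponent $q_m$ moves with the total degree, and only factors with $\|Q\|_\infty=\|a(Q)\|_2$ tensorize without exponential loss.

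The missing idea is to achieve two things at once: $\|P\|_\infty/\|a(P)\|_2\to1$, and squared-coefficient entropy of order $m$. The R\'enyi identity then yields a gain $\exp(\Theta(1))$. Since a polynomial cannot be inner unless it is a monomial, the paper starts from a non-polynomial rational inner function.
\begin{itemize}
\item It takes $r=\lfloor m/\lambda\rfloor$ copies of the Blaschke factor $b_a(z)=(a-z)/(1-az)$ in disjoint variables.
\item It discards all homogeneous components of total degree above $m$. By Cauchy's estimate at a radius $u>1$, this costs only $O(e^{-cm})$ in sup norm once $\lambda>\Lambda(b_a)=(1+a)/(1-a)$.
\item It homogenizes with one extra variable via $z_0^{m-s}$. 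This preserves the sup norm by the maximum principle and keeps coefficient labels distinct.
\end{itemize}
A Chernoff estimate for the $q_m$-tilted coefficient law then gives $\liminf_m D_{m,\lfloor m/\lambda\rfloor+1}\ge\exp(H(a^2)/\lambda)>1$, with $H$ the binary entropy. Monotonicity in $n$ and the choice $\lambda>\max\{\Lambda(b_a),1/\eta\}$ yield necessity whenever $n_m\ge\eta m$ along a subsequence.

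For (ii), even the optimized Blaschke seed gives only about $1.21$. The constant $1.27$ comes from the principle $\liminf_m D_m\ge\exp\bigl(h(\Phi)/(2\Lambda(\Phi))\bigr)$, applied to the bivariate seed $\Phi=\widetilde p/p$ with $p=1+tz-tw-szw$, $t=9/19$, $s=11/36$. There $\Lambda(\Phi)<22/5$ is certified by positivity of a quadratic form on $\mathbb T^2$, and $h(\Phi)>2.1313$ by an exact rational computation.
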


Thus the number of variables may diverge at any sublinear rate without
changing the contractive limit, whereas linear growth along a single
subsequence already supports a persistent obstruction.  In this precise
sense, the degree itself is the critical ambient dimension.

The polynomial theorem also refines a classical consequence of the
Bohnenblust--Hille inequality.  Let $\mathfrak b_n$ denote the Bohr radius of
$\mathbb D^n$, namely the largest $r\in(0,1)$ for which
\begin{equation}\label{eq:intro-bohr-definition}
 \sum_{\alpha\in\mathbb N_0^n}|a_\alpha|r^{|\alpha|}
 \le
 \left\|\sum_{\alpha\in\mathbb N_0^n}a_\alpha z^\alpha
 \right\|_{L^\infty(\mathbb D^n)}
\end{equation}
holds for every polynomial.

\begin{theorem}[Quantitative multidimensional Bohr radius]
\label{thm:intro-bohr}
For every
\[
 A>2\beta_\star+\frac32
 =6.436899743238\ldots
\]
and all sufficiently large $n$,
\begin{equation}\label{eq:intro-bohr-two-sided}
 \sqrt{\frac{\log n-A\log\log n}{n}}
 \le \mathfrak b_n
 \le
 \sqrt{\frac{\log n}{n}}
 \exp\!\left\{
 \left(\frac34+o(1)\right)
 \frac{\log\log n}{\log n}
 \right\}.
\end{equation}
Consequently,
\begin{equation}\label{eq:intro-bohr-rate}
 \mathfrak b_n
 =
 \sqrt{\frac{\log n}{n}}
 \left(1+O\!\left(\frac{\log\log n}{\log n}\right)\right).
\end{equation}
\end{theorem}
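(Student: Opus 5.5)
The plan is the standard two-sided argument of Defant--Frerick--Ortega-Cerd\`a--Ouna\"ies--Seip and Bayart--Pellegrino--Seoane-Sep\'ulveda, with the polynomial bound of \Cref{thm:intro-sharp-polynomial} inserted on the lower side. For the \emph{lower bound} we use the reduction of the Bohr radius to homogeneous parts. By the usual Wiener/Carath\'eodory-type estimate $|a_0|+\|P_m\|_\infty\le\|P\|_\infty$ for $m\ge1$, it suffices to find $r$ with
\[
\sum_{m\ge1} r^m \sum_{|\alpha|=m}|a_\alpha|\le \|P\|_\infty-|a_0|
\]
for every $P$. Equivalently, we need $\sum_m r^m S_{m,n}\le 1$ (up to the standard handling of $a_0$), where $S_{m,n}$ denotes the $m$-homogeneous Sidon constant. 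Apply H\"older to pass from the $\ell^1$ to the $\ell^{q_m}$ norm of the coefficients. The number of monomials is $\binom{n+m-1}{m}$, and $1-1/q_m=(m-1)/(2m)$, so
\[
S_{m,n}\le D_m\binom{n+m-1}{m}^{\frac{m-1}{2m}}.
\]
Insert $D_m\le K_\varepsilon m^{\beta_\star+\varepsilon}$ and use $\binom{n+m-1}{m}\le (\e(n+m)/m)^m$. The $m$-th term is then at most
\[
K_\varepsilon m^{\beta_\star+\varepsilon}\Bigl(r\sqrt{\e(n+m)/m}\Bigr)^m\bigl(\e(n+m)/m\bigr)^{-1/2}.
\]

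Set $r=\sqrt{(\log n-A\log\log n)/n}$ and split the sum at $m\approx\log n$.
\begin{itemize}
\item For $m\ge c\log n$ the bracket $r\sqrt{\e(n+m)/m}$ is at most a constant below $1$ (or is controlled by a crude $C^m$-type bound for very large $m$). This tail is then negligible.
\item For $m$ small compared to $\log n$, the prefactor $(n/m)^{-1/2}$ dominates and these terms are also small.
\item The critical window is $m\sim\log n$. Here the optimization gives $(r^2\e n/m)^{m/2}$ maximized near $m=r^2n$, with value $\approx \e^{r^2 n/2}=n^{1/2}(\log n)^{-A/2}$. This must beat $m^{\beta_\star+\varepsilon}(m/n)^{1/2}\cdot(\text{window width } \sqrt{\log n})$, i.e.
\[
n^{1/2}(\log n)^{-A/2}\cdot(\log n)^{\beta_\star+\varepsilon+1/2+1/4}\, n^{-1/2}\ll1,
\]
which holds precisely when $A/2>\beta_\star+3/4$, i.e. $A>2\beta_\star+3/2$.
\end{itemize}
Doing this bookkeeping carefully, with Stirling for the binomial rather than the crude bound, is the main obstacle. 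The exponent $3/2$ must come out exactly: the window width contributes $1/4$, the factor $(\log n/n)^{1/2}$ against $n^{1/2}$ contributes $1/2$, and the $\varepsilon$ is absorbed by the strict inequality.

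For the \emph{upper bound} I would use the DFOOS/BPS probabilistic construction. Take random Kahane--Salem--Zygmund (Bayart's) polynomials $\sum_{|\alpha|=m}\pm z^\alpha$ in degree $m\approx\log n$ with sup-norm
\[
\lesssim\sqrt{n\log m\,\binom{n+m-1}{m}}.
\]
The Bohr inequality applied to this $P$ forces
\[
\mathfrak b_n^m\binom{n+m-1}{m}\le C\sqrt{n\log m\binom{n+m-1}{m}},
\]
so $\mathfrak b_n\le (Cn\log m)^{1/(2m)}\binom{n+m-1}{m}^{-1/(2m)}$. With Stirling, $\binom{n+m}{m}^{1/m}\approx \e n/m\cdot(1+O(m/n))\cdot(2\pi m)^{-1/(2m)}$. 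Choosing $m=\lfloor\log n\rfloor$ gives
\[
\sqrt{\log n/n}\,\exp\bigl\{(\tfrac34+o(1))\log\log n/\log n\bigr\}:
\]
the term $(\log n)^{1/(2m)}$ (from $\log m$ and the Stirling factor) combines with $(m/\e n)^{1/2}$ and $n^{1/(2m)}=\e^{1/2}$, and the $\log\log n$ contributions total $3/4$.

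Finally, \eqref{eq:intro-bohr-rate} follows by expanding $\sqrt{1-A\log\log n/\log n}=1-O(\log\log n/\log n)$ together with $\exp\{O(\log\log n/\log n)\}=1+O(\log\log n/\log n)$.
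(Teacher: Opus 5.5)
Your proposal follows the same route as the paper. For the lower bound you use Wiener's estimate, H\"older with $1-1/q_m=(m-1)/(2m)$, the bound $D_m\le Km^B$ with $\beta_\star<B<\frac12(A-\frac32)$, and a Laplace analysis around $m\approx x:=\log n-A\log\log n$. For the upper bound you use a Kahane--Salem--Zygmund polynomial of degree $m=\lfloor\log n\rfloor$. The argument works, but three details are wrong as written and should be fixed.

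First, the inequality $|a_0|+\|P_m\|_\infty\le\|f\|_\infty$ is false. For $f(z)=\frac12+z-\frac12z^2$ one has $|a_0|+\|P_1\|_\infty=\frac32>\sqrt2=\|f\|_\infty$. The correct Wiener bound is $\|P_m\|_\infty\le1-|a_0|^2$ when $\|f\|_\infty\le1$. You therefore need $\sum_m r^m S_{m,n}\le\frac12$ rather than $\le1$. This is harmless, because you show the sum is $o(1)$.

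Second, your attribution of the exponent $\frac34$ is wrong, even though the total is right.
\begin{itemize}
\item After Stirling, each term is $\lesssim n^{-1/2}m^{B+1/4}(\mathrm e x/m)^{m/2}$. The factor $\binom{n+m-1}{m}^{-1/(2m)}\approx(m/\mathrm e n)^{1/2}$ gives $m^{1/2}$. However, $\binom{n+m-1}{m}^{1/2}$ carries $(2\pi m)^{-1/4}$, which leaves $m^{1/4}$.
\item The Gaussian window around $m=x$ has width $\asymp\sqrt x$ and contributes $x^{1/2}$, not $x^{1/4}$.
\item Summing gives $n^{-1/2}\mathrm e^{x/2}x^{B+3/4}=O((\log n)^{B+3/4-A/2})$, exactly as in the paper. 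The paper gets the sharp per-term bound from $\binom{n+m-1}{m}\le\frac{n^m}{m!}\mathrm e^{m(m-1)/(2n)}$ for $m\le\sqrt n$, together with $m!\ge c\sqrt m\,(m/\mathrm e)^m$.
\end{itemize}
Your intermediate display $m^{\beta_\star+\varepsilon}(m/n)^{1/2}\sqrt{\log n}$ is what the crude bound $(\mathrm e(n+m)/m)^m$ produces. Taken literally, it would only give $A>2\beta_\star+2$.

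Third, in the upper bound, $(\log m)^{1/(2m)}$ equals $\exp\{O(\log\log\log n/\log n)\}$, not $(\log n)^{1/(2m)}$. Use the KSZ bound exactly as you state it, $C\sqrt{n\log m}\binom{n+m-1}{m}^{1/2}$, together with $\binom{n+m-1}{m}\ge n^m/m!$. Then the only $\log\log n$ contribution is $(2\pi m)^{1/(4m)}$ from Stirling. You therefore obtain $\exp\{(\frac14+o(1))\frac{\log\log n}{\log n}\}$, which implies the stated bound a fortiori. The paper's constant $\frac34$ comes from its KSZ estimate $\kappa\sqrt{m\log m}\,(m!)^{1/2}n^{(m+1)/2}$. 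Its extra $\sqrt m$ contributes $m^{1/(2m)}$, i.e.\ an additional $\frac12$.
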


The first-order equivalence
$\mathfrak b_n\sim\sqrt{(\log n)/n}$ was proved in
\cite[Section~6]{BPS}.  The relevance of polynomiality to the remainder is
seen at the saddle degree $m\asymp\log n$: the estimate
$\log D_m=O(\log m)$ contributes only $O(\log\log n)$, precisely the scale
visible in \eqref{eq:intro-bohr-rate}.

\subsection{The two proof mechanisms}

\paragraph{Upper bounds.}
The transition underlying \eqref{eq:intro-previous-bound} is rigid at the
scale $k\asymp\sqrt{m/\log m}$.  We replace it by a fixed-ratio reduction.
Each coordinate is sent independently to $x_j$, to $y_j$, or, with small
probability, to $(x_j+y_j)/2$.  If a parent monomial $z^\alpha$ produces a
child $x^\beta y^\gamma$, then
\[
 \alpha=\beta+\gamma.
\]
This exact ancestry prevents distinct parent coefficients from colliding
before absolute values are taken.  A uniform central-capture theorem shows
that a fixed central window of bidegrees retains at least
$c_\varepsilon m^{-3/2}$ of the $q_m$-mass of every parent, independently of
the number of variables.

The analytic input is a fractional two-block estimate.  If $m=d+e$ and $Q$
is $(d,e)$-bihomogeneous, then
\begin{equation}\label{eq:intro-two-block}
 \|a(Q)\|_{q_m}
 \le
 \exp(1/2)D_d^{d/m}D_e^{e/m}\|Q\|_\infty.
\end{equation}
The fractional powers are forced by
\[
 \frac1{q_m}
 =
 \frac dm\frac1{q_d}+\frac em\frac12
 =
 \frac dm\frac12+\frac em\frac1{q_e}.
\]
Combining \eqref{eq:intro-two-block} with exact ancestry and central capture
gives, for $0<\varepsilon<(3-\sqrt5)/4$,
\begin{equation}\label{eq:intro-fixed-ratio-recurrence}
 D_m
 \le
 C_\varepsilon m^{5/(2q_m)}
 \mathcal D_{\lceil(1-\varepsilon)m\rceil},
 \qquad
 \mathcal D_r:=\max_{1\le j\le r}D_j.
\end{equation}
The envelope is essential because no monotonicity of $(D_m)$ is assumed.
Iterating \eqref{eq:intro-fixed-ratio-recurrence} proves
\eqref{eq:intro-quasipoly}.

The polynomial theorem does not improve this iteration; it avoids repeating
its loss.  We place all homogeneous levels in a weighted square function.  If
$r=d+e$ and $\theta=d/r$, the weight contributes
\[
 \frac{d^{B\theta}e^{B(1-\theta)}}{r^B}
 =
 \exp\{-Bh(\theta)\},
\]
where $h$ is the binary entropy.  Balanced splits are therefore
contractive.  If one coordinate carries more than half of the degree,
contractive color and root-of-unity projections recover the corresponding
layer, and replacing the dominant power by a fresh variable compresses it
isometrically to lower degree.

The proof is organized through a parameterized closure criterion.  An
elementary Chebyshev estimate, with no attempt at optimization, already gives
a uniform weighted bound for every $B>9/2$ and hence proves polynomial
growth.  The sharpening is then separated from the main mechanism.  The
earlier small-ball estimate of Boppana--Holzman
\cite[Theorem~4]{BoppanaHolzman} yields every exponent above $3$.
Keller--Klein's sharp one-standard-deviation theorem
\cite[Theorem~1.2]{KellerKlein}, combined with
the exact variance under a coordinate cap and the true compression ratio,
leads to the threshold $\beta_\star$ defined in
\eqref{eq:beta-star-definition}.  In every case the final recursion has the
form
\[
 \Gamma_M
 \le
 A+c\Gamma_M
 +E\,\overline\Gamma_{\lfloor\theta M\rfloor}^{\,\theta},
 \qquad c<1,\quad 0<\theta<1,
\]
and closes by absorption and strong induction.

\paragraph{Lower bounds.}
For a nonzero $m$-homogeneous polynomial $P$, define
\[
 p_\alpha:=\frac{|a_\alpha|^2}{\|a(P)\|_2^2},
 \qquad
 s_m:=\frac{m}{m+1}.
\]
The exact identity
\begin{equation}\label{eq:intro-renyi}
 \log\frac{\|a(P)\|_{q_m}}{\|a(P)\|_2}
 =
 \frac{H_{s_m}(p)}{2m}
\end{equation}
shows that the critical coefficient gain is R\'enyi entropy divided by the
degree.  If $\Phi$ is a nonconstant rational inner function on
$\mathbb D^d$, analytic past the closed polydisc, tensorization makes the
Shannon entropy $h(\Phi)$ additive.  The competing quantity is the radial
index
\[
 \Lambda(\Phi)
 :=
 \limsup_{u\downarrow1}
 \frac{\log\|\Phi(u\,\cdot)\|_\infty}{\log u},
\]
which controls the truncation cost.  Homogenization with one additional
variable preserves coefficient labels injectively, and the resulting
principle is
\begin{equation}\label{eq:intro-entropy-radial}
 \liminf_{m\to\infty}D_m
 \ge
 \exp\!\left(\frac{h(\Phi)}{2\Lambda(\Phi)}\right).
\end{equation}
A one-variable Blaschke factor already proves the sharpness of the linear
dimensional scale.  For the explicit bivariate seed introduced in
\eqref{eq:Phi}, we certify
\[
 h(\Phi)>2.1313,
 \qquad
 \Lambda(\Phi)<\frac{22}{5}.
\]
The first inequality is a finite exact rational computation; the second
reduces to positivity of an explicit quadratic form.

\subsection{Organization}

\Cref{sec:classical-balance,sec:fractional,sec:hybrid} develop the
fixed-ratio precursor.  \Cref{sec:polynomial-bootstrap} proves polynomial
growth by an unoptimized weighted closure and then sharpens the exponent in
\Cref{sec:sharp-polynomial-exponent}.  The lower theory is developed in
\Cref{sec:lower-principle,sec:direct-blaschke,sec:critical-dimension,sec:explicit-witness}.  The Bohr-radius application appears in
\Cref{sec:bohr-second-order}.  The elementary numerical certificates used in the weighted bootstrap, the exact
entropy certificate, and its exact verifier are given in Appendices~A--C.

\part{Upper bound theory}

\section{From the classical transition to fixed-ratio reduction}
\label{sec:classical-balance}

We begin by identifying precisely where the classical degree-reduction
argument loses scale.  Let $B_k^{\mathrm{mult}}$ denote the optimal complex
$k$-linear Bohnenblust--Hille constant.  The degree-reduction estimate
\cite[Theorem~5.2]{BPS} states that, for $1\le k<m$,
\begin{equation}\label{eq:BPS}
 D_m\le C(m,k)B_k^{\mathrm{mult}},
\end{equation}
where
\begin{equation}\label{eq:Cmk}
 C(m,k):=
 \left(1+\frac1k\right)^{(m-k)/2}
 \frac{m^m}{(m-k)^{m-k}}
 \left(\frac{(m-k)!}{m!}\right)^{1/2}.
\end{equation}
The multilinear factor grows only polynomially in $k$; indeed,
\cite[Corollary~3.2]{BPS} gives an absolute constant $\kappa$ such that
\begin{equation}\label{eq:BPS-multilinear-polynomial}
 B_k^{\mathrm{mult}}
 \le
 \kappa k^{(1-\gamma)/2},
\end{equation}
where $\gamma$ is Euler's constant.  It follows that the leading asymptotic
behavior of \eqref{eq:BPS} is governed entirely by the transition factor
$C(m,k)$.

\begin{proposition}[Rigidity of the classical critical scale]
\label{prop:critical}
Set
\[
 x_m:=\sqrt{\frac{m}{\log m}}.
\]
If $k=k(m)$ satisfies
\[
 \frac{k}{x_m}\longrightarrow t\in(0,\infty),
\]
then
\begin{equation}\label{eq:classical-critical-asymptotic}
 \log C(m,k)
 =
 \frac12\left(t+\frac1t\right)\sqrt{m\log m}
 +o\!\left(\sqrt{m\log m}\right).
\end{equation}
The same asymptotic formula holds with $C(m,k)$ replaced by
$C(m,k)B_k^{\mathrm{mult}}$.  Consequently, among transitions on this
critical scale, the leading coefficient in \eqref{eq:BPS} is minimized
uniquely when
\[
 k=(1+o(1))\sqrt{\frac{m}{\log m}},
\]
and the minimum coefficient is $1$.
\end{proposition}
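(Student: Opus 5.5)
We compute $\log C(m,k)$ directly with Stirling's formula, keeping only the terms of order $\sqrt{m\log m}$. Write
\[
 \log C(m,k)=\frac{m-k}{2}\log\Bigl(1+\frac1k\Bigr)
 +m\log m-(m-k)\log(m-k)
 -\frac12\bigl(\log m!-\log (m-k)!\bigr).
\]
We treat the three pieces separately, with $k\sim t x_m$, so that $k\to\infty$ and $k^2/m\sim t^2/\log m\to0$.

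\textbf{First term.} Since $\log(1+1/k)=1/k+O(k^{-2})$,
\[
 \frac{m-k}{2}\log\Bigl(1+\frac1k\Bigr)=\frac{m}{2k}+O(1)+O\Bigl(\frac m{k^2}\Bigr).
\]
Here $m/(2k)\sim \frac1{2t}\sqrt{m\log m}$, while $m/k^2\sim\log m/t^2=o(\sqrt{m\log m})$.

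\textbf{Second term.} Write $m\log m-(m-k)\log(m-k)=k\log m-(m-k)\log(1-k/m)$. Expanding, $-(m-k)\log(1-k/m)=k+O(k^2/m)$.

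\textbf{Third term.} By Stirling, $\log m!-\log(m-k)!=\sum_{j=m-k+1}^m\log j = k\log m+O(k^2/m)$, since each $\log j=\log m+O(k/m)$. Adding the second and third terms leaves
\[
 \frac k2\log m+k+O(k^2/m)=\frac t2\sqrt{m\log m}\,(1+o(1)),
\]
because $k\log m\sim t\sqrt{m\log m}$ and $k=o(\sqrt{m\log m})$.

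Summing the three pieces gives \eqref{eq:classical-critical-asymptotic}. The only care needed is bookkeeping: every error term ($O(1)$, $O(m/k^2)=O(\log m)$, $O(k)$, $O(k^2/m)$) must be checked to be $o(\sqrt{m\log m})$ on this scale. The $m/k^2$ term is where the scale $x_m$ matters, but even it is only logarithmic.

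\textbf{Including $B_k^{\mathrm{mult}}$.} By \eqref{eq:BPS-multilinear-polynomial}, $0\le\log B_k^{\mathrm{mult}}\le\log\kappa+\frac{1-\gamma}2\log k=O(\log m)$, which is negligible. The lower bound $B_k^{\mathrm{mult}}\ge1$ is trivial, since a single coefficient can be tested.

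\textbf{Minimization.} The map $t\mapsto\frac12(t+1/t)$ on $(0,\infty)$ has its unique minimum $1$ at $t=1$. So among sequences with $k/x_m\to t$, the leading coefficient is minimized exactly when $t=1$, that is, $k=(1+o(1))x_m$, and the minimal value is $1$.

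To make ``uniquely'' precise for arbitrary $k$ on the critical scale: a sequence with $k/x_m$ bounded away from $0$ and $\infty$ has subsequences with limits $t$, so the statement applies along subsequences. Since $t+1/t>2$ for $t\ne1$, any sequence attaining leading coefficient $1$ must have every subsequential limit equal to $1$.
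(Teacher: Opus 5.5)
Your proposal is correct and follows essentially the same route as the paper: a Taylor expansion of the factor $(1+1/k)^{(m-k)/2}$ giving $\frac{m}{2k}+O(1+m/k^2)$, reduction of the remaining factor to $\frac k2\log m+k+O(1+k^2/m)$, negligibility of $\log B_k^{\mathrm{mult}}=O(\log k)$, and minimization of $t+1/t$. The only differences are minor. You evaluate $\log m!-\log(m-k)!$ through the exact sum $\sum_{j=m-k+1}^m\log j$, so the appeal to Stirling is not actually needed there. Your subsequence remark also makes the uniqueness claim slightly more precise than the paper states it.
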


\begin{proof}
At the stated scale,
\[
 k\longrightarrow\infty,
 \qquad
 k=o(m).
\]
We separate the two contributions to \eqref{eq:Cmk}.  First, Taylor's
formula gives
\begin{align}
 \frac{m-k}{2}\log\left(1+\frac1k\right)
 &=
 \frac{m-k}{2}
 \left(\frac1k+O\!\left(\frac1{k^2}\right)\right) \notag\\
 &=
 \frac{m}{2k}
 +O\!\left(1+\frac{m}{k^2}\right).
 \label{eq:classical-first-cost}
\end{align}

For the remaining factor, Stirling's formula in the form
\[
 \log r!
 =
 \left(r+\frac12\right)\log r-r
 +\frac12\log(2\pi)+O(r^{-1})
\]
yields, uniformly when $k=o(m)$,
\begin{align*}
 &\log\!\left[
 \frac{m^m}{(m-k)^{m-k}}
 \left(\frac{(m-k)!}{m!}\right)^{1/2}
 \right] \\
 &\qquad=
 \frac12\bigl[m\log m-(m-k)\log(m-k)\bigr]
 +\frac{k}{2}
 +O\!\left(1+\frac{k}{m}\right).
\end{align*}
Since
\[
 m\log m-(m-k)\log(m-k)
 =
 k\log m+k+O\!\left(\frac{k^2}{m}\right),
\]
we obtain
\begin{equation}\label{eq:classical-second-cost}
 \log\!\left[
 \frac{m^m}{(m-k)^{m-k}}
 \left(\frac{(m-k)!}{m!}\right)^{1/2}
 \right]
 =
 \frac{k}{2}\log m+k
 +O\!\left(1+\frac{k^2}{m}\right).
\end{equation}

Combining \eqref{eq:classical-first-cost} and
\eqref{eq:classical-second-cost}, we find
\begin{equation}\label{eq:Cmk-expanded}
 \log C(m,k)
 =
 \frac{m}{2k}
 +\frac{k}{2}\log m
 +k
 +O\!\left(
 1+\frac{m}{k^2}+\frac{k^2}{m}
 \right).
\end{equation}
When $k\asymp\sqrt{m/\log m}$, each of
\[
 k,\qquad
 \frac{m}{k^2},\qquad
 \frac{k^2}{m}
\]
is $o(\sqrt{m\log m})$.  Therefore
\begin{equation}\label{eq:Cmk-leading}
 \log C(m,k)
 =
 \frac{m}{2k}
 +\frac{k}{2}\log m
 +o\!\left(\sqrt{m\log m}\right).
\end{equation}

The multilinear factor is negligible at this scale.  Indeed,
\eqref{eq:BPS-multilinear-polynomial} and the elementary lower bound
$B_k^{\mathrm{mult}}\ge1$ give
\[
 0\le \log B_k^{\mathrm{mult}}
 \le
 \log\kappa+\frac{1-\gamma}{2}\log k
 =
 o\!\left(\sqrt{m\log m}\right).
\]
Thus \eqref{eq:Cmk-leading} remains valid with $C(m,k)$ replaced by
$C(m,k)B_k^{\mathrm{mult}}$.

Finally, if
\[
 k=(t+o(1))\sqrt{\frac{m}{\log m}},
\]
then
\[
 \frac{m}{2k}
 =
 \left(\frac{1}{2t}+o(1)\right)\sqrt{m\log m},
 \qquad
 \frac{k}{2}\log m
 =
 \left(\frac{t}{2}+o(1)\right)\sqrt{m\log m}.
\]
This proves \eqref{eq:classical-critical-asymptotic}.  The final assertion
follows from
\[
 t+\frac1t\ge2,
\]
with equality if and only if $t=1$.
\end{proof}

The preceding calculation exposes the obstruction rather than merely
locating an optimizer.  The contribution $m/(2k)$ comes from the first
factor in \eqref{eq:Cmk} and decreases as the retained degree $k$ grows;
the competing term $(k/2)\log m$ comes from the factorial and polarization
cost and increases with $k$.  Their balance forces
\[
 k\asymp\sqrt{\frac{m}{\log m}}
\]
and, consequently, the scale
\[
 \exp\!\bigl(\Theta(\sqrt{m\log m})\bigr).
\]
Because the multilinear constant contributes only $O(\log k)$ to the
logarithm, improving its polynomial bound cannot alter this leading
balance.  The fixed-ratio argument developed below therefore does not
optimize the classical transition; it replaces it with a different
degree geometry, in which both descendant degrees remain proportional to
$m$.

The bookkeeping behind a fixed-ratio reduction is contained in the next
lemma.  We formulate it for the monotone envelope because no monotonicity
of the original sequence will be assumed.

\begin{lemma}[Fixed-ratio iteration]\label{lem:fixedratio}
Let $(A_m)_{m\ge1}$ be a positive sequence and define
\[
 \mathcal A_m:=\max_{1\le j\le m}A_j.
\]
Suppose that there exist $0<\rho<1$, $a>0$, and $C<\infty$ such that
\begin{equation}\label{eq:abstract-fixed-ratio}
 A_m\le C m^a\mathcal A_{\lceil\rho m\rceil}
\end{equation}
for all sufficiently large $m$.  Then
\begin{equation}\label{eq:abstract-fixed-ratio-conclusion}
 \log\mathcal A_m
 \le
 \frac{a}{2\log(1/\rho)}(\log m)^2
 +O_{\rho,a,C}(\log m).
\end{equation}
The implicit constant may also depend on the finite initial segment of
the sequence preceding the range in which
\eqref{eq:abstract-fixed-ratio} holds.
\end{lemma}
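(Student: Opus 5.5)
The plan is to convert the pointwise recurrence \eqref{eq:abstract-fixed-ratio} into one for the monotone envelope $\mathcal A_m$, and then iterate it along a geometric chain of indices.

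\emph{Step 1: pass to the envelope.} Fix $m_0$ such that \eqref{eq:abstract-fixed-ratio} holds for all $m\ge m_0$, and such that $\lceil\rho m\rceil<m$ for $m\ge m_0$; this is possible since $\rho<1$. For $m\ge m_0$ and any $j\le m$, either $j<m_0$, in which case $A_j\le\mathcal A_{m_0}$, or $j\ge m_0$. In the second case,
\[
 A_j\le Cj^a\mathcal A_{\lceil\rho j\rceil}\le Cm^a\mathcal A_{\lceil\rho m\rceil},
\]
because $j^a\le m^a$ and $\mathcal A$ is nondecreasing. Put $C':=\max(C,1)$. Since $\mathcal A_{m_0}\le\mathcal A_{\lceil\rho m\rceil}$ whenever $\lceil\rho m\rceil\ge m_0$, we obtain
\[
 \mathcal A_m\le C'm^a\,\mathcal A_{\lceil\rho m\rceil}
\]
for all $m$ large enough that $\rho m\ge m_0$. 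Enlarging $m_0$ if necessary, this holds for all $m\ge m_0$.

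\emph{Step 2: iterate.} Define $m_0':=m$ and $m_{i+1}:=\lceil\rho m_i\rceil$, and stop at the first index $N$ with $m_N<m_0$. Applying Step 1 repeatedly gives
\[
 \log\mathcal A_m\le N\log C'+a\sum_{i=0}^{N-1}\log m_i+\log\mathcal A_{m_0}.
\]
The last term depends only on the initial segment, as the statement allows. To control the chain, note that $m_{i+1}\le\rho m_i+1$, so
\[
 m_i\le\rho^i m+\frac1{1-\rho}.
\]
Because $m_i\ge m_0$ for $i<N$, choosing $m_0\ge 2/(1-\rho)$ ensures $m_i\le 2\rho^i m$ for $i<N$. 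It also gives $N\le\log m/\log(1/\rho)+O(1)$: once $\rho^i m<m_0/2$ we have $m_i<m_0$.

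\emph{Step 3: sum.} With $L:=\log m$ and $\lambda:=\log(1/\rho)$,
\[
 \sum_{i<N}\log m_i\le\sum_{i<N}\bigl(L-i\lambda+\log2\bigr)=NL-\lambda\frac{N(N-1)}2+O(N).
\]
The quadratic $NL-\lambda N^2/2$ is maximized at $N=L/\lambda$, where it equals $L^2/(2\lambda)$. Hence the sum is at most $L^2/(2\lambda)+O(L)$. Multiplying by $a$ gives the main term $\frac{a}{2\log(1/\rho)}(\log m)^2$. The remaining contributions, $N\log C'$ and the linear errors, are $O(\log m)$. This proves \eqref{eq:abstract-fixed-ratio-conclusion}.

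The only genuine care needed is the ceiling and the $+1$ drift in Step 2. Absorbing them via $m_i\le2\rho^i m$ costs only $O(N)=O(\log m)$, so it does not affect the leading coefficient. I expect no real obstacle beyond this bookkeeping.
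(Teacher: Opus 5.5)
Your proposal is correct and takes essentially the same route as the paper: pass to the monotone envelope, iterate along the chain $m_{i+1}=\lceil\rho m_i\rceil$ using $m_i\le\rho^i m+1/(1-\rho)$, and sum the resulting arithmetic progression in $\log m_i$. The differences are minor. You absorb the small indices through monotonicity of $\mathcal A$ rather than by enlarging the constant. You also bound $NL-\lambda N^2/2$ by its maximum $L^2/(2\lambda)$, which saves you the two-sided estimate of the number of generations that the paper uses.
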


\begin{proof}
Choose $M_0$ so large that \eqref{eq:abstract-fixed-ratio} holds for
$m>M_0$ and
\[
 \lceil\rho m\rceil<m
 \qquad(m>M_0).
\]
After increasing the constant to absorb the finitely many indices
$j\le M_0$, taking the maximum over $1\le j\le m$ gives
\begin{equation}\label{eq:envelope-fixed-ratio}
 \mathcal A_m
 \le
 C_0m^a\mathcal A_{\lceil\rho m\rceil},
 \qquad m>M_0,
\end{equation}
for some $C_0\ge1$.  Indeed, for $M_0<j\le m$,
\[
 A_j
 \le
 Cj^a\mathcal A_{\lceil\rho j\rceil}
 \le
 Cm^a\mathcal A_{\lceil\rho m\rceil},
\]
whereas the finitely many remaining values are absorbed into $C_0$.

Set
\[
 m_0:=m,
 \qquad
 m_{j+1}:=\lceil\rho m_j\rceil,
\]
and stop at the first index $J$ for which $m_J\le M_0$.  Iterating
\eqref{eq:envelope-fixed-ratio} gives
\begin{equation}\label{eq:fixed-ratio-log-iteration}
 \log\mathcal A_m
 \le
 \log\mathcal A_{m_J}
 +J\log C_0
 +a\sum_{j=0}^{J-1}\log m_j.
\end{equation}

The ceiling introduces only a bounded error.  More precisely, induction
from
\[
 \rho m_j\le m_{j+1}\le\rho m_j+1
\]
gives
\begin{equation}\label{eq:geometric-degree-sequence}
 \rho^jm
 \le
 m_j
 \le
 \rho^jm+\frac{1}{1-\rho}.
\end{equation}
Consequently,
\begin{equation}\label{eq:number-generations}
 J
 =
 \frac{\log m}{\log(1/\rho)}
 +O_\rho(1).
\end{equation}
Moreover, for $j<J$, \eqref{eq:geometric-degree-sequence} implies
\[
 \log m_j
 =
 \log m+j\log\rho+O_\rho(1).
\]
Writing $L:=\log(1/\rho)$ and summing this arithmetic progression, we
obtain
\begin{align}
 \sum_{j=0}^{J-1}\log m_j
 &=
 J\log m
 -L\frac{J(J-1)}2
 +O_\rho(J) \notag\\
 &=
 \frac{(\log m)^2}{2L}
 +O_\rho(\log m).
 \label{eq:fixed-ratio-log-sum}
\end{align}
Since $m_J$ remains in a fixed finite set, the first term on the
right-hand side of \eqref{eq:fixed-ratio-log-iteration} is bounded.
Substituting \eqref{eq:number-generations} and
\eqref{eq:fixed-ratio-log-sum} into
\eqref{eq:fixed-ratio-log-iteration} proves
\eqref{eq:abstract-fixed-ratio-conclusion}.
\end{proof}

The coefficient $1/2$ in
\eqref{eq:abstract-fixed-ratio-conclusion} has a simple geometric origin:
there are $\log m/\log(1/\rho)+O_\rho(1)$ generations, while
$\log m_j$ decreases essentially linearly to a bounded value, so the
accumulated loss is the area of a triangle.  Viewed this way,
\Cref{lem:fixedratio} is a discrete renormalization scheme in the degree:
one passes from $m$ to a fixed fraction $\rho m$, pays a controlled defect,
and sums that defect across geometrically decreasing scales.  This is
formally reminiscent of fixed-scale improvement iterations in nonlinear
elliptic regularity theory, where an oscillation or excess is reduced from one ball
to a smaller concentric ball; compare~\cite{TeixeiraUniversalModuli}.  The
analogy is heuristic---no PDE input is used---but it clarifies why a
polynomial one-step loss accumulates into a quadratic logarithm. Here the
scale variable is the degree rather than a spatial radius and the
substantive task is to obtain the recurrence without losing coefficient
mass.

\section{Fractional two-block factorization}
\label{sec:fractional}

The analytic core of the fixed-ratio argument is a two-block interpolation
principle.  In one block we place the coefficient array at the
Bohnenblust--Hille exponent, while the other block remains Hilbertian; we
then reverse the roles of the two blocks and interpolate between the
resulting mixed norms.  The relevant interpolation weights are not chosen
for convenience.  They are forced by the degree decomposition.

Throughout this section, Haar measure on every torus is normalized to have
total mass one.  For $r\ge1$, set
\[
 q_r:=\frac{2r}{r+1},
 \qquad
 \frac1{q_r}=\frac12+\frac1{2r}.
\]
For a finite scalar family $a=(a_\nu)$, write
\[
 \|a\|_p
 :=
 \left(\sum_\nu |a_\nu|^p\right)^{1/p}.
\]
If $I$ and $J$ are finite index sets, we use the mixed-norm notation
\begin{equation}\label{eq:mixed-norm-definition}
 \|a\|_{\ell_p(I;\ell_q(J))}
 :=
 \left[
 \sum_{i\in I}
 \left(\sum_{j\in J}|a_{ij}|^q\right)^{p/q}
 \right]^{1/p}.
\end{equation}

Let $m=d+e$, with $d,e\ge1$.  Since
\[
 \frac1{q_r}=\frac12+\frac1{2r},
\]
a direct calculation gives the two barycentric identities
\begin{equation}\label{eq:bary}
 \frac1{q_m}
 =
 \frac dm\frac1{q_d}+\frac em\frac12
 =
 \frac dm\frac12+\frac em\frac1{q_e}.
\end{equation}
Thus $q_m$ lies simultaneously between $q_d$ and $2$, and between $2$ and
$q_e$, with weights given exactly by the relative degrees $d/m$ and $e/m$.
These identities are the source of every fractional power appearing below.

\begin{theorem}[Weissler's analytic contraction]
\label{thm:weissler-full}
Let $1\le p\le2$, let $N\ge1$, and let $H$ be an analytic polynomial on
$\mathbb C^N$.  Then
\begin{equation}\label{eq:weissler-full}
 \bigl\|H(\sqrt{p/2}\,\cdot)\bigr\|_{L^2(\mathbb T^N)}
 \le
 \|H\|_{L^p(\mathbb T^N)}.
\end{equation}
\end{theorem}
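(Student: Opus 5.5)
The plan is the classical route: first prove the one-variable inequality, then tensorize it over the coordinates of $\mathbb T^N$ by a Minkowski-type argument.

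\emph{One-variable step.} Write $r:=\sqrt{p/2}\le1$. For a one-variable analytic polynomial $h$ we want
\[
 \|h(r\,\cdot)\|_{L^2(\mathbb T)}\le\|h\|_{L^p(\mathbb T)}.
\]
The efficient route uses an inner--outer factorization. On a dense class (e.g. when $h$ has no zeros on $\mathbb T$), write $h=Ig^{2/p}$, where $I$ is a finite Blaschke product and $g$ is analytic and zero-free with $|g|^{2/p}=|h|$ on $\mathbb T$. Then $|h(rz)|\le|g(rz)|^{2/p}$ and $\|g\|_{L^2}^{2}=\|h\|_{L^p}^p$. So it suffices to prove
\[
 \bigl\||g_r|^{2/p}\bigr\|_{L^2}\le\|g\|_{L^2}^{2/p},
 \qquad\text{that is,}\qquad
 \int|g(r\zeta)|^{4/p}\,dm\le\Bigl(\int|g|^2\,dm\Bigr)^{2/p}.
\]
This is the $L^2\to L^{4/p}$ hypercontractivity of the Poisson--Hardy dilation $g\mapsto g(r\cdot)$ with $r^2=p/2=2/(4/p)$. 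That is exactly the classical Janson/Weissler statement $\|g_r\|_{L^s}\le\|g\|_{L^2}$ for $r\le\sqrt{2/s}$. Its proof is by the Gross-type semigroup argument, or, for even integer $s$, by expanding $g^{s/2}$ and applying Cauchy--Schwarz to the coefficients. For general $s$ one uses subharmonicity of $|g|^{s/2}$ together with the coefficient comparison $\binom{s/2}{k}$-type bounds.

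\emph{Removing the zero-free assumption.} The inner factor $I$ satisfies $|I(rz)|\le1$, so it only helps. The restriction to polynomials without zeros on $\mathbb T$ is removed by density (replace $h$ by $h(\rho\,\cdot)$ and let $\rho\uparrow1$) together with continuity of both sides.

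\emph{Tensorization.} Induct on $N$. Write $H(z',z_N)$ and dilate the last variable first. For fixed $z'\in\mathbb T^{N-1}$, the one-variable step applied to $z_N\mapsto H(z',rz_N)$ gives a bound in the last variable. Dilating the first $N-1$ variables is then handled by the inductive hypothesis, applied inside an $L^2$ norm in $z_N$. The key point is that the $L^2(\mathbb T;\,\cdot\,)$-valued version of the $(N-1)$-step holds: for analytic $\ell^2$-valued polynomials $\|\,\|\vec H(r\cdot)\|_{\ell^2}\|_{L^2}\le\|\,\|\vec H\|_{\ell^2}\|_{L^p}$. This follows because the one-variable inequality for Hilbert-valued analytic polynomials holds by the same factorization argument, using the vector-valued outer function, or alternatively by applying Minkowski's integral inequality, since $p\le2$:
\[
 \Bigl\|\,\|F\|_{L^p(dz')}\Bigr\|_{L^2(dz_N)}\le\Bigl\|\,\|F\|_{L^2(dz_N)}\Bigr\|_{L^p(dz')}.
\]
Concretely, we first dilate in $z_N$ at fixed $z'$ to get the $L^2(dz_N)$ norm bounded by the $L^p(dz_N)$ norm. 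We then swap the order using Minkowski, which is valid because $2\ge p$, and finally dilate in $z'$ within the $L^p\to L^2$ estimate. Making this ordering consistent is where care is needed.

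\emph{Main obstacle.} I expect the delicate part to be the tensorization. Minkowski's inequality alone moves norms the wrong way for a naive iteration, so the argument really needs a Hilbert-space-valued one-variable inequality. I would establish that by working with $|h|$ via the subharmonic majorant $|h|^{p/2}\le|g|$ with $g$ scalar outer, which works equally for vector-valued $h$, since $\log\|h\|$ is subharmonic. The rest then reduces to the scalar hypercontractive estimate.
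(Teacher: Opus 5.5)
Your proof is correct, and its tensorization is the paper's. The paper applies the inductive hypothesis in $z'$ at fixed $z_N$, then the mixed-norm Minkowski inequality (valid because $p\le 2$), then the one-variable estimate in $z_N$ at fixed $z'$. Your ``concrete'' chain performs the same three steps with the roles of $z'$ and $z_N$ exchanged, and it closes. Your final paragraph is therefore mistaken: in neither ordering does Minkowski move the norms the wrong way, and no Hilbert-space-valued one-variable inequality is needed, so that detour should be dropped.

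The real difference is the base case. The paper simply quotes Weissler's sharp analytic estimate with target exponent $2$. You instead factor $h=Ig^{2/p}$ and reduce to the $L^2\to L^{4/p}$ estimate at radius $\sqrt{2/(4/p)}=\sqrt{p/2}$. This is exactly the device by which Weissler passes between exponent pairs with the same ratio, and your reduction (including the density step) is sound. What it buys is a self-contained proof at $p=1$: there $4/p=4$, and $\|g(\cdot/\sqrt2)\|_{L^4}\le\|g\|_{L^2}$ follows by applying Cauchy--Schwarz with binomial weights to the coefficients of $g^2$.

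For $1<p<2$, however, you still need the sharp Janson--Weissler theorem as a black box, and the paper needs these cases: it uses $p=q_d$, so $4/p=2+2/d$ is not an even integer once $d\ge2$. Your ``subharmonicity plus $\binom{s/2}{k}$-type bounds'' is not an argument. Interpolating between $s=2$ and $s=4$ only gives the radius $2^{-\theta/2}$, strictly below the required $\sqrt{1-\theta/2}$ at $s=4/(2-\theta)$. So both routes ultimately rest on the same classical input.
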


\begin{proof}
For $N=1$ and $1\le p<2$, \eqref{eq:weissler-full} is the case $q=2$ of
Weissler's sharp analytic hypercontractive estimate
\cite[Corollary~2.1]{Weissler}.  The case $p=2$ is immediate.

We tensorize the one-variable inequality.  Put
\[
 \rho:=\sqrt{\frac p2}
\]
and argue by induction on $N$.  The assertion has just been established
when $N=1$.  Suppose it holds in $N-1$ variables and write
\[
 z=(z',z_N)\in\mathbb T^{N-1}\times\mathbb T.
\]
For each fixed $z_N$, the induction hypothesis applied to the polynomial
$z'\mapsto H(z',\rho z_N)$ gives
\[
 \|H(\rho z',\rho z_N)\|_{L^2_{z'}}
 \le
 \|H(z',\rho z_N)\|_{L^p_{z'}}.
\]
Taking the $L^2$ norm in $z_N$ yields
\begin{equation}\label{eq:weissler-induction-first}
 \|H(\rho z',\rho z_N)\|_{L^2_{z_N}(L^2_{z'})}
 \le
 \|H(z',\rho z_N)\|_{L^2_{z_N}(L^p_{z'})}.
\end{equation}

Because $p\le2$, Minkowski's integral inequality gives the mixed-norm
embedding
\begin{equation}\label{eq:weissler-minkowski}
 \|G\|_{L^2_{z_N}(L^p_{z'})}
 \le
 \|G\|_{L^p_{z'}(L^2_{z_N})}.
\end{equation}
Applying this to the right-hand side of
\eqref{eq:weissler-induction-first}, and then applying the one-variable
Weissler inequality in the last coordinate for each fixed $z'$, we obtain
\begin{align*}
 \|H(\rho z',\rho z_N)\|_{L^2_{z_N}(L^2_{z'})}
 &\le
 \|H(z',\rho z_N)\|_{L^p_{z'}(L^2_{z_N})}\\
 &\le
 \|H(z',z_N)\|_{L^p_{z'}(L^p_{z_N})}\\
 &=
 \|H\|_{L^p(\mathbb T^N)}.
\end{align*}
This completes the induction.
\end{proof}

\begin{corollary}[Homogeneous hypercontractivity]
\label{lem:hyper}
Let $R$ be an $r$-homogeneous analytic polynomial on $\mathbb C^N$.  If
$1\le p\le2$, then
\begin{equation}\label{eq:homogeneous-hypercontractivity}
 \|R\|_{L^2(\mathbb T^N)}
 \le
 \left(\frac2p\right)^{r/2}
 \|R\|_{L^p(\mathbb T^N)}.
\end{equation}
\end{corollary}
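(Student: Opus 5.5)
The plan is to apply Weissler's analytic contraction (\Cref{thm:weissler-full}) to $H=R$ and then use homogeneity to remove the dilation. Put $\rho:=\sqrt{p/2}\in(0,1]$.

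First, homogeneity. Since $R$ is $r$-homogeneous, $R(\rho z)=\rho^rR(z)$ for every $z\in\mathbb T^N$. Taking $L^2$ norms gives
\[
 \bigl\|R(\rho\,\cdot)\bigr\|_{L^2(\mathbb T^N)}
 =
 \rho^r\|R\|_{L^2(\mathbb T^N)}
 =
 \left(\frac p2\right)^{r/2}\|R\|_{L^2(\mathbb T^N)}.
\]

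Second, the contraction. By \eqref{eq:weissler-full} with $H=R$, the left-hand side above is at most $\|R\|_{L^p(\mathbb T^N)}$. Dividing by $(p/2)^{r/2}>0$ yields \eqref{eq:homogeneous-hypercontractivity}.

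There is no real obstacle, since all the analytic content sits in \Cref{thm:weissler-full}. The only point to check is that the dilation acts on an $r$-homogeneous polynomial as multiplication by exactly $\rho^r$, so that no mixing of degrees occurs. This holds because every monomial $z^\alpha$ of $R$ has $|\alpha|=r$.
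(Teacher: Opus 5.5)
Your proposal is correct and matches the paper's proof: both apply Weissler's contraction \eqref{eq:weissler-full} to $H=R$ and use the homogeneity identity $R(\sqrt{p/2}\,z)=(p/2)^{r/2}R(z)$ to remove the dilation. Your remark that every monomial has $|\alpha|=r$, so no degrees mix, is exactly the point the paper leaves implicit.
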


\begin{proof}
Apply \Cref{thm:weissler-full} and use homogeneity:
\[
 R\!\left(\sqrt{\frac p2}\,z\right)
 =
 \left(\frac p2\right)^{r/2}R(z).
\]
\end{proof}

The endpoint $p=1$ is relevant here, rather than merely formal: it is
needed when one of the two degree blocks has degree one, since $q_1=1$.

\begin{lemma}[Finite mixed-norm interpolation]
\label{lem:mixed-norm-interpolation}
Let $I,J$ be finite index sets, let
\[
 1\le p_0,p_1,q_0,q_1<\infty,
 \qquad
 0\le\vartheta\le1,
\]
and define $p,q$ by
\begin{equation}\label{eq:mixed-interpolation-exponents}
 \frac1p
 =
 \frac{1-\vartheta}{p_0}+\frac{\vartheta}{p_1},
 \qquad
 \frac1q
 =
 \frac{1-\vartheta}{q_0}+\frac{\vartheta}{q_1}.
\end{equation}
Then every scalar array $a=(a_{ij})_{I\times J}$ satisfies
\begin{equation}\label{eq:mixed-norm-interpolation}
 \|a\|_{\ell_p(I;\ell_q(J))}
 \le
 \|a\|_{\ell_{p_0}(I;\ell_{q_0}(J))}^{1-\vartheta}
 \|a\|_{\ell_{p_1}(I;\ell_{q_1}(J))}^{\vartheta}.
\end{equation}
\end{lemma}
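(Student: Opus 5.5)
The plan is a direct proof via two applications of Hölder's inequality, one inside each row and one across rows. Set $b_i:=\|(a_{ij})_j\|_{q}$ and $b_i^{(k)}:=\|(a_{ij})_j\|_{q_k}$ for $k=0,1$. The mixed norm in \eqref{eq:mixed-norm-interpolation} is then $\|(b_i)_i\|_{p}$, and the right-hand side is $\|(b_i^{(0)})\|_{p_0}^{1-\vartheta}\|(b_i^{(1)})\|_{p_1}^{\vartheta}$. The endpoint cases $\vartheta\in\{0,1\}$ are trivial, so I assume $0<\vartheta<1$.

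\emph{Step 1 (inner log-convexity).} For each fixed $i$, I claim
\[
 b_i\le (b_i^{(0)})^{1-\vartheta}(b_i^{(1)})^{\vartheta}.
\]
Write $|a_{ij}|^q=|a_{ij}|^{(1-\vartheta)q}|a_{ij}|^{\vartheta q}$. Apply Hölder's inequality with the conjugate exponents $\frac{q_0}{(1-\vartheta)q}$ and $\frac{q_1}{\vartheta q}$. These are indeed conjugate, since their reciprocals sum to $q\bigl(\frac{1-\vartheta}{q_0}+\frac{\vartheta}{q_1}\bigr)=1$ by \eqref{eq:mixed-interpolation-exponents}. This gives $\sum_j|a_{ij}|^q\le (b_i^{(0)})^{(1-\vartheta)q}(b_i^{(1)})^{\vartheta q}$, and taking $q$-th roots yields the claim.

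\emph{Step 2 (outer Hölder).} Raise the pointwise bound from Step 1 to the power $p$ and sum over $i$:
\[
 \sum_i b_i^p\le \sum_i (b_i^{(0)})^{(1-\vartheta)p}(b_i^{(1)})^{\vartheta p}.
\]
Apply Hölder once more, now with the exponents $\frac{p_0}{(1-\vartheta)p}$ and $\frac{p_1}{\vartheta p}$. These are conjugate by the first identity in \eqref{eq:mixed-interpolation-exponents}. The result is $\|b\|_p^p\le\|b^{(0)}\|_{p_0}^{(1-\vartheta)p}\|b^{(1)}\|_{p_1}^{\vartheta p}$, and taking $p$-th roots gives \eqref{eq:mixed-norm-interpolation}.

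There is no real obstacle, since all sums are finite and all exponents are finite. The only points needing care are checking the two conjugacy relations and noting that the Hölder exponents are at least $1$. The latter holds automatically: each reciprocal lies in $[0,1]$ because the two reciprocals are nonnegative and sum to $1$. An alternative would be to cite complex interpolation of mixed-norm spaces, but the elementary double-Hölder argument is self-contained and gives the constant $1$ directly.
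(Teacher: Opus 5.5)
Your proposal is correct and follows essentially the same route as the paper. The paper applies row-wise log-convexity of the $\ell_q$ norms (which you prove directly by H\"older), raises it to the power $p$, sums over $i$, and then uses H\"older with the same conjugate exponents $p_0/(p(1-\vartheta))$ and $p_1/(p\vartheta)$.
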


\begin{proof}
The cases $\vartheta=0$ and $\vartheta=1$ are immediate, so assume
$0<\vartheta<1$.  For each $i\in I$, log-convexity of finite
$\ell_q$ norms gives
\[
 \|(a_{ij})_{j\in J}\|_q
 \le
 \|(a_{ij})_{j\in J}\|_{q_0}^{1-\vartheta}
 \|(a_{ij})_{j\in J}\|_{q_1}^{\vartheta}.
\]
Raising to the power $p$ and summing over $i$ gives
\[
 \|a\|_{\ell_p(I;\ell_q(J))}^p
 \le
 \sum_{i\in I}
 b_i^{p(1-\vartheta)}c_i^{p\vartheta},
\]
where
\[
 b_i:=\|(a_{ij})_{j\in J}\|_{q_0},
 \qquad
 c_i:=\|(a_{ij})_{j\in J}\|_{q_1}.
\]
The exponents
\[
 r_0:=\frac{p_0}{p(1-\vartheta)},
 \qquad
 r_1:=\frac{p_1}{p\vartheta}
\]
are conjugate, because \eqref{eq:mixed-interpolation-exponents} implies
\[
 \frac1{r_0}+\frac1{r_1}
 =
 \frac{p(1-\vartheta)}{p_0}
 +
 \frac{p\vartheta}{p_1}
 =1.
\]
H\"older's inequality therefore yields
\begin{align*}
 \|a\|_{\ell_p(I;\ell_q(J))}^p
 &\le
 \left(\sum_{i\in I}b_i^{p_0}\right)^{p(1-\vartheta)/p_0}
 \left(\sum_{i\in I}c_i^{p_1}\right)^{p\vartheta/p_1}.
\end{align*}
Taking the $p$th root proves
\eqref{eq:mixed-norm-interpolation}.
\end{proof}

\begin{lemma}[Two-block coefficient inequality]
\label{lem:Blei}
Let $d,e\ge1$, set $m=d+e$, and let
$a=(a_{\beta\gamma})$ be a finite scalar array.  Define
\begin{equation}\label{eq:Xd-definition}
 X_d(a)
 :=
 \left[
 \sum_\beta
 \left(\sum_\gamma|a_{\beta\gamma}|^2\right)^{q_d/2}
 \right]^{1/q_d}
\end{equation}
and
\begin{equation}\label{eq:Ye-definition}
 Y_e(a)
 :=
 \left[
 \sum_\gamma
 \left(\sum_\beta|a_{\beta\gamma}|^2\right)^{q_e/2}
 \right]^{1/q_e}.
\end{equation}
Then
\begin{equation}\label{eq:two-block-Blei}
 \left(\sum_{\beta,\gamma}|a_{\beta\gamma}|^{q_m}\right)^{1/q_m}
 \le
 X_d(a)^{d/m}Y_e(a)^{e/m}.
\end{equation}
\end{lemma}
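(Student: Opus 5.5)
This is an instance of \Cref{lem:mixed-norm-interpolation} with $I$ indexing $\beta$, $J$ indexing $\gamma$, and $\vartheta=e/m$. The endpoints are $X_d(a)=\|a\|_{\ell_{q_d}(I;\ell_2(J))}$ and $Y_e(a)$. The second endpoint is the transposed mixed norm $\ell_{q_e}(J;\ell_2(I))$, so it does not have the form the lemma expects. The first step is therefore to replace $Y_e(a)$ by a quantity in the $(I;J)$ ordering.

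Since $q_e\le 2$, Minkowski's inequality in the form used in \eqref{eq:weissler-minkowski} lets us move the larger exponent outward:
\[
 \|a\|_{\ell_2(I;\ell_{q_e}(J))}
 \le
 \|a\|_{\ell_{q_e}(J;\ell_2(I))}
 =Y_e(a).
\]
Concretely, we apply the triangle inequality in $\ell_{2/q_e}(I)$ to the functions $\beta\mapsto|a_{\beta\gamma}|^{q_e}$ and sum over $\gamma$. This gives both endpoints in the same ordering:
\[
 \|a\|_{\ell_{q_d}(I;\ell_2(J))}=X_d(a),
 \qquad
 \|a\|_{\ell_2(I;\ell_{q_e}(J))}\le Y_e(a).
\]

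Next we apply \Cref{lem:mixed-norm-interpolation} with
\[
 (p_0,q_0)=(q_d,2),\qquad (p_1,q_1)=(2,q_e),\qquad \vartheta=e/m,\qquad 1-\vartheta=d/m.
\]
By the barycentric identities \eqref{eq:bary},
\[
 \frac dm\frac1{q_d}+\frac em\frac12=\frac1{q_m}
 \quad\text{and}\quad
 \frac dm\frac12+\frac em\frac1{q_e}=\frac1{q_m},
\]
so both interpolated exponents $p$ and $q$ equal $q_m$. The interpolated norm $\ell_{q_m}(I;\ell_{q_m}(J))$ is then exactly the left side of \eqref{eq:two-block-Blei}. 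The lemma yields
\[
 \Bigl(\sum_{\beta,\gamma}|a_{\beta\gamma}|^{q_m}\Bigr)^{1/q_m}
 \le
 X_d(a)^{d/m}\,\|a\|_{\ell_2(I;\ell_{q_e}(J))}^{e/m},
\]
and inserting the Minkowski bound gives \eqref{eq:two-block-Blei}.

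The only step requiring care is the first: choosing the direction of the Minkowski swap so that the transposed norm $Y_e(a)$ bounds the norm we actually need. This works because $q_e\le2$. The hypotheses of \Cref{lem:mixed-norm-interpolation} hold since all exponents are finite and at least $1$; note $q_1=1$ is allowed when $d=1$ or $e=1$.
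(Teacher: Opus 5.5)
Your proposal is correct and follows the paper's route. The paper also applies \Cref{lem:mixed-norm-interpolation} with $(p_0,q_0)=(q_d,2)$, $(p_1,q_1)=(2,q_e)$, $\vartheta=e/m$, uses \eqref{eq:bary} to identify both interpolated exponents with $q_m$, and bounds $\|a\|_{\ell_2(\beta;\ell_{q_e}(\gamma))}$ by $Y_e(a)$ through Minkowski's inequality with $q_e\le 2$; the only difference is that you perform the Minkowski step before the interpolation rather than after.
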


\begin{proof}
Apply \Cref{lem:mixed-norm-interpolation} with
\[
 (p_0,q_0)=(q_d,2),
 \qquad
 (p_1,q_1)=(2,q_e),
 \qquad
 \vartheta=\frac em.
\]
By the two identities in \eqref{eq:bary}, both interpolated exponents are
equal to $q_m$.  Hence
\begin{align}
 \|a\|_{\ell_{q_m}(\beta;\ell_{q_m}(\gamma))}
 &\le
 \|a\|_{\ell_{q_d}(\beta;\ell_2(\gamma))}^{d/m}
 \|a\|_{\ell_2(\beta;\ell_{q_e}(\gamma))}^{e/m}.
 \label{eq:two-block-before-Minkowski}
\end{align}
The first norm on the right is exactly $X_d(a)$.  Since $q_e\le2$,
Minkowski's inequality gives
\begin{align}
 \|a\|_{\ell_2(\beta;\ell_{q_e}(\gamma))}
 &=
 \left[
 \sum_\beta
 \left(\sum_\gamma|a_{\beta\gamma}|^{q_e}\right)^{2/q_e}
 \right]^{1/2}
 \notag\\
 &\le
 \left[
 \sum_\gamma
 \left(\sum_\beta|a_{\beta\gamma}|^2\right)^{q_e/2}
 \right]^{1/q_e}
 =
 Y_e(a).
 \label{eq:two-block-Minkowski}
\end{align}
Finally,
\[
 \|a\|_{\ell_{q_m}(\beta;\ell_{q_m}(\gamma))}
 =
 \left(\sum_{\beta,\gamma}|a_{\beta\gamma}|^{q_m}\right)^{1/q_m},
\]
because the inner and outer exponents coincide.  Combining
\eqref{eq:two-block-before-Minkowski} and
\eqref{eq:two-block-Minkowski} proves \eqref{eq:two-block-Blei}.
\end{proof}

The preceding estimate is asymmetric at each endpoint: $X_d$ treats the
$\beta$ block at its Bohnenblust--Hille exponent and the $\gamma$ block in
$\ell_2$, while $Y_e$ does the reverse.  The interpolation weights in
\eqref{eq:two-block-Blei} restore the symmetry and recover exactly the
critical exponent $q_m$ of the full coefficient array.

\begin{theorem}[Bihomogeneous reduction]
\label{thm:bihom}
Let $d,e\ge1$, set $m=d+e$, and let
\[
 Q(x,y)
 =
 \sum_{\substack{|\beta|=d\\|\gamma|=e}}
 a_{\beta\gamma}x^\beta y^\gamma
\]
be bihomogeneous on
$\mathbb C^{n_x}\times\mathbb C^{n_y}$.  Then
\begin{align}
 \|a(Q)\|_{q_m}
 &\le
 \left(1+\frac1d\right)^{de/(2m)}
 \left(1+\frac1e\right)^{de/(2m)}
 D_d^{d/m}D_e^{e/m}\|Q\|_\infty
 \label{eq:bihom-sharp}\\
 &\le
 \exp(1/2)
 D_d^{d/m}D_e^{e/m}\|Q\|_\infty.
 \label{eq:bihom-simple}
\end{align}
\end{theorem}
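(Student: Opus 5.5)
The plan is to apply \Cref{lem:Blei} to the coefficient array $a=(a_{\beta\gamma})$ of $Q$, so that
\[
\|a(Q)\|_{q_m}\le X_d(a)^{d/m}Y_e(a)^{e/m},
\]
and then bound $X_d(a)$ and $Y_e(a)$ separately by freezing one block of variables on the torus. By symmetry it suffices to treat $X_d(a)$.

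For $X_d(a)$, fix $y\in\mathbb T^{n_y}$ and write
\[
Q(x,y)=\sum_\beta c_\beta(y)x^\beta,\qquad c_\beta(y):=\sum_\gamma a_{\beta\gamma}y^\gamma .
\]
Each $c_\beta$ is an $e$-homogeneous analytic polynomial in $y$, and Parseval gives $\bigl(\sum_\gamma|a_{\beta\gamma}|^2\bigr)^{1/2}=\|c_\beta\|_{L^2_y}$. Hence $X_d(a)=\|c\|_{\ell_{q_d}(\beta;L^2_y)}$.

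The expected obstacle is that Minkowski's inequality runs the wrong way here. Since $q_d\le2$, moving the $L^2_y$ norm outside would make the quantity smaller, not larger. The remedy is to lower the inner exponent first. Apply \Cref{lem:hyper} to each $c_\beta$ with $p=q_d$ and $r=e$:
\[
\|c_\beta\|_{L^2_y}\le\Bigl(\tfrac{2}{q_d}\Bigr)^{e/2}\|c_\beta\|_{L^{q_d}_y}=\Bigl(1+\tfrac1d\Bigr)^{e/2}\|c_\beta\|_{L^{q_d}_y}.
\]
Here the endpoint $p=q_1=1$ is allowed when $d=1$. Now the inner and outer exponents agree, so Fubini lets us swap them:
\[
\|c\|_{\ell_{q_d}(L^{q_d}_y)}=\Bigl\|\bigl(\textstyle\sum_\beta|c_\beta(y)|^{q_d}\bigr)^{1/q_d}\Bigr\|_{L^{q_d}_y}.
\]
For each fixed $y$, the definition of $D_d$ (with $D_{d,n_x}\le D_d$) bounds the inner quantity by $D_d\sup_{x}|Q(x,y)|\le D_d\|Q\|_\infty$, using the maximum principle to pass from $\mathbb T$ to $\mathbb D$. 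Since Haar measure is a probability measure, this gives
\[
X_d(a)\le\Bigl(1+\tfrac1d\Bigr)^{e/2}D_d\|Q\|_\infty .
\]

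The symmetric argument, freezing $x$ and applying hypercontractivity in $x$ with $p=q_e$ and $r=d$, gives
\[
Y_e(a)\le\Bigl(1+\tfrac1e\Bigr)^{d/2}D_e\|Q\|_\infty .
\]
Raising the bound on $X_d(a)$ to the power $d/m$ and the bound on $Y_e(a)$ to the power $e/m$ produces exactly the factors $(1+1/d)^{de/(2m)}(1+1/e)^{de/(2m)}$, which proves \eqref{eq:bihom-sharp}. For \eqref{eq:bihom-simple}, use $\log(1+1/t)\le 1/t$; the logarithm of the prefactor is then at most
\[
\frac{de}{2m}\Bigl(\frac1d+\frac1e\Bigr)=\frac{e+d}{2m}=\frac12 .
\]
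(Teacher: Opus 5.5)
Your proposal is correct and follows the paper's proof essentially step for step: \Cref{lem:Blei} applied to the array, then \Cref{lem:hyper} with $p=q_d$ on each row polynomial (your $c_\beta$, the paper's $A_\beta$) to lower the inner exponent to $q_d$, then Tonelli to interchange, then the pointwise degree-$d$ inequality for each fixed $y\in\mathbb T^{n_y}$, then the symmetric bound for $Y_e(a)$, and finally $\log(1+s)\le s$ for the uniform constant. Your observation that Minkowski runs the wrong way, so that hypercontractivity must be applied before swapping the norms, is exactly why the paper carries out the steps in this order.
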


\begin{proof}
For each multi-index $\beta$ with $|\beta|=d$, define the
$e$-homogeneous polynomial
\[
 A_\beta(y)
 :=
 \sum_{|\gamma|=e}a_{\beta\gamma}y^\gamma.
\]
Parseval's identity gives
\[
 \left(\sum_\gamma|a_{\beta\gamma}|^2\right)^{1/2}
 =
 \|A_\beta\|_{L^2(\mathbb T^{n_y})}.
\]
Apply \Cref{lem:hyper} to $A_\beta$ with $p=q_d$.  Since
\[
 \frac2{q_d}=1+\frac1d,
\]
we obtain
\begin{equation}\label{eq:row-hypercontractive}
 \left(\sum_\gamma|a_{\beta\gamma}|^2\right)^{1/2}
 \le
 \left(1+\frac1d\right)^{e/2}
 \|A_\beta\|_{L^{q_d}(\mathbb T^{n_y})}.
\end{equation}
Raise \eqref{eq:row-hypercontractive} to the power $q_d$, sum in
$\beta$, and use Tonelli's theorem:
\begin{align}
 X_d(a)^{q_d}
 &\le
 \left(1+\frac1d\right)^{eq_d/2}
 \int_{\mathbb T^{n_y}}
 \sum_{|\beta|=d}|A_\beta(y)|^{q_d}\,dy.
 \label{eq:Xd-integrated}
\end{align}

For each fixed $y\in\mathbb T^{n_y}$, the polynomial
\[
 x\longmapsto Q(x,y)
 =
 \sum_{|\beta|=d}A_\beta(y)x^\beta
\]
is $d$-homogeneous.  The degree-$d$ polynomial Bohnenblust--Hille
inequality therefore gives
\[
 \left(\sum_{|\beta|=d}|A_\beta(y)|^{q_d}\right)^{1/q_d}
 \le
 D_d\sup_{x\in\mathbb D^{n_x}}|Q(x,y)|
 \le
 D_d\|Q\|_\infty.
\]
Substituting this pointwise estimate into \eqref{eq:Xd-integrated} and
using the normalization of Haar measure yields
\begin{equation}\label{eq:Xd-bound}
 X_d(a)
 \le
 \left(1+\frac1d\right)^{e/2}
 D_d\|Q\|_\infty.
\end{equation}

Interchanging the two variable blocks gives, in exactly the same way,
\begin{equation}\label{eq:Ye-bound}
 Y_e(a)
 \le
 \left(1+\frac1e\right)^{d/2}
 D_e\|Q\|_\infty.
\end{equation}
Combining \Cref{lem:Blei} with
\eqref{eq:Xd-bound} and \eqref{eq:Ye-bound}, we find
\begin{align*}
 \|a(Q)\|_{q_m}
 &\le
 \left[
 \left(1+\frac1d\right)^{e/2}
 D_d\|Q\|_\infty
 \right]^{d/m}\\
 &\qquad\times
 \left[
 \left(1+\frac1e\right)^{d/2}
 D_e\|Q\|_\infty
 \right]^{e/m}\\
 &=
 \left(1+\frac1d\right)^{de/(2m)}
 \left(1+\frac1e\right)^{de/(2m)}
 D_d^{d/m}D_e^{e/m}\|Q\|_\infty,
\end{align*}
which proves \eqref{eq:bihom-sharp}.

To obtain the uniform form, use $\log(1+s)\le s$ for $s\ge0$:
\begin{align*}
 &\log\left[
 \left(1+\frac1d\right)^{de/(2m)}
 \left(1+\frac1e\right)^{de/(2m)}
 \right]\\
 &\qquad=
 \frac{de}{2m}
 \left[
 \log\left(1+\frac1d\right)
 +
 \log\left(1+\frac1e\right)
 \right]\\
 &\qquad\le
 \frac{de}{2m}\left(\frac1d+\frac1e\right)
 =
 \frac12.
\end{align*}
Exponentiating proves \eqref{eq:bihom-simple}.
\end{proof}

The decisive feature of \Cref{thm:bihom} is not the numerical factor
$\exp(1/2)$, but the way the lower-degree constants enter:
\[
 D_d^{d/m}D_e^{e/m}.
\]
The exponents are exactly the proportions of the two degree blocks.  No
polarization estimate, coefficient counting, or dimension-dependent
constant appears.  This fractional geometric mean is what makes a
fixed-ratio reduction possible: when both $d$ and $e$ remain below a fixed
fraction of $m$, the full coefficient norm is controlled by constants from
genuinely lower degrees without paying for either block in full.  The same
degree geometry will later reappear in the weighted bootstrap, where the
powers $d/m$ and $e/m$ produce the binary-entropy contraction.

\section{A fixed-ratio precursor}
\label{sec:hybrid}

The two-block estimate becomes effective only after the coefficient array
has been distributed among bidegrees that remain uniformly separated from
$0$ and $m$.  A coloring of whole coordinates by $x$ and $y$ achieves this
for diffuse monomials, but it can fail completely when one coordinate
carries a large fraction of the degree.  We therefore allow a third,
rare substitution: a coordinate may be replaced by $(x_j+y_j)/2$.

The probability of this third choice is calibrated to the degree.  An
$m$-homogeneous monomial has at most $m$ active coordinates, so choosing
the splitting probability to be $1/m$ leaves all active coordinates
unsplit with probability bounded away from zero.  At the same time, a
prescribed dominant coordinate is split with probability of order $1/m$.
This balance is responsible for the polynomial recovery cost obtained
below.

Fix $m\ge2$ and an $m$-homogeneous polynomial
\[
 P(z)=\sum_{|\alpha|=m}a_\alpha z^\alpha
\]
on $\mathbb C^n$.  Put
\[
 \vartheta:=\frac1m.
\]
Independently for each coordinate $j$, make the substitution
\begin{equation}\label{eq:subs}
 z_j\longmapsto
 \begin{cases}
 x_j,&\text{with probability }(1-\vartheta)/2,\\[2mm]
 y_j,&\text{with probability }(1-\vartheta)/2,\\[2mm]
 (x_j+y_j)/2,&\text{with probability }\vartheta.
 \end{cases}
\end{equation}
Let $\Phi_\omega$ denote the resulting random map and set
\[
 Q_\omega(x,y):=P(\Phi_\omega(x,y)).
\]
Since every substitution in \eqref{eq:subs} is linear, $Q_\omega$ is
$m$-homogeneous in the joint variables $(x,y)$.  We write
$Q_{\omega,d}$ for its component of degree $d$ in $x$ and degree $m-d$
in $y$.

For a fixed outcome $\omega$, denote by
\[
 X_\omega,\qquad Y_\omega,\qquad S_\omega
\]
the sets of coordinates sent to $x_j$, sent to $y_j$, and split,
respectively.  If $|\alpha|=m$, then
\[
 z^\alpha\circ\Phi_\omega
 =
 \sum_{\beta+\gamma=\alpha}
 c^\omega_{\alpha,\beta}x^\beta y^\gamma,
\]
where
\begin{equation}\label{eq:child-coefficient}
 c^\omega_{\alpha,\beta}
 =
 2^{-\sum_{j\in S_\omega}\alpha_j}
 \prod_{j\in S_\omega}\binom{\alpha_j}{\beta_j},
\end{equation}
provided that
\[
 \beta_j=\alpha_j \quad(j\in X_\omega),
 \qquad
 \beta_j=0 \quad(j\in Y_\omega),
 \qquad
 0\le\beta_j\le\alpha_j \quad(j\in S_\omega);
\]
outside these conditions we set
$c^\omega_{\alpha,\beta}=0$.  Thus the unsplit coordinates are assigned
entirely to one block, while the split coordinates retain their exact
binomial expansion.

\begin{lemma}[Contractivity and exact ancestry]
\label{lem:recovery}
For every outcome $\omega$ and every $0\le d\le m$,
\begin{equation}\label{eq:component-contractivity}
 \|Q_{\omega,d}\|_\infty\le\|P\|_\infty.
\end{equation}
Moreover, for every $I\subset\{0,\ldots,m\}$,
\begin{equation}\label{eq:exactmass}
 \sum_{d\in I}\|a(Q_{\omega,d})\|_{q_m}^{q_m}
 =
 \sum_{|\alpha|=m}|a_\alpha|^{q_m}
 \sum_{\substack{\beta+\gamma=\alpha\\|\beta|\in I}}
 |c^\omega_{\alpha,\beta}|^{q_m}.
\end{equation}
\end{lemma}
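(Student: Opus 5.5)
The plan is to treat the two assertions separately. Contractivity will come from a Cauchy projection onto bidegrees. Exact ancestry will come from the observation that distinct parents produce disjoint sets of children.

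\emph{Contractivity.} Fix $\omega$ and $(x,y)\in\mathbb D^{n}\times\mathbb D^{n}$. The map $\Phi_\omega$ sends the closed polydisc into itself, since each coordinate is $x_j$, $y_j$, or $(x_j+y_j)/2$, and $|(x_j+y_j)/2|\le1$. Hence
\[
 |Q_\omega(x,y)|\le\|P\|_\infty .
\]
To extract the $d$-th bidegree component, introduce a circle parameter $\lambda\in\mathbb T$ and consider $Q_\omega(\lambda x,y)$. Because $Q_\omega$ is $m$-homogeneous in $(x,y)$,
\[
 Q_\omega(\lambda x,y)=\sum_{d=0}^m\lambda^dQ_{\omega,d}(x,y),
\]
and so
\[
 Q_{\omega,d}(x,y)=\int_{\mathbb T}Q_\omega(\lambda x,y)\,\bar\lambda^{\,d}\,dm(\lambda).
\]
The point $(\lambda x,y)$ still lies in the polydisc, so $|Q_{\omega,d}(x,y)|\le\|P\|_\infty$. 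This proves \eqref{eq:component-contractivity}.

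\emph{Exact ancestry.} Expand
\[
 Q_\omega=\sum_\alpha a_\alpha\, z^\alpha\circ\Phi_\omega
 =\sum_\alpha a_\alpha\sum_{\beta+\gamma=\alpha}c^\omega_{\alpha,\beta}\,x^\beta y^\gamma .
\]
Each child monomial $x^\beta y^\gamma$ determines its parent uniquely, via $\alpha=\beta+\gamma$. So the coefficient of $x^\beta y^\gamma$ in $Q_\omega$ is exactly $a_{\beta+\gamma}\,c^\omega_{\beta+\gamma,\beta}$, with no summation over distinct parents and hence no cancellation. I need to justify this expansion using \eqref{eq:child-coefficient}:
\begin{itemize}
 \item coordinates in $X_\omega$ contribute $x_j^{\alpha_j}$;
 \item coordinates in $Y_\omega$ contribute $y_j^{\alpha_j}$;
 \item coordinates in $S_\omega$ contribute the binomial expansion of $2^{-\alpha_j}(x_j+y_j)^{\alpha_j}$.
\end{itemize}
The product over $j$ gives precisely the stated coefficients, and these vanish outside the admissibility conditions. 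The bidegree-$(d,m-d)$ component collects exactly the pairs with $|\beta|=d$. Therefore
\[
 \|a(Q_{\omega,d})\|_{q_m}^{q_m}=\sum_{|\beta|=d,\ \gamma}\bigl|a_{\beta+\gamma}\bigr|^{q_m}\,\bigl|c^\omega_{\beta+\gamma,\beta}\bigr|^{q_m}.
\]
Summing over $d\in I$ and reindexing by $\alpha=\beta+\gamma$ gives \eqref{eq:exactmass}.

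The only point needing care is the injectivity of $(\alpha,\beta)\mapsto(\beta,\alpha-\beta)$. This holds because $x$ and $y$ are distinct variable blocks indexed by the same coordinate set, so the reindexing is a bijection between pairs $(\alpha,\beta)$ and children $(\beta,\gamma)$. Nothing deeper is needed; I expect no real obstacle beyond this bookkeeping.
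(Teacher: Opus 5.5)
Your proposal is correct and follows the paper's own proof: contractivity via the circle average in the $x$-variables (an average of isometries applied after $\Phi_\omega$ maps the polydisc into itself), and the identity \eqref{eq:exactmass} from the fact that each child $x^\beta y^\gamma$ has the unique parent $\alpha=\beta+\gamma$, so no distinct parent coefficients are summed before absolute values are taken. Your explicit remark that $(\alpha,\beta)\mapsto(\beta,\alpha-\beta)$ is a bijection is precisely the bookkeeping the paper leaves implicit.
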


\begin{proof}
If $(x,y)\in\mathbb D^n\times\mathbb D^n$, then every coordinate of
$\Phi_\omega(x,y)$ belongs to $\mathbb D$: this is immediate for the
unsplit substitutions, while
\[
 \left|\frac{x_j+y_j}{2}\right|
 \le\frac{|x_j|+|y_j|}{2}\le1
\]
for a split coordinate.  Hence
\[
 \|Q_\omega\|_\infty\le\|P\|_\infty.
\]
Projection onto the component of $x$-degree $d$ is given by
\begin{equation}\label{eq:x-degree-projection}
 Q_{\omega,d}(x,y)
 =
 \frac1{2\pi}
 \int_0^{2\pi}
 Q_\omega(e^{it}x,y)e^{-idt}\,dt.
\end{equation}
It is therefore an average of isometries, which proves
\eqref{eq:component-contractivity}.

For the coefficient identity, fix a child pair $(\beta,\gamma)$.  Its
parent is necessarily
\[
 \alpha=\beta+\gamma.
\]
Thus the coefficient of $x^\beta y^\gamma$ in $Q_\omega$ is exactly
\begin{equation}\label{eq:exact-child-coefficient}
 a_{\beta+\gamma}
 c^\omega_{\beta+\gamma,\beta};
\end{equation}
there is no sum over distinct parent coefficients.  In particular, no
cancellation or collision occurs before absolute values are taken.
Summing the $q_m$-powers of
\eqref{eq:exact-child-coefficient} over the bidegrees with
$|\beta|\in I$ gives \eqref{eq:exactmass}.
\end{proof}

For $0<\varepsilon<1/2$, define the central window
\begin{equation}\label{eq:central-window}
 I_{m,\varepsilon}
 :=
 \left\{
 d\in\{0,\ldots,m\}:
 \varepsilon m\le d\le(1-\varepsilon)m
 \right\}.
\end{equation}
The next elementary observation will be used for the binomial mass
created by a split dominant coordinate.  By a central integer of
$\{0,\ldots,a\}$ we mean either $\lfloor a/2\rfloor$ or
$\lceil a/2\rceil$.

\begin{lemma}[Central half-mass principle]
\label{lem:half-mass}
Let $(w_b)_{b=0}^a$ be a nonnegative sequence that is symmetric about
$a/2$ and nondecreasing up to the midpoint.  If $K$ is an interval of
consecutive integers in $\{0,\ldots,a\}$ that contains a central integer
and satisfies
\[
 |K|\ge\left\lceil\frac{a+1}{2}\right\rceil,
\]
then
\begin{equation}\label{eq:half-mass}
 \sum_{b\in K}w_b
 \ge
 \frac12\sum_{b=0}^a w_b.
\end{equation}
\end{lemma}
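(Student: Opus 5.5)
The plan is a short combinatorial argument using only symmetry and unimodality of $(w_b)$, with no analytic input. Write $S:=\sum_{b=0}^a w_b$ and $h:=\lceil (a+1)/2\rceil$. Since $K$ is an interval, I will first shrink it: any subinterval of $K$ carries at most the mass of $K$, so it suffices to find a subinterval $K'\subset K$ of length exactly $h$ that contains a central integer and satisfies $\sum_{b\in K'}w_b\ge S/2$. Because $K$ contains a central integer $c$ and $|K|\ge h$, such a $K'$ exists: starting from $\{c\}$, extend within $K$ until the length is $h$.

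The main step treats intervals of length exactly $h$ containing a central integer, and I will use a reflection pairing. Let $\sigma(b):=a-b$. Symmetry gives $w_{\sigma(b)}=w_b$, and $\sigma$ maps an interval $K'=\{s,\dots,s+h-1\}$ to $\{a-s-h+1,\dots,a-s\}$. The claim is that $K'\cup\sigma(K')=\{0,\dots,a\}$. Granting this, subadditivity gives $S\le \sum_{K'}w+\sum_{\sigma(K')}w=2\sum_{K'}w$, which is \eqref{eq:half-mass}.

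To prove the covering claim, I will split on parity.
\begin{itemize}
\item If $a=2k$, then $h=k+1$ and the central integer is $k$. The condition $k\in K'$ means $s\le k\le s+k$, that is, $0\le s\le k$. Then $K'\cup\sigma(K')=[s,s+k]\cup[k-s,2k-s]$. These two intervals overlap because $k-s\le s+k$, and together they cover $[\min(s,k-s),\max(s+k,2k-s)]$. This range need not reach $0$ and $2k$, so the covering claim fails here, and unimodality must enter.
\end{itemize}

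The corrected argument is as follows. Order the indices by decreasing weight. By symmetry and monotonicity up to the midpoint, $w_b$ is a nonincreasing function of $|b-a/2|$. An interval of length $h$ containing a central integer contains, for each $t$, at least as many indices with $|b-a/2|\le t$ as a typical half of the points. The precise way to say this is a rearrangement comparison. Let $K^*$ be the set of $h$ indices closest to $a/2$; this is a centered interval, with ties broken arbitrarily. I will show that $\sum_{K'}w\ge\sum_{K'\cap\sigma(K')}w+\tfrac12\sum_{K'\triangle\sigma(K')}w$ and then compare directly.

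A cleaner route, which I would actually follow, is a direct pairing. Every $b\notin K'$ lies on one side of $K'$, say $b>s+h-1$. The map $b\mapsto b-h$ is injective from the right complement into $K'$ provided $s\ge0$, and because $K'$ contains a central point, $b-h$ is at least as close to $a/2$ as $b$. Hence $w_{b-h}\ge w_b$. The left complement is handled symmetrically with $b\mapsto b+h$. The images of the two complements are disjoint, since their total size is at most $a+1-h\le h$ and they fill $K'$ from opposite ends. Summing gives $\sum_{b\notin K'}w_b\le\sum_{b\in K'}w_b$, which is \eqref{eq:half-mass}.

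The main obstacle is verifying the two geometric facts in this pairing. The first is that $|b-h-a/2|\le|b-a/2|$, which uses $s+h-1\ge\lfloor a/2\rfloor$ together with $h\approx a/2$. The second is disjointness of the images, which uses $h\ge (a+1)/2$. Both checks require some care with the parity of $a$.
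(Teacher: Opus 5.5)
Your proposal has a genuine gap in the pairing step, which is the route you say you would actually follow. The termwise inequality $w_{b-h}\ge w_b$ for $b$ in the right complement (and $w_{b+h}\ge w_b$ on the left) is false in general.

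Since $w_b$ is a nonincreasing function of $|b-a/2|$, you need $|b-h-a/2|\le|b-a/2|$. For $h>0$ this is equivalent to $b\ge(a+h)/2\approx 3a/4$. Containing a central integer only gives $s+h-1\ge\lfloor a/2\rfloor$. So the smallest right-complement index $b=s+h$ can be as small as $\lfloor a/2\rfloor+1$, and translating it by $h\approx a/2$ overshoots the midpoint.

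Concretely, take $a=4$, $h=3$, $K'=\{0,1,2\}$ (which contains the central integer $2$) and $w=(0,1,1,1,0)$. Your map sends $b=3$ to $0$, but $w_0=0<1=w_3$. Your disjointness and containment checks are correct. The summed inequality $\sum_{b\notin K'}w_b\le\sum_{b\in K'}w_b$ is also true, but it does not follow from a termwise translation comparison.

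The missing idea is the sliding-window observation used in the paper. Moving an interval of fixed length one step toward the midpoint replaces an outer weight by an inner one, so it cannot decrease the mass. Hence, among all intervals of length $h$, the mass is minimized at the extreme intervals $L_h=\{0,\ldots,h-1\}$ and $R_h=\{a-h+1,\ldots,a\}$.

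For these extreme intervals your first (reflection) idea works verbatim. We have $\sigma(L_h)=R_h$, and $L_h\cup R_h=\{0,\ldots,a\}$ because $2h\ge a+1$. So $2\sum_{b\in L_h}w_b\ge\sum_{b=0}^a w_b$, and every interval of length $h$ carries at least this mass. This also shows the central-integer hypothesis is never needed, since every interval of length at least $h$ contains one automatically.

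Your pairing can be repaired in the same spirit:
\begin{enumerate}
\item Reflect the right complement $\{s+h,\ldots,a\}$ onto the leftmost interval $\{0,\ldots,a-s-h\}$, which has equal mass.
\item Compare that interval with the translate $\{s,\ldots,a-h\}$.
\item Handle the left complement symmetrically.
\end{enumerate}
However, the comparison in the second step is exactly the sliding-window lemma. So the paper's two-line route is the natural one.
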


\begin{proof}
Put $k:=|K|$.  Among all intervals of $k$ consecutive indices, the mass
of a symmetric unimodal sequence is minimized at one of the two extreme
intervals
\[
 L_k:=\{0,\ldots,k-1\},
 \qquad
 R_k:=\{a-k+1,\ldots,a\}.
\]
Indeed, shifting an interval one step toward the midpoint replaces an
outer weight by an inner weight and cannot decrease its mass.

By symmetry,
\[
 \sum_{b\in L_k}w_b=\sum_{b\in R_k}w_b.
\]
Since $2k\ge a+1$, the union $L_k\cup R_k$ contains every index in
$\{0,\ldots,a\}$.  Consequently,
\[
 2\sum_{b\in L_k}w_b
 =
 \sum_{b\in L_k}w_b+\sum_{b\in R_k}w_b
 \ge
 \sum_{b=0}^a w_b.
\]
Every admissible interval has mass at least that of $L_k$, which proves
\eqref{eq:half-mass}.
\end{proof}

\begin{theorem}[Uniform central capture]
\label{thm:capture}
Let
\begin{equation}\label{eq:epsilon-star}
 \varepsilon_*:=\frac{3-\sqrt5}{4}.
\end{equation}
For every $0<\varepsilon<\varepsilon_*$ there exist
$c_\varepsilon>0$ and $m_\varepsilon\in\mathbb N$ such that, for every
$m\ge m_\varepsilon$, every $n$, and every
$\alpha\in\mathbb N_0^n$ with $|\alpha|=m$,
\begin{equation}\label{eq:capture}
 \mathbb E_\omega
 \sum_{\substack{\beta+\gamma=\alpha\\
                 |\beta|\in I_{m,\varepsilon}}}
 |c^\omega_{\alpha,\beta}|^{q_m}
 \ge
 c_\varepsilon m^{-3/2}.
\end{equation}
The constants are independent of the ambient dimension and of the
distribution of the degree among the coordinates.
\end{theorem}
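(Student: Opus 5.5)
Fix $\alpha$ with $|\alpha|=m$ and let $a_1\ge a_2\ge\cdots$ be its nonzero entries, so there are at most $m$ active coordinates. The plan is to condition on the event that at most one active coordinate is split, and to split into two cases according to the size of the largest entry $a_1$ relative to a threshold $\lambda m$. The threshold $\lambda$ is tied to $\varepsilon$; the value $\varepsilon_*=(3-\sqrt5)/4$ should emerge as the point where the two cases overlap. For any outcome in which no coordinate is split, $c^\omega_{\alpha,\beta}\in\{0,1\}$, and there is exactly one child, namely $\beta=\alpha|_{X_\omega}$. That child contributes $1$ exactly when $|\beta|=\sum_{j\in X_\omega}a_j$ lies in $I_{m,\varepsilon}$.

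\emph{Diffuse case ($a_1\le\lambda m$).} Condition on $S_\omega\cap\operatorname{supp}\alpha=\emptyset$. This event has probability at least $(1-1/m)^m\ge 1/4$ for $m\ge2$. Given it, each active coordinate goes to $x$ or $y$ with probability $1/2$ each, independently. We must then show that $\Pr(\sum_j\xi_ja_j\in[\varepsilon m,(1-\varepsilon)m])$, with $\xi_j$ i.i.d.\ Bernoulli$(1/2)$, is bounded below by an absolute constant. This is a Littlewood--Offord/anticoncentration-type statement. The mean is $m/2$ and the variance is $\tfrac14\sum a_j^2\le \tfrac14 a_1 m$, so Chebyshev handles $a_1$ small. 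Near the threshold, use a direct combinatorial argument instead. Place the top coordinate first; the remaining mass $m-a_1$ is split symmetrically, and by symmetry with probability at least $1/2$ the $x$-share of the rest has the right side relative to the median. A greedy or ordering argument then shows that some assignment pattern of positive probability (at least $2^{-O(1)}$, since only boundedly many large coordinates matter) lands in the window. Here we need $\lambda\le 1-\varepsilon$ and the large coordinates controlled. This case gives a constant lower bound, which is better than $m^{-3/2}$.

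\emph{Dominant case ($a_1>\lambda m$).} Whole-coordinate coloring fails if $a_1>(1-\varepsilon)m$, and this is where splitting is used. Consider the event that coordinate $1$ is split and all other active coordinates are unsplit; its probability is at least $\tfrac1m(1-\tfrac1m)^{m-1}\gtrsim 1/m$. Also condition on the $x/y$ coloring of the remaining mass $r=m-a_1<(1-\lambda)m$, which puts $s\in[0,r]$ units into $x$. The children are then indexed by $b=\beta_1\in\{0,\dots,a_1\}$, with $|\beta|=s+b$ and coefficient $2^{-a_1}\binom{a_1}{b}$. Set $w_b:=(2^{-a_1}\binom{a_1}{b})^{q_m}$. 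This sequence is symmetric and unimodal, so \Cref{lem:half-mass} applies to $K=\{b:\ \varepsilon m\le s+b\le (1-\varepsilon)m\}$. We need $K$ to contain a central integer of $\{0,\dots,a_1\}$ and to have length at least $\lceil (a_1+1)/2\rceil$. The length condition reads $(1-2\varepsilon)m\gtrsim a_1/2$, which is automatic. The central condition requires $\varepsilon m\le s+a_1/2\le(1-\varepsilon)m$ for every $s\in[0,m-a_1]$, i.e.\ $a_1/2\ge\varepsilon m$ and $m-a_1/2\le(1-\varepsilon)m$, i.e.\ $a_1\ge 2\varepsilon m$. So in the dominant case we capture at least half of $\sum_b w_b$. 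Since $q_m=2-O(1/m)$ and $a_1\le m$, the factor $2^{-a_1(q_m-2)}$ is bounded, giving $\sum_b w_b\gtrsim\sum_b (2^{-a_1}\binom{a_1}{b})^2=2^{-2a_1}\binom{2a_1}{a_1}\asymp a_1^{-1/2}\ge m^{-1/2}$. Multiplying by the $1/m$ event probability yields $c\,m^{-3/2}$.

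\emph{Matching the cases.} The dominant case works for $a_1\ge 2\varepsilon m$, so the diffuse case only needs $a_1<2\varepsilon m$ (plus possibly a second large coordinate). The main obstacle is the diffuse anticoncentration with several large coordinates, e.g.\ $a_1,a_2$ both close to $2\varepsilon m$. Each unsplit coloring then puts $0$, $a_1$, $a_2$, $a_1+a_2$, etc.\ plus the remainder into $x$, and one must check that some coloring of fixed positive probability lands in $[\varepsilon m,(1-\varepsilon)m]$. A worst case is a triple of equal large entries with small remainder, or a pair whose gaps straddle the window. I would first try a refined split: if the two largest satisfy $a_1+a_2$ big, split coordinate $1$ only while coloring coordinate $2$ to the opposite side, and redo the half-mass computation with the shifted window. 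The quadratic optimization of these constraints is what should produce the golden-ratio constant $\varepsilon_*=(3-\sqrt5)/4$, namely the root of $4\varepsilon^2-6\varepsilon+1=0$, and I expect verifying that this casework closes exactly for $\varepsilon<\varepsilon_*$ to be the delicate step.
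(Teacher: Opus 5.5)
Your dominant case is essentially the paper's Step~2, and it is fine. You split the top coordinate and leave the others unsplit (probability $\ge 1/(4m)$). You then check that $J$ contains $a_1/2$ and apply \Cref{lem:half-mass} to $\binom{a_1}{b}^{q_m}$. Finally you use $\sum_b p_b^{q_m}\ge\sum_b p_b^2=4^{-a_1}\binom{2a_1}{a_1}\ge 1/(2\sqrt m)$; the comparison $p_b^{q_m}\ge p_b^2$ follows directly from $p_b\le1$ and $q_m\le2$. Your derivation that this works exactly when $a_1\ge2\varepsilon m$ is correct. It needs only $\varepsilon<1/4$ and $m$ large, so that $(\tfrac12-2\varepsilon)m>1$ secures the length condition when $J$ is not truncated.

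The gap is the diffuse case $a_1<2\varepsilon m$, which you do not actually prove. The ``greedy or ordering argument'' with probability $2^{-O(1)}$ and the ``refined split'' are only sketched, and you leave the closing casework open. No casework is needed: with the threshold placed exactly at $2\varepsilon m$, Chebyshev covers the whole diffuse range. Condition on no active coordinate being split, which has probability $\ge1/4$. Then $S=\sum_j a_j\xi_j$ has mean $m/2$ and
\[
 \operatorname{Var}(S)=\tfrac14\sum_j a_j^2\le\tfrac14\,a_1 m\le\tfrac{\varepsilon m^2}{2}.
\]
Leaving $I_{m,\varepsilon}$ forces $|S-m/2|>(\tfrac12-\varepsilon)m$, so
\[
 \mathbb P\{S\notin I_{m,\varepsilon}\}\le\frac{2\varepsilon}{(1-2\varepsilon)^2}.
\]
This is $<1$ if and only if $4\varepsilon^2-6\varepsilon+1>0$, i.e.\ $\varepsilon<\varepsilon_*$. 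So $\varepsilon_*$ is simply the range of validity of Chebyshev in the diffuse regime, not the outcome of matching two cases or optimizing a refined split. Configurations with two or three entries near $2\varepsilon m$ are no obstacle, because the bound $\sum_j a_j^2\le a_1 m$ already absorbs them. Adding this computation gives diffuse capture at least $\delta_\varepsilon/4$ with $\delta_\varepsilon=1-2\varepsilon/(1-2\varepsilon)^2>0$. Combined with your dominant case, this proves \eqref{eq:capture} with $c_\varepsilon=\min\{\delta_\varepsilon/4,\,1/16\}$.
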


\begin{proof}
Write the positive entries of $\alpha$ as
\[
 a_1,\ldots,a_r,
 \qquad
 a_1+\cdots+a_r=m,
\]
and put
\[
 a:=\max_{1\le j\le r}a_j.
\]
In particular, $r\le m$.  We distinguish whether the degree is diffuse
or concentrated in one coordinate.

\smallskip
\noindent
\emph{Step 1: diffuse parents.}
Assume that
\[
 a\le2\varepsilon m.
\]
Consider the event $E_0$ that none of the $r$ active coordinates is
split.  Since $r\le m$,
\begin{equation}\label{eq:no-split-probability}
 \mathbb P(E_0)
 =
 \left(1-\frac1m\right)^r
 \ge
 \left(1-\frac1m\right)^m
 \ge\frac14.
\end{equation}
Conditional on $E_0$, the active coordinates are colored independently
by $x$ and $y$, each with probability $1/2$.  The resulting $x$-degree is
\[
 S:=\sum_{j=1}^r a_j\xi_j,
 \qquad
 \mathbb P(\xi_j=0)=\mathbb P(\xi_j=1)=\frac12.
\]
Therefore
\begin{equation}\label{eq:diffuse-mean-variance}
 \mathbb ES=\frac m2,
 \qquad
 \operatorname{Var}(S)
 =
 \frac14\sum_{j=1}^r a_j^2
 \le
 \frac{a}{4}\sum_{j=1}^r a_j
 =
 \frac{am}{4}
 \le
 \frac{\varepsilon m^2}{2}.
\end{equation}
The distance from $m/2$ to the complement of the central interval
$[\varepsilon m,(1-\varepsilon)m]$ is
$(1/2-\varepsilon)m$.  Chebyshev's inequality and
\eqref{eq:diffuse-mean-variance} give
\begin{equation}\label{eq:diffuse-Chebyshev}
 \mathbb P\{S\notin I_{m,\varepsilon}\mid E_0\}
 \le
 \frac{2\varepsilon}{(1-2\varepsilon)^2}.
\end{equation}
Set
\begin{equation}\label{eq:delta-epsilon}
 \delta_\varepsilon
 :=
 1-\frac{2\varepsilon}{(1-2\varepsilon)^2}.
\end{equation}
For $0<\varepsilon<1/2$,
\[
 \delta_\varepsilon>0
 \quad\Longleftrightarrow\quad
 4\varepsilon^2-6\varepsilon+1>0
 \quad\Longleftrightarrow\quad
 \varepsilon<\frac{3-\sqrt5}{4}.
\]
Thus $\delta_\varepsilon>0$ under the hypothesis of the theorem.

On $E_0$, every parent monomial produces a single child, whose coefficient
has modulus one.  Hence
\begin{equation}\label{eq:diffuse-capture}
 \mathbb E_\omega
 \sum_{\substack{\beta+\gamma=\alpha\\
                 |\beta|\in I_{m,\varepsilon}}}
 |c^\omega_{\alpha,\beta}|^{q_m}
 \ge
 \frac{\delta_\varepsilon}{4}.
\end{equation}

\smallskip
\noindent
\emph{Step 2: a dominant coordinate.}
Assume now that
\[
 a>2\varepsilon m,
\]
and choose $j_*$ so that $a_{j_*}=a$.  Let $E_*$ be the event that
$j_*$ is split and every other active coordinate is unsplit.  Its
probability satisfies
\begin{align}
 \mathbb P(E_*)
 &=
 \frac1m
 \left(1-\frac1m\right)^{r-1} \notag\\
 &\ge
 \frac1m
 \left(1-\frac1m\right)^{m-1}
 \ge
 \frac1{4m}.
 \label{eq:dominant-event-probability}
\end{align}

Fix the coloring of the unsplit coordinates, and let
\[
 s_0\in[0,m-a]
\]
be their total $x$-degree.  The dominant coordinate contributes
\[
 2^{-a}\sum_{b=0}^a
 \binom ab x_{j_*}^b y_{j_*}^{a-b}.
\]
The values of $b$ for which the total $x$-degree belongs to the central
window form the integer interval
\begin{equation}\label{eq:dominant-admissible-set}
 K:=J\cap\mathbb Z,
 \qquad
 J:=
 [\varepsilon m-s_0,(1-\varepsilon)m-s_0]\cap[0,a].
\end{equation}
We verify carefully that $K$ satisfies the hypotheses of
\Cref{lem:half-mass}.  Since $s_0\ge0$ and $a>2\varepsilon m$,
\[
 \varepsilon m-s_0
 \le\varepsilon m
 <\frac a2.
\]
On the other hand, $s_0\le m-a$ gives
\[
 (1-\varepsilon)m-s_0
 \ge a-\varepsilon m
 >\frac a2.
\]
Thus the real interval $J$ contains the midpoint $a/2$.

If its left endpoint is truncated at $0$, then
\[
 J=[0,U]
 \qquad\text{with}\qquad U>\frac a2.
\]
It follows directly that $J\cap\mathbb Z$ contains a central integer and
at least
\[
 \left\lceil\frac{a+1}{2}\right\rceil
\]
indices.  The case in which the right endpoint is truncated at $a$ is
symmetric.

Suppose finally that neither endpoint is truncated.  Then
\begin{align}
 |J|_{\mathbb R}
 &=
 (1-2\varepsilon)m \notag\\
 &\ge
 \frac a2+\left(\frac12-2\varepsilon\right)m,
 \label{eq:dominant-interval-length}
\end{align}
because $a\le m$.  Since
$\varepsilon<\varepsilon_*<1/4$, we may choose $m_\varepsilon$ so large
that
\[
 \left(\frac12-2\varepsilon\right)m>1
 \qquad(m\ge m_\varepsilon).
\]
Hence $|J|_{\mathbb R}>a/2+1$.  Every closed interval of length $L$
contains at least $\lfloor L\rfloor$ integers, so a direct distinction
between even and odd $a$ gives
\[
 |K|
 \ge
 \left\lceil\frac{a+1}{2}\right\rceil.
\]
Moreover, because $J$ contains $a/2$ and has length greater than one, it
contains at least one of the two nearest integers
$\lfloor a/2\rfloor$ and $\lceil a/2\rceil$.

Apply \Cref{lem:half-mass} to
\[
 w_b:=\binom ab^{q_m}.
\]
This sequence is symmetric and unimodal.  If
\[
 p_b:=2^{-a}\binom ab,
\]
then
\begin{equation}\label{eq:dominant-half-mass}
 \sum_{b\in K}p_b^{q_m}
 \ge
 \frac12\sum_{b=0}^a p_b^{q_m}.
\end{equation}
Since $0\le p_b\le1$ and $q_m\le2$,
\begin{align}
 \sum_{b=0}^a p_b^{q_m}
 &\ge
 \sum_{b=0}^a p_b^2 \notag\\
 &=
 4^{-a}\sum_{b=0}^a\binom ab^2 \notag\\
 &=
 4^{-a}\binom{2a}{a}
 \ge
 \frac1{2\sqrt a}
 \ge
 \frac1{2\sqrt m}.
 \label{eq:central-binomial-lower}
\end{align}
For completeness, the penultimate inequality follows by induction.  If
\[
 u_a:=4^{-a}\binom{2a}{a},
\]
then $u_1=1/2$ and
\[
 \frac{u_{a+1}}{u_a}
 =
 \frac{2a+1}{2a+2}
 \ge
 \sqrt{\frac a{a+1}}.
\]

Combining \eqref{eq:dominant-event-probability},
\eqref{eq:dominant-half-mass}, and
\eqref{eq:central-binomial-lower}, we obtain
\begin{equation}\label{eq:dominant-capture}
 \mathbb E_\omega
 \sum_{\substack{\beta+\gamma=\alpha\\
                 |\beta|\in I_{m,\varepsilon}}}
 |c^\omega_{\alpha,\beta}|^{q_m}
 \ge
 \frac1{16m^{3/2}}.
\end{equation}

The diffuse estimate \eqref{eq:diffuse-capture} and the dominant estimate
\eqref{eq:dominant-capture} together prove \eqref{eq:capture}.  For
example, after increasing $m_\varepsilon$ as above, one may take
\[
 c_\varepsilon
 :=
 \min\left\{
 \frac{\delta_\varepsilon}{4},
 \frac1{16}
 \right\}.
\]
\end{proof}

The two regimes exhaust all degree profiles.  Diffuse parents are captured
with fixed positive probability; concentrated parents pay $m^{-1}$ to split
the dominant coordinate and $m^{-1/2}$ for its binomial $\ell_2$ mass.  Thus
the uniform worst-case recovery scale is $m^{-3/2}$.

\begin{theorem}[Fixed-ratio recurrence]
\label{thm:recurrence}
Define the monotone envelope
\begin{equation}\label{eq:D-envelope}
 \mathcal D_m:=\max_{1\le j\le m}D_j.
\end{equation}
For every $0<\varepsilon<\varepsilon_*$ there is
$C_\varepsilon<\infty$ such that, with
\[
 \rho:=1-\varepsilon,
\]
one has
\begin{equation}\label{eq:fixed-ratio-recurrence}
 D_m
 \le
 C_\varepsilon m^{5/(2q_m)}
 \mathcal D_{\lceil\rho m\rceil}
\end{equation}
for all sufficiently large $m$.
\end{theorem}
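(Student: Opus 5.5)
The plan is to combine exact ancestry (\Cref{lem:recovery}), central capture (\Cref{thm:capture}), and the bihomogeneous reduction (\Cref{thm:bihom}). Fix $P$ with $\|P\|_\infty=1$ and $0<\varepsilon<\varepsilon_*$. Averaging the identity \eqref{eq:exactmass} over $\omega$, with $I=I_{m,\varepsilon}$, and applying \eqref{eq:capture} to each parent $\alpha$ gives
\[
 c_\varepsilon m^{-3/2}\|a(P)\|_{q_m}^{q_m}
 \le
 \mathbb E_\omega\sum_{d\in I_{m,\varepsilon}}\|a(Q_{\omega,d})\|_{q_m}^{q_m}.
\]
Hence there is an outcome $\omega$ for which the right-hand sum is at least the left-hand side. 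Only finitely many outcomes matter: the active coordinates of $P$ are finite in number, and the others do not affect the identity. So the expectation is a finite average.

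For each $d\in I_{m,\varepsilon}$, the component $Q_{\omega,d}$ is $(d,m-d)$-bihomogeneous, and $\|Q_{\omega,d}\|_\infty\le1$ by \eqref{eq:component-contractivity}. By \eqref{eq:bihom-simple} with $e=m-d$,
\[
 \|a(Q_{\omega,d})\|_{q_m}\le e^{1/2}D_d^{d/m}D_e^{e/m}\le e^{1/2}\mathcal D_{\lfloor(1-\varepsilon)m\rfloor},
\]
since both $d$ and $e$ lie in $[\varepsilon m,(1-\varepsilon)m]$ and the exponents sum to one. In particular, $d,e\ge1$ once $m\ge 1/\varepsilon$, and $\mathcal D_{\lfloor\rho m\rfloor}\le\mathcal D_{\lceil\rho m\rceil}$.

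Summing over at most $m+1$ values of $d$ gives
\[
 \|a(P)\|_{q_m}^{q_m}\le c_\varepsilon^{-1}m^{3/2}(m+1)\,e^{q_m/2}\,\mathcal D_{\lceil\rho m\rceil}^{q_m}.
\]
Taking $q_m$-th roots yields
\[
 \|a(P)\|_{q_m}\le C m^{(3/2+1)/q_m}\mathcal D_{\lceil\rho m\rceil}= C m^{5/(2q_m)}\mathcal D_{\lceil\rho m\rceil},
\]
where $(m+1)\le 2m$ and $c_\varepsilon^{-1/q_m}\le \max(1,c_\varepsilon^{-1})$ are absorbed into $C_\varepsilon$. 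Taking the supremum over $P$ and over $n$ proves \eqref{eq:fixed-ratio-recurrence} for $m\ge\max(m_\varepsilon,1/\varepsilon)$.

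The main point requiring care is the counting loss: the mass lost when passing from the sum over bidegrees to the maximum is a factor $m+1$ on $q_m$-th powers. This is what produces the exponent $5/2$ rather than $3/2$ before dividing by $q_m$. I would check that no finer step is needed here, since the statement allows exactly this exponent. One should also note that the dimension of $Q_\omega$ is $2n$, which is harmless because $D_d$ is dimension-free.
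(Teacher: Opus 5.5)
Your proposal is correct and follows essentially the same route as the paper: you average the exact-ancestry identity against central capture to get the $m^{3/2}$ recovery loss, bound each central bidegree uniformly by $\exp(1/2)\mathcal D_{\lceil\rho m\rceil}$ via the bihomogeneous reduction and contractivity, and pay a further factor of order $m$ for the number of bidegrees before taking $q_m$-th roots. Selecting a single good outcome $\omega$ and the finiteness remark are unnecessary, since the per-bidegree bound is uniform in $\omega$ and the expectation can be bounded directly, but they do no harm.
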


\begin{proof}
Let $P$ be an arbitrary $m$-homogeneous polynomial.  Taking expectations
in \eqref{eq:exactmass}, using Tonelli's theorem, and then applying
\Cref{thm:capture} parent by parent gives
\begin{align}
 &\mathbb E_\omega
 \sum_{d\in I_{m,\varepsilon}}
 \|a(Q_{\omega,d})\|_{q_m}^{q_m} \notag\\
 &\qquad=
 \sum_{|\alpha|=m}|a_\alpha|^{q_m}
 \mathbb E_\omega
 \sum_{\substack{\beta+\gamma=\alpha\\
                 |\beta|\in I_{m,\varepsilon}}}
 |c^\omega_{\alpha,\beta}|^{q_m} \notag\\
 &\qquad\ge
 c_\varepsilon m^{-3/2}
 \sum_{|\alpha|=m}|a_\alpha|^{q_m}.
 \label{eq:recovery-averaged-lower}
\end{align}
Equivalently,
\begin{equation}\label{eq:recovery-averaged}
 \|a(P)\|_{q_m}^{q_m}
 \le
 c_\varepsilon^{-1}m^{3/2}
 \mathbb E_\omega
 \sum_{d\in I_{m,\varepsilon}}
 \|a(Q_{\omega,d})\|_{q_m}^{q_m}.
\end{equation}

If $d\in I_{m,\varepsilon}$, then
\[
 1\le d\le\rho m,
 \qquad
 1\le m-d\le\rho m
\]
for all sufficiently large $m$.  Hence
\[
 D_d,\ D_{m-d}
 \le
 \mathcal D_{\lceil\rho m\rceil}.
\]
Applying \Cref{thm:bihom} to $Q_{\omega,d}$ and then
\Cref{lem:recovery}, we obtain
\begin{align}
 \|a(Q_{\omega,d})\|_{q_m}
 &\le
 \exp(1/2)
 D_d^{d/m}
 D_{m-d}^{(m-d)/m}
 \|Q_{\omega,d}\|_\infty \notag\\
 &\le
 \exp(1/2)
 \mathcal D_{\lceil\rho m\rceil}
 \|P\|_\infty.
 \label{eq:central-bidegree-bound}
\end{align}
The exponents $d/m$ and $(m-d)/m$ add to one; this is the precise point
at which the fractional form of the two-block theorem becomes decisive.

The central window contains at most $m$ bidegrees.  Substituting
\eqref{eq:central-bidegree-bound} into
\eqref{eq:recovery-averaged} yields
\[
 \|a(P)\|_{q_m}^{q_m}
 \le
 c_\varepsilon^{-1}
 m^{5/2}
 \exp(q_m/2)
 \mathcal D_{\lceil\rho m\rceil}^{q_m}
 \|P\|_\infty^{q_m}.
\]
Taking the $q_m$-th root gives
\[
 \|a(P)\|_{q_m}
 \le
 \exp(1/2)c_\varepsilon^{-1/q_m}
 m^{5/(2q_m)}
 \mathcal D_{\lceil\rho m\rceil}
 \|P\|_\infty.
\]
Since $q_m\ge4/3$ for $m\ge2$ and $c_\varepsilon$ is fixed, the first two
factors are bounded by a constant depending only on $\varepsilon$.
Taking the supremum over $P$ proves
\eqref{eq:fixed-ratio-recurrence}.
\end{proof}

The one-step exponent records exactly two losses: $m^{3/2}$ from recovery
in $q_m$-power and $m$ from summing the central bidegrees.  After taking the
$q_m$-th root this becomes $m^{5/(2q_m)}=m^{5/4+o(1)}$, uniformly in the
number of variables.

\begin{theorem}[Quasipolynomial precursor]
\label{thm:quasipoly}
The optimal complex polynomial Bohnenblust--Hille constants satisfy
\begin{equation}\label{eq:quasipolynomial-limsup}
 \limsup_{m\to\infty}
 \frac{\log D_m}{(\log m)^2}
 \le
 \frac{5}
 {8\log\!\bigl(4/(1+\sqrt5)\bigr)}
 =
 2.949012\ldots<2.95.
\end{equation}
Equivalently,
\begin{equation}\label{eq:quasipolynomial-explicit-form}
 D_m
 \le
 \exp\!\left(
 \left[
 \frac{5}
 {8\log\!\bigl(4/(1+\sqrt5)\bigr)}
 +o(1)
 \right](\log m)^2
 \right).
\end{equation}
\end{theorem}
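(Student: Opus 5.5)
The plan is to combine \Cref{thm:recurrence} with \Cref{lem:fixedratio} and then let $\varepsilon\uparrow\varepsilon_*$. Fix $0<\varepsilon<\varepsilon_*=(3-\sqrt5)/4$ and put $\rho=1-\varepsilon$.

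\Cref{thm:recurrence} gives $D_m\le C_\varepsilon m^{5/(2q_m)}\mathcal D_{\lceil\rho m\rceil}$ for all large $m$. Since $q_m=2m/(m+1)$, we have
\[
 \frac{5}{2q_m}=\frac54\Bigl(1+\frac1m\Bigr),
\]
so $m^{5/(2q_m)}=m^{5/4}m^{5/(4m)}\le e^{5/(4e)}\,m^{5/4}$. The extra factor is uniformly bounded and can be absorbed into $C_\varepsilon$. Hence hypothesis \eqref{eq:abstract-fixed-ratio} of \Cref{lem:fixedratio} holds with $A_m=D_m$, $a=5/4$, and ratio $\rho$.

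\Cref{lem:fixedratio} then yields
\[
 \log D_m\le\log\mathcal D_m\le\frac{5}{8\log(1/(1-\varepsilon))}(\log m)^2+O_\varepsilon(\log m).
\]
Dividing by $(\log m)^2$ and taking $\limsup$ gives the bound $5/(8\log(1/(1-\varepsilon)))$ for every $\varepsilon<\varepsilon_*$. Since $\varepsilon$ is fixed before taking the limit, the $\varepsilon$-dependence of the $O_\varepsilon$ term and of $C_\varepsilon$ is harmless. Letting $\varepsilon\uparrow\varepsilon_*$, continuity gives the bound $5/(8\log(1/(1-\varepsilon_*)))$.

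Next, identify the constant. We have
\[
 1-\varepsilon_*=\frac{1+\sqrt5}{4},\qquad \frac{1}{1-\varepsilon_*}=\frac{4}{1+\sqrt5},
\]
which gives \eqref{eq:quasipolynomial-limsup}. The numerical value follows from $\log(4/(1+\sqrt5))\approx 0.21194$, so the constant is $0.625/0.21194\approx 2.949$. Finally, \eqref{eq:quasipolynomial-explicit-form} is simply a restatement of the $\limsup$ bound: for every $\eta>0$, eventually $\log D_m\le(c+\eta)(\log m)^2$, where $c$ is the constant above.

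The only real point of care is that the limit must be taken in the correct order. It must be checked that $\lceil\rho m\rceil<m$ eventually, which holds since $\rho<1$. It must also be checked that the constants from the finite initial segment are allowed to depend on $\varepsilon$, which \Cref{lem:fixedratio} explicitly permits. Beyond this, I expect no obstacle.
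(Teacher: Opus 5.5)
Your proposal is correct and is essentially the paper's own proof. You absorb the factor $m^{5/(4m)}\le\exp(5/(4\mathrm e))$ into $C_\varepsilon$, apply \Cref{lem:fixedratio} to $A_m=D_m$ with $a=5/4$ and $\rho=1-\varepsilon$, use $D_m\le\mathcal D_m$, and let $\varepsilon\uparrow\varepsilon_*$ using $1-\varepsilon_*=(1+\sqrt5)/4$.
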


\begin{proof}
Since
\[
 q_m=\frac{2m}{m+1},
\]
we have
\begin{equation}\label{eq:one-step-exponent}
 \frac5{2q_m}
 =
 \frac54+\frac{5}{4m}.
\end{equation}
Moreover,
\[
 m^{5/(4m)}
 =
 \exp\!\left(\frac{5\log m}{4m}\right)
 \le
 \exp\!\left(\frac{5}{4\mathrm e}\right),
\]
because $\sup_{x>0}(\log x)/x=1/\mathrm e$.  Thus the factor
$m^{5/(2q_m)}$ in \eqref{eq:fixed-ratio-recurrence} may be replaced,
after enlarging $C_\varepsilon$, by $m^{5/4}$.

Apply \Cref{lem:fixedratio} to the sequence $A_m=D_m$, with
\[
 a=\frac54,
 \qquad
 \rho=1-\varepsilon.
\]
It follows that
\[
 \limsup_{m\to\infty}
 \frac{\log\mathcal D_m}{(\log m)^2}
 \le
 \frac{5}{8\log(1/(1-\varepsilon))}.
\]
Since $D_m\le\mathcal D_m$, the same bound holds for $D_m$.  Finally,
let $\varepsilon\uparrow\varepsilon_*$.  By
\eqref{eq:epsilon-star},
\[
 1-\varepsilon_*
 =
 \frac{1+\sqrt5}{4},
\]
and therefore
\[
 \log\frac1{1-\varepsilon_*}
 =
 \log\!\left(\frac4{1+\sqrt5}\right).
\]
This proves \eqref{eq:quasipolynomial-limsup}, and
\eqref{eq:quasipolynomial-explicit-form} is an equivalent formulation.
\end{proof}

\Cref{thm:quasipoly} is structural rather than numerical: a fixed proportion
of the degree is removed at polynomial cost, and the quadratic logarithm is
the accumulated cost along geometrically decreasing degrees.  The weighted
argument below does not improve this iteration; it prevents the loss from
being paid independently at each generation.

\section{Weighted bootstrap and polynomial growth}
\label{sec:polynomial-bootstrap}

The fixed-ratio argument has already identified the relevant coefficient
geometry.  Balanced monomials admit a two-block treatment, whereas a monomial
carrying more than half of its degree in one coordinate can be compressed to
lower homogeneous degree.  Direct iteration pays a polynomial loss at every
generation.  The weighted argument below places all homogeneous levels in one
square function and prevents that loss from accumulating.

The main theorem of this section is deliberately qualitative.

\begin{theorem}[Polynomial growth]\label{thm:main}
There are absolute constants $B_0<\infty$ and $K<\infty$ such that, for every
analytic polynomial
\[
 F=\sum_{r=0}^{M}F_r,
\]
where $F_r$ is $r$-homogeneous,
\begin{equation}\label{eq:weighted-polynomial-main}
 \left[
 \sum_{r=1}^{M}
 \frac{\|a(F_r)\|_{q_r}^{2}}{r^{2B_0}}
 \right]^{1/2}
 \le K\|F\|_\infty.
\end{equation}
Consequently,
\begin{equation}\label{eq:polynomial-main}
 D_m\le K m^{B_0}
 \qquad(m\ge1).
\end{equation}
One may take $B_0=5$.
\end{theorem}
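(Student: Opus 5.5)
We will prove the weighted estimate \eqref{eq:weighted-polynomial-main} by strong induction on $M$, working with the quantity
\[
 \Gamma_M:=\sup\left\{
 \left[\sum_{r=1}^{M}\frac{\|a(F_r)\|_{q_r}^{2}}{r^{2B}}\right]^{1/2}\Big/\|F\|_\infty
 \right\},
\]
the supremum taken over all nonzero analytic polynomials of degree at most $M$ in any number of variables. Here $B$ is a parameter, and $\overline\Gamma_M:=\max_{j\le M}\Gamma_j$.

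The consequence \eqref{eq:polynomial-main} is immediate from \eqref{eq:weighted-polynomial-main}. Apply it to $F=P$ homogeneous of degree $m$; only the term $r=m$ survives, giving $\|a(P)\|_{q_m}\le Km^{B}\|P\|_\infty$. Moreover, since the homogeneous projections $F\mapsto F_r$ are contractive by the rotation average \eqref{eq:x-degree-projection}, each $\Gamma_M$ is finite, with $\Gamma_M\le(\sum_r D_r^2r^{-2B})^{1/2}$. Finiteness is what makes absorption legitimate. The goal is the recursion announced in the introduction,
\[
 \Gamma_M\le A+c\,\Gamma_M+E\,\overline\Gamma_{\lfloor\theta M\rfloor}^{\,\theta},\qquad c<1,\ 0<\theta<1.
\]
From it, $\Gamma_M\le(A+E\overline\Gamma^{\theta}_{\lfloor\theta M\rfloor})/(1-c)$. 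Since the power $\theta<1$ is sublinear, strong induction then yields a uniform bound $\Gamma_M\le K$.

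To derive the recursion, fix $F$ and a level $r$, and split each $F_r$ as in \Cref{thm:capture} into diffuse and dominant parts. We use the random substitution \eqref{eq:subs} with $\vartheta=1/r$, together with the exact ancestry of \Cref{lem:recovery}, which lets us treat the two kinds of parent coefficient separately.

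\emph{Diffuse parents} (maximal exponent at most $r/2$). On $E_0$, exact ancestry and Chebyshev control how the mass splits into bidegrees $(d,e)$, $d+e=r$. On each bidegree we would use \Cref{lem:Blei} exactly as in \Cref{thm:bihom}. However, instead of bounding the rows by $D_d$ pointwise, we bound them by the weighted quantity at levels $d$ and $e$, applied to the polynomials $x\mapsto Q(x,y)$. Since $\|Q_\omega\|_\infty\le\|F\|_\infty$, these lower levels are controlled by $\Gamma_M$ itself times $d^{B}$ and $e^{B}$. The fractional exponents $d/r$ and $e/r$ then produce
\[
 d^{B\theta}e^{B(1-\theta)}r^{-B}=e^{-Bh(\theta)}\qquad(\theta=d/r).
\]
Summing over $d$ and over $r$, together with the polynomial recovery loss ($r$ factors from capture and from the number of bidegrees), gives a contribution $c\,\Gamma_M$. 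Here $c\lesssim\sum_d r^{O(1)}e^{-Bh(d/r)}$ restricted to $\varepsilon r\le d\le(1-\varepsilon)r$, which is small once $B$ is large. The parts with $d$ near $0$ or $r$ are excluded because diffuse monomials rarely land there; this is where the cap $a\le r/2$ and the Chebyshev bound enter.

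\emph{Dominant parents} (one coordinate with exponent $a>r/2$). First we isolate, for each coordinate $j$, the layer of $F_r$ in which $z_j$ carries exponent exactly $a$. We do this with a root-of-unity average in $z_j$, which is contractive, after a color projection to make the dominant coordinate unique. Then we replace $z_j^{a}$ by a fresh variable $w$. By the maximum principle this compression is isometric in sup norm. It turns degree $r$ into degree $r-a+1<r/2+1$, and the $q$-norms of coefficients transfer with the exponent change $q_r\to q_{r-a+1}$, which costs at most a power of $r$. Summing over the layers using the weights $r^{-2B}$ against $(r-a+1)^{2B}$ then yields the term $E\,\overline\Gamma_{\lfloor\theta M\rfloor}^{\theta}$, with an interpolation to a fractional power if needed to match the form of the recursion.

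The main obstacle is making the dominant compression and the diffuse two-block step compatible with a single square function. The projections must be contractive for the whole $F$ simultaneously, and the losses that are polynomial in $r$ must all be beaten by $e^{-Bh}$ or by $r^{-B}$. Bookkeeping these with an unoptimized Chebyshev argument should close the recursion for $B>9/2$, and hence for $B_0=5$.
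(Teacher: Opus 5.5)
Your outer architecture is the right one: a weighted square-function constant $\Gamma_M$, a diffuse/dominant split, absorption, and a sublinear recursion closed by strong induction. However, the diffuse step as you describe it cannot produce a contraction.

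\textbf{The diffuse constant is not uniform in $r$.} The entropy gain is only a constant factor. Since $h(\theta)\le\log 2$, one has $\exp\{-Bh(d/r)\}\ge 2^{-B}$ for every bidegree, independently of $r$. Your bound $c\lesssim\sum_{\varepsilon r\le d\le(1-\varepsilon)r}r^{O(1)}\exp\{-Bh(d/r)\}$ is therefore at least of order $r\,2^{-B}$. This tends to infinity with $r$ for every fixed $B$. So it is not ``small once $B$ is large'', whereas absorption needs $c<1$ uniformly in $r\le M$ and in $M$.

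The same defect appears if you use $\Gamma_M$ only as a per-level bound $\Gamma_M d^B$. Each central bidegree then receives the bound $\mathrm e^{1/2}\exp\{-Bh(d/r)\}\Gamma_M\|F\|_\infty$ after division by $r^B$, with nothing decaying in $r$. Summing over the $\asymp r$ bidegrees, and then over $r\le M$ in the square function, gives a factor growing with $M$. In the diffuse regime no loss polynomial in $r$ is affordable: not from counting bidegrees, not from summing levels, and not from capture. For diffuse parents, whole-coordinate coloring already gives a constant capture probability $p$, so the $m^{-3/2}$ of \Cref{thm:capture} must not be imported.

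\textbf{Missing ingredients for the diffuse part.} Two devices remove all counting.
\begin{enumerate}[label=(\roman*)]
\item Row and column control must hold simultaneously, as square functions over all bidegrees. Apply Weissler's contraction at radius $\sqrt{d/(d+1)}$ to the full $y$-polynomial $\sum_{e}\sum_{\gamma}a^{d,e}_{\beta\gamma}y^\gamma$, which handles all $e$ at once via Parseval. Then apply the definition of $\Gamma_M$ pointwise in $y$ to $x\mapsto Q(x,y)$, for all levels $d$ at once. This gives $\sum_{d,e}\widehat X_{d,e}^2d^{-2B}\le\Gamma_M^2\|Q\|_\infty^2$ and its column analogue (\Cref{lem:bootstrap-normalized-row-column}).
\item The products $U_{d,e}^{2\theta}V_{d,e}^{2(1-\theta)}$, with varying angle $\theta=d/r\in[a,1-a]$, must be summed over all $(d,e)$ and all $r$ by a H\"older argument that costs only $2\exp\{-h(a)\}$ (\Cref{lem:bootstrap-mixed-angle}).
\end{enumerate}
The only residual $r$-dependence is then $(r+1)^{1/(2r)}\to1$. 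The contraction constant becomes $p^{-1/2}\sqrt{2\mathrm e}\exp\{-(B+1/2)h(a)\}$, which is below $1$ for $B\ge9/2$ with $a=1/10$ and the Chebyshev capture probability.

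\textbf{Two points in the dominant regime that are essential, not optional.}
\begin{itemize}
\item The compression must treat a whole color class at once. Color with $s=k+1$ colors, project onto color-$c$ degree $r-k$, then average over $(r-k)$th roots of unity in every color-$c$ coordinate. Isolating the dominant power coordinate by coordinate produces $n$ pieces and a dimension-dependent sum.
\item The fractional power must come from log-convexity between $\ell^{q_s}$ and $\ell^2$, using $\|a(G)\|_2\le\|G\|_\infty$. This gives $(\Gamma_s s^B)^{s/r}$ and the decay $r^{2/q_r-B(1-\theta)}$. Without this interpolation, the dominant contribution at level $r$ is of order $\theta^B r^{O(1)}\,\overline\Gamma$. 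That is neither square-summable in $r$ nor sublinear in $\overline\Gamma$, so the recursion has no finite $E$.
\end{itemize}
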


No effort is needed to optimize $B_0$ in the proof of polynomiality.  A
separate subsection will show first that the same architecture gives every
exponent $B>3$, and then identify the sharp threshold of the present
one-window, two-regime bootstrap when Keller--Klein's sharp
Rademacher concentration theorem is used at its natural scale.

\subsection{Weighted graded constants and central contraction}

For $B>0$ and $M\ge1$, let $\Gamma_M(B)$ be the least constant such that
\begin{equation}\label{eq:Gamma-bootstrap}
 \left[
 \sum_{r=1}^{M}
 \frac{\|a(F_r)\|_{q_r}^{2}}{r^{2B}}
 \right]^{1/2}
 \le \Gamma_M(B)\|F\|_\infty
\end{equation}
for every analytic polynomial $F=\sum_{r=0}^{M}F_r$ in finitely many
variables.  Put
\[
 \overline\Gamma_M(B):=\max\{1,\Gamma_M(B)\}.
\]
The constants are finite and nondecreasing in $M$, since homogeneous
projections are contractive and
\[
 \Gamma_M(B)
 \le
 \left(\sum_{r=1}^{M}\frac{D_r^2}{r^{2B}}\right)^{1/2}.
\]

We begin with an elementary summation inequality that allows the
interpolation angle to vary from one bidegree to another.

\begin{lemma}[Mixed-angle summation]\label{lem:bootstrap-mixed-angle}
Let $0<a<1/2$.  If $u_i,v_i\ge0$ satisfy
$\sum_i u_i\le1$, $\sum_i v_i\le1$, and
$\theta_i\in[a,1-a]$, then
\begin{equation}\label{eq:bootstrap-mixed-angle}
 \sum_i u_i^{\theta_i}v_i^{1-\theta_i}
 \le
 2a^a(1-a)^{1-a}
 =2\exp\{-h(a)\},
\end{equation}
where
\[
 h(a):=-a\log a-(1-a)\log(1-a).
\]
\end{lemma}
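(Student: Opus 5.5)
The plan is a termwise bound followed by summation.  For each $i$, use the weighted AM--GM (Young) inequality with a free scaling parameter,
\[
 u_i^{\theta_i}v_i^{1-\theta_i}
 =(\lambda u_i)^{\theta_i}(\lambda^{-\theta_i/(1-\theta_i)}v_i)^{1-\theta_i},
\]
or more simply the elementary bound $x^\theta y^{1-\theta}\le \theta^{\theta}(1-\theta)^{1-\theta}\cdot\bigl(\tfrac{x}{\theta}\bigr)^{\theta}\bigl(\tfrac{y}{1-\theta}\bigr)^{1-\theta}$ combined with the observation that, for fixed $x,y\ge0$,
\[
 x^\theta y^{1-\theta}=\min_{t>0}\bigl(\theta t^{1-\theta}\ldots\bigr),
\]
which is cleaner to replace by the following device.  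Write
\[
 u^\theta v^{1-\theta}
 =\theta^\theta(1-\theta)^{1-\theta}
 \Bigl(\frac{u}{\theta}\Bigr)^{\theta}\Bigl(\frac{v}{1-\theta}\Bigr)^{1-\theta}
 \le \theta^\theta(1-\theta)^{1-\theta}\max\Bigl\{\frac{u}{\theta},\frac{v}{1-\theta}\Bigr\}
\]
— which loses too much.  So instead I would use the sharper inequality
\[
 u^\theta v^{1-\theta}\le \theta^{\theta}(1-\theta)^{1-\theta}\,(u+v)\cdot
 \frac{u^\theta v^{1-\theta}}{\theta^\theta(1-\theta)^{1-\theta}(u+v)},
\]
and bound the last factor: setting $s=u/(u+v)\in[0,1]$, the quantity $s^\theta(1-s)^{1-\theta}/(\theta^\theta(1-\theta)^{1-\theta})$ is maximized at $s=\theta$, where it equals $1$.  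Hence the key termwise estimate is
\[
 u_i^{\theta_i}v_i^{1-\theta_i}\le (u_i+v_i)\,\max_{s\in[0,1]}s^{\theta_i}(1-s)^{1-\theta_i}
 =(u_i+v_i)\,\theta_i^{\theta_i}(1-\theta_i)^{1-\theta_i}
 =(u_i+v_i)e^{-h(\theta_i)}.
\]
(This is just the concavity/Gibbs inequality $\theta\log s+(1-\theta)\log(1-s)\le -h(\theta)$.)

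Next, since the binary entropy $h$ is concave, symmetric about $1/2$, and increasing on $[0,1/2]$, its minimum over $[a,1-a]$ is attained at the endpoints, so $h(\theta_i)\ge h(a)$ and $e^{-h(\theta_i)}\le a^a(1-a)^{1-a}$ for every $i$.  Summing,
\[
 \sum_i u_i^{\theta_i}v_i^{1-\theta_i}\le a^a(1-a)^{1-a}\sum_i(u_i+v_i)\le 2a^a(1-a)^{1-a},
\]
which is \eqref{eq:bootstrap-mixed-angle}.

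There is essentially no obstacle; the only point requiring care is recognizing that one must not apply Hölder with a single angle (impossible since $\theta_i$ varies), and that the pointwise Gibbs bound $s^\theta(1-s)^{1-\theta}\le\theta^\theta(1-\theta)^{1-\theta}$, applied with $s=u_i/(u_i+v_i)$ (terms with $u_i+v_i=0$ vanish), handles each term uniformly.
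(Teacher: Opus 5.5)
Your argument is correct, and it takes a different and shorter route than the paper. Discard the false starts at the beginning (the unfinished $\min_{t>0}$ formula, the lossy $\max$ bound, and the tautological identity). What remains is the termwise estimate
\[
 u^\theta v^{1-\theta}\le \theta^\theta(1-\theta)^{1-\theta}(u+v)=\exp\{-h(\theta)\}(u+v),
\]
followed by $h(\theta_i)\ge h(a)$ on $[a,1-a]$ and summation. The termwise estimate is just weighted AM--GM applied to $u/\theta$ and $v/(1-\theta)$ with weights $\theta,1-\theta$; it is equivalent to your Gibbs bound.

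The paper argues differently:
\begin{itemize}
\item it splits the indices according to $u_i\ge v_i$ or $u_i<v_i$;
\item it replaces each $\theta_i$ by the extreme angle ($1-a$ or $a$), implicitly using monotonicity of $\theta\mapsto u^\theta v^{1-\theta}$;
\item it applies H\"older on each part to get $U_+^{1-a}V_+^a+U_-^aV_-^{1-a}$;
\item it enlarges unused mass to normalize both totals to $1$;
\item it applies H\"older a second time to reach $(1+\delta)^{1-a}(1-\delta)^a$ with $\delta=U_+-V_+$, and maximizes at $\delta=1-2a$.
\end{itemize}

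Your route avoids the splitting, the normalization, both H\"older steps and the one-variable optimization. It also proves a formally stronger statement, $\sum_i u_i^{\theta_i}v_i^{1-\theta_i}\le\sum_i \exp\{-h(\theta_i)\}(u_i+v_i)$, which needs only $\sum_i(u_i+v_i)\le 2$ rather than the two separate mass constraints. The paper's version stays inside the H\"older/interpolation formalism used throughout that section, but gains nothing in the constant. Both give $2a^a(1-a)^{1-a}$, which is sharp: two indices with $(u,v,\theta)=(1-a,a,1-a)$ and $(a,1-a,a)$ give equality in either argument.
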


\begin{proof}
Split the indices according to $u_i\ge v_i$ or $u_i<v_i$.  H\"older's
inequality gives
\[
 \sum_i u_i^{\theta_i}v_i^{1-\theta_i}
 \le
 U_+^{1-a}V_+^a+U_-^aV_-^{1-a},
\]
where $U_\pm,V_\pm$ are the corresponding partial masses.  Enlarging unused
mass, we may suppose that $U_++U_-=V_++V_-=1$.  Write
$U_+=x$, $V_+=y$, with $0\le y\le x\le1$.  A second application of
H\"older gives
\begin{align*}
 x^{1-a}y^a+(1-x)^a(1-y)^{1-a}
 &\le (1+x-y)^{1-a}(1-x+y)^a.
\end{align*}
With $\delta=x-y\in[0,1]$, the right-hand side is
$(1+\delta)^{1-a}(1-\delta)^a$.  Its maximum occurs at
$\delta=1-2a$ and equals $2a^a(1-a)^{1-a}$.
\end{proof}

Let
\[
 Q(x,y)=\sum_{d,e\ge0}Q_{d,e}(x,y)
\]
be analytic of total degree at most $M$, with $Q_{d,e}$ bihomogeneous of
bidegree $(d,e)$.  Write
\[
 Q_{d,e}(x,y)
 =
 \sum_{|\beta|=d,\ |\gamma|=e}
 a^{d,e}_{\beta\gamma}x^\beta y^\gamma,
\]
and set
\begin{align*}
 X_{d,e}
 &:={}
 \left[
 \sum_{|\beta|=d}
 \left(\sum_{|\gamma|=e}|a^{d,e}_{\beta\gamma}|^2\right)^{q_d/2}
 \right]^{1/q_d},\\
 Y_{d,e}
 &:={}
 \left[
 \sum_{|\gamma|=e}
 \left(\sum_{|\beta|=d}|a^{d,e}_{\beta\gamma}|^2\right)^{q_e/2}
 \right]^{1/q_e},\\
 \widehat X_{d,e}
 &:={}
 \left(\frac d{d+1}\right)^{e/2}X_{d,e},
 \qquad
 \widehat Y_{d,e}
 :={}
 \left(\frac e{e+1}\right)^{d/2}Y_{d,e}.
\end{align*}

\begin{lemma}[Simultaneous row and column control]
\label{lem:bootstrap-normalized-row-column}
For every $B>0$,
\begin{align}
 \sum_{d,e\ge1}\frac{\widehat X_{d,e}^2}{d^{2B}}
 &\le
 \Gamma_M(B)^2\|Q\|_\infty^2,
 \label{eq:bootstrap-normalized-row}\\
 \sum_{d,e\ge1}\frac{\widehat Y_{d,e}^2}{e^{2B}}
 &\le
 \Gamma_M(B)^2\|Q\|_\infty^2.
 \label{eq:bootstrap-normalized-column}
\end{align}
\end{lemma}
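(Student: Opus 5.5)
We follow the proof of \Cref{thm:bihom}, row by row, but now sum over all bidegrees at once with the weights $d^{-2B}$. We prove \eqref{eq:bootstrap-normalized-row}; the column estimate \eqref{eq:bootstrap-normalized-column} follows by exchanging the roles of $x$ and $y$.

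For fixed $d,e\ge1$ and $|\beta|=d$, set $A^{d,e}_\beta(y):=\sum_{|\gamma|=e}a^{d,e}_{\beta\gamma}y^\gamma$. This is $e$-homogeneous. By Parseval and \Cref{lem:hyper} with $p=q_d$, and since $2/q_d=(d+1)/d$, we get
\[
 \Bigl(\sum_\gamma|a^{d,e}_{\beta\gamma}|^2\Bigr)^{1/2}
 \le\Bigl(\frac{d+1}{d}\Bigr)^{e/2}\|A^{d,e}_\beta\|_{L^{q_d}(\mathbb T^{n_y})}.
\]
The normalization in $\widehat X_{d,e}$ cancels this factor exactly. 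Raising to the power $q_d$, summing in $\beta$ and using Tonelli gives
\[
 \widehat X_{d,e}^{q_d}\le\int_{\mathbb T^{n_y}}\sum_{|\beta|=d}|A^{d,e}_\beta(y)|^{q_d}\,dy .
\]

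Now freeze $y\in\mathbb T^{n_y}$ and consider $G_y(x):=Q(x,y)$, a polynomial in $x$ of degree at most $M$. Its $x$-homogeneous component of degree $d$ is $\sum_e\sum_\beta A^{d,e}_\beta(y)x^\beta$. For this fixed $y$, different $e$ give the same monomial $x^\beta$, so the coefficient of $x^\beta$ in $(G_y)_d$ is $\sum_e A^{d,e}_\beta(y)$. We remove this collision by rotating $y$. Fix $e$ and replace $y$ by $\lambda y$ with $\lambda\in\mathbb T$. Averaging against $\lambda^{-e}$ isolates the $e$-part, so that $\int_{\mathbb T^{n_y}}\sum_\beta|A^{d,e}_\beta(y)|^{q_d}dy$ can be related to the full slice.

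The cleaner route is to apply $\Gamma_M(B)$ to the one-variable-augmented polynomial $x\mapsto Q(x,\lambda y)$ and integrate in $(\lambda,y)$. The key inequality we aim for is
\[
 \sum_{d,e\ge1}\frac{\widehat X_{d,e}^2}{d^{2B}}
 \le\int\!\!\int\sum_{d}\frac{\bigl\|a\bigl((G_{\lambda y})_d\bigr)\bigr\|_{q_d}^2}{d^{2B}}\,d\lambda\,dy
 \le\Gamma_M(B)^2\|Q\|_\infty^2 .
\]
The second inequality is the definition \eqref{eq:Gamma-bootstrap} applied pointwise, together with $\sup_x|Q(x,\lambda y)|\le\|Q\|_\infty$.

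The first inequality is the main obstacle. Two points must be handled. First, the power $q_d$ must be converted to the power $2$ outside the integral. Second, the sum over $e$ must be extracted from the coefficient sequence $\bigl(\sum_e\lambda^eA^{d,e}_\beta(y)\bigr)_\beta$ of the degree-$d$ slice.

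For the first point we use Jensen's inequality. Since $2/q_d\ge1$,
\[
 \widehat X_{d,e}^2\le\Bigl(\int\sum_\beta|A^{d,e}_\beta|^{q_d}\Bigr)^{2/q_d}\le\int\Bigl(\sum_\beta|A^{d,e}_\beta|^{q_d}\Bigr)^{2/q_d}.
\]
So it suffices to show, pointwise in $y$,
\[
 \sum_e\Bigl\|\bigl(A^{d,e}_\beta(y)\bigr)_\beta\Bigr\|_{q_d}^2
 \le\int_{\mathbb T}\Bigl\|\Bigl(\sum_e\lambda^eA^{d,e}_\beta(y)\Bigr)_\beta\Bigr\|_{q_d}^2d\lambda .
\]
This is not available from plain Parseval, because the norm is $\ell_{q_d}$ rather than $\ell_2$. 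We therefore plan to absorb $y$ and the $e$-grading into the variables instead. Apply the definition of $\Gamma_M$ directly to $F(x,y)=Q(x,y)$, viewed as a polynomial in the joint variables. Then combine \Cref{lem:hyper} in $y$ with the mixed-norm Minkowski inequality used in \eqref{eq:two-block-Minkowski}, which is valid because $q_d\le2$. This should dominate $\widehat X_{d,e}$ by the $\ell_{q_d}(\beta;L^2(\lambda,y))$ norm of the degree-$d$ coefficients. If this direct inequality fails, the fallback is to prove the estimate for a single fixed $e$ at a time and then sum in $e$. That fallback loses at most a factor polynomial in $M$, which is acceptable only if the later bootstrap tolerates it; we expect the intended argument to be loss-free via the Minkowski step.
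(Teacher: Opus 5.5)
Your proposal has a genuine gap at exactly the step you flag as the main obstacle, and the reduction you make there cannot be completed. You apply \Cref{lem:hyper} to each $e$-homogeneous piece $A^{d,e}_\beta$ separately. That uses up the normalization $(d/(d+1))^{e/2}$ and leaves you in $L^{q_d}$, where distinct $y$-degrees are no longer orthogonal. The loss-free recombination you then need, namely, for vectors $v_e=(v_{e,\beta})_\beta$,
\[
 \sum_e\|v_e\|_{q}^2\le\int_{\mathbb T}\Bigl\|\sum_e\lambda^e v_e\Bigr\|_{q}^2\,d\lambda ,
\]
is false for every $q<2$. Take two coordinates and $v_1=(1,1)$, $v_2=(1,-1)$. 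The left side is $2^{1+2/q}$. The right side is $4\int(|\cos\tfrac\phi2|^q+|\sin\tfrac\phi2|^q)^{2/q}\,\tfrac{d\phi}{2\pi}$, which is strictly less than $2^{1+2/q}$ by the power-mean inequality; for $q=q_1=1$ it is $8$ versus $4+8/\pi$. This configuration is realized at every $y\in\mathbb T$ by $Q(x,y)=(x_1+x_2)y_1+(x_1-x_2)y_1^2$ with $d=1$, so your \emph{pointwise in $y$} sufficient condition fails identically.

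The alternatives you list do not repair this. Applying $\Gamma_M$ to $Q$ in the joint variables controls components graded by $d+e$ at exponent $q_{d+e}$, not the $x$-degree-$d$ slices at exponent $q_d$. Dominating by the $\ell_{q_d}(\beta;L^2)$ norm of the undilated coefficient functions is not enough, because the definition of $\Gamma_M$ only controls $L^2_y(\ell_{q_d}(\beta))$, and by Minkowski that is the smaller of the two mixed norms. Finally, the fixed-$e$ fallback costs a factor of order $M$. That proves a weaker statement and would destroy the absorption of $c_R\Gamma_M(B)$ with $c_R<1$ in \Cref{prop:bootstrap-criterion}.

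The missing idea is to keep the normalization as a dilation and to sum over $e$ while still in $L^2$. With $\lambda_d=\sqrt{d/(d+1)}=\sqrt{q_d/2}$ one has $\widehat X_{d,e}=\bigl[\sum_\beta\|A^{d,e}_\beta(\lambda_d\,\cdot)\|_2^{q_d}\bigr]^{1/q_d}$. Minkowski's inequality (valid since $q_d\le2$, moving $\ell_2(e)$ inside $\ell_{q_d}(\beta)$), followed by Parseval in $y$, gives
\[
 \Bigl(\sum_{e\ge1}\widehat X_{d,e}^2\Bigr)^{1/2}\le\Bigl[\sum_{|\beta|=d}\|B_{d,\beta}(\lambda_d\,\cdot)\|_2^{q_d}\Bigr]^{1/q_d},\qquad B_{d,\beta}:=\sum_{e\ge0}A^{d,e}_\beta .
\]
Now apply Weissler's dilation inequality \eqref{eq:weissler-full}, not its homogeneous corollary, once to the non-homogeneous polynomial $B_{d,\beta}$: this gives $\|B_{d,\beta}(\lambda_d\,\cdot)\|_2\le\|B_{d,\beta}\|_{q_d}$. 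Tonelli turns the right side into $\|g_d\|_{L^{q_d}}\le\|g_d\|_{L^2}$, where $g_d(y)$ is the coefficient $q_d$-norm of the $x$-degree-$d$ part of $x\mapsto Q(x,y)$. Weighting by $d^{-2B}$, summing in $d$, and applying the definition of $\Gamma_M(B)$ for each fixed $y$ finishes the proof. The collision over $e$ that worried you is harmless: it is precisely what $B_{d,\beta}$ absorbs, and no rotation in $\lambda$ is needed.
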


\begin{proof}
By symmetry it suffices to prove the first estimate.  Fix $d$ and put
\[
 \lambda_d:=\sqrt{\frac d{d+1}}=\sqrt{\frac{q_d}{2}}.
\]
For $|\beta|=d$, let
\[
 B_{d,\beta}(y)
 :=
 \sum_{e\ge0}\sum_{|\gamma|=e}
 a^{d,e}_{\beta\gamma}y^\gamma.
\]
Minkowski's inequality, followed by the harmless addition of the $e=0$
term and Parseval in the $y$-variables, gives
\[
 \left(\sum_{e\ge1}\widehat X_{d,e}^2\right)^{1/2}
 \le
 \left[
 \sum_{|\beta|=d}
 \|B_{d,\beta}(\lambda_d\,\cdot)\|_2^{q_d}
 \right]^{1/q_d}.
\]
Weissler's contraction \eqref{eq:weissler-full}, with $p=q_d$, yields
\[
 \|B_{d,\beta}(\lambda_d\,\cdot)\|_2
 \le \|B_{d,\beta}\|_{q_d}.
\]
If
\[
 g_d(y)
 :=
 \left[
 \sum_{|\beta|=d}|B_{d,\beta}(y)|^{q_d}
 \right]^{1/q_d},
\]
then, since $q_d\le2$ and Haar measure is normalized,
\[
 \left(\sum_{e\ge1}\widehat X_{d,e}^2\right)^{1/2}
 \le \|g_d\|_{q_d}\le\|g_d\|_2.
\]
After multiplying by $d^{-B}$, squaring, and summing in $d$,
\[
 \sum_{d,e\ge1}\frac{\widehat X_{d,e}^2}{d^{2B}}
 \le
 \int_{\mathbb T^{n_y}}
 \sum_{d=1}^{M}\frac{g_d(y)^2}{d^{2B}}\,dy.
\]
For fixed $y$, $g_d(y)$ is exactly the coefficient $q_d$-norm of the
degree-$d$ component of $x\mapsto Q(x,y)$.  The definition of
$\Gamma_M(B)$ bounds the integrand by
$\Gamma_M(B)^2\|Q\|_\infty^2$.
\end{proof}

For $0<a<1/2$ and $r\ge1$, define
\begin{equation}\label{eq:bootstrap-central-set}
 \mathcal C_r(a)
 :=
 \left\{
 (d,e)\in\mathbb N^2:
 d+e=r,\quad ar\le d,e\le(1-a)r
 \right\}.
\end{equation}
For $R\ge10$, put
\[
 \kappa_R:=\sup_{r\ge R}(r+1)^{1/(2r)};
\]
then $\kappa_R\downarrow1$.

\begin{lemma}[Central entropy contraction]\label{lem:bootstrap-central}
For every $0<a<1/2$, $B>0$, $M\ge R$, and every analytic polynomial $Q$
of total degree at most $M$,
\begin{align}
 &\left[
 \sum_{r=R}^{M}\frac1{r^{2B}}
 \left(
 \sum_{(d,e)\in\mathcal C_r(a)}
 \|a(Q_{d,e})\|_{q_r}^{q_r}
 \right)^{2/q_r}
 \right]^{1/2}
 \notag\\
 &\hspace{18mm}\le
 \kappa_R\sqrt{2\mathrm e}\,
 \exp\{-(B+1/2)h(a)\}
 \Gamma_M(B)\|Q\|_\infty.
 \label{eq:bootstrap-central}
\end{align}
\end{lemma}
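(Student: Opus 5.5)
The plan is to bound each central bidegree by \Cref{lem:Blei}, pass to the normalized quantities $\widehat X_{d,e},\widehat Y_{d,e}$, pull out the weight $r^{-B}$ as a binary-entropy factor, and close with \Cref{lem:bootstrap-mixed-angle} fed by \Cref{lem:bootstrap-normalized-row-column}.

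\emph{Step 1: one central bidegree.} Fix $r\ge R$ and $(d,e)\in\mathcal C_r(a)$, and put $\theta:=d/r\in[a,1-a]$. Apply \Cref{lem:Blei} to the array of $Q_{d,e}$ with $m=r$:
\[
 \|a(Q_{d,e})\|_{q_r}\le X_{d,e}^{\theta}Y_{d,e}^{1-\theta}.
\]
Substituting $X_{d,e}=(1+1/d)^{e/2}\widehat X_{d,e}$ and $Y_{d,e}=(1+1/e)^{d/2}\widehat Y_{d,e}$ produces the factor
\[
 (1+1/d)^{de/(2r)}(1+1/e)^{de/(2r)}\le \e^{1/2},
\]
exactly as at the end of the proof of \Cref{thm:bihom}. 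Next write
\[
 r^{-B}\widehat X^{\theta}\widehat Y^{1-\theta}
 =\bigl(\widehat X/d^B\bigr)^{\theta}\bigl(\widehat Y/e^B\bigr)^{1-\theta}\,
 \frac{d^{B\theta}e^{B(1-\theta)}}{r^B},
\]
and note that the last factor equals $\exp\{-Bh(\theta)\}$. Since $h$ is increasing on $(0,1/2]$ and symmetric about $1/2$, this is at most $\exp\{-Bh(a)\}$ for $\theta\in[a,1-a]$. Hence
\[
 r^{-B}\|a(Q_{d,e})\|_{q_r}\le \e^{1/2}\exp\{-Bh(a)\}\,\bigl(\widehat X_{d,e}/d^B\bigr)^{\theta}\bigl(\widehat Y_{d,e}/e^B\bigr)^{1-\theta}.
\]

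\emph{Step 2: from $\ell_{q_r}$ to $\ell_2$ over bidegrees.} The set $\mathcal C_r(a)$ has at most $r+1$ elements, and $1/q_r-1/2=1/(2r)$. H\"older's inequality therefore gives
\[
 \Bigl(\sum_{\mathcal C_r(a)}\|a(Q_{d,e})\|_{q_r}^{q_r}\Bigr)^{2/q_r}
 \le (r+1)^{1/r}\sum_{\mathcal C_r(a)}\|a(Q_{d,e})\|_{q_r}^2 .
\]
For $r\ge R$ the factor $(r+1)^{1/r}$ is at most $\kappa_R^2$.

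\emph{Step 3: mixed-angle summation.} Normalize
\[
 u_{d,e}:=\frac{\widehat X_{d,e}^2}{d^{2B}\Gamma_M(B)^2\|Q\|_\infty^2},
 \qquad
 v_{d,e}:=\frac{\widehat Y_{d,e}^2}{e^{2B}\Gamma_M(B)^2\|Q\|_\infty^2}.
\]
By \Cref{lem:bootstrap-normalized-row-column}, $\sum u_{d,e}\le1$ and $\sum v_{d,e}\le1$. Each pair $(d,e)$ occurs only at $r=d+e$, so the double sum over $r$ and $\mathcal C_r(a)$ is a single sum over distinct pairs, each with its angle $\theta_{d,e}\in[a,1-a]$. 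Squaring Step 1 turns the bound into $u^{\theta}v^{1-\theta}$. Then \Cref{lem:bootstrap-mixed-angle} bounds the squared left side of \eqref{eq:bootstrap-central} by
\[
 \kappa_R^2\,\e\,\exp\{-2Bh(a)\}\cdot 2\exp\{-h(a)\}\,\Gamma_M(B)^2\|Q\|_\infty^2 .
\]
Taking square roots gives exactly $\kappa_R\sqrt{2\e}\exp\{-(B+1/2)h(a)\}\Gamma_M(B)\|Q\|_\infty$.

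\emph{Where care is needed.} The argument hinges on Step 1: the weight must be split as $d^{-B\theta}e^{-B(1-\theta)}$ times an entropy factor, so that the row and column sums of \Cref{lem:bootstrap-normalized-row-column} enter with their own weights $d^{-2B}$ and $e^{-2B}$. The remaining check is that the hypercontractive constants cancel exactly against the hat normalization up to $\e^{1/2}$, which holds. Also, the condition $d,e\ge1$ is automatic in $\mathcal C_r(a)$ for $r\ge R\ge10$ unless $ar<1$; those pairs are excluded by the definition of $\mathcal C_r(a)$ as a subset of $\mathbb N^2$.
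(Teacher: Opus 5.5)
Your proposal is correct and follows essentially the same argument as the paper: \Cref{lem:Blei} on each central bidegree, passage to $\widehat X_{d,e},\widehat Y_{d,e}$ at a cost of $\mathrm e^{1/2}$, splitting $r^{-B}$ into the factors $d^{-B\vartheta}e^{-B(1-\vartheta)}$ and $\exp\{-Bh(\vartheta)\}$, then \Cref{lem:bootstrap-mixed-angle} fed by \Cref{lem:bootstrap-normalized-row-column}, and finally the H\"older step with $\#\mathcal C_r(a)\le r+1$. The only difference is that you apply the $\ell_{q_r}\to\ell_2$ H\"older step before the mixed-angle summation rather than after it, which makes no difference.
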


\begin{proof}
Fix $(d,e)\in\mathcal C_r(a)$, put $r=d+e$ and $\vartheta=d/r$.
By \Cref{lem:Blei},
\[
 \|a(Q_{d,e})\|_{q_r}
 \le X_{d,e}^{\vartheta}Y_{d,e}^{1-\vartheta}.
\]
The normalized quantities give
\[
 X_{d,e}^{\vartheta}Y_{d,e}^{1-\vartheta}
 \le
 \mathrm e^{1/2}
 \widehat X_{d,e}^{\vartheta}
 \widehat Y_{d,e}^{1-\vartheta},
\]
while
\[
 \frac{d^{B\vartheta}e^{B(1-\vartheta)}}{r^B}
 =\exp\{-Bh(\vartheta)\}
 \le\exp\{-Bh(a)\}.
\]
Thus, with
\[
 U_{d,e}:=\frac{\widehat X_{d,e}}{d^B},
 \qquad
 V_{d,e}:=\frac{\widehat Y_{d,e}}{e^B},
\]
we have
\[
 \left(\frac{\|a(Q_{d,e})\|_{q_r}}{r^B}\right)^2
 \le
 \mathrm e\,\exp\{-2Bh(a)\}
 U_{d,e}^{2\vartheta}V_{d,e}^{2(1-\vartheta)}.
\]
By \Cref{lem:bootstrap-normalized-row-column}, both square sums of
$U_{d,e}$ and $V_{d,e}$ are at most
$G^2:=\Gamma_M(B)^2\|Q\|_\infty^2$.  Applying
\Cref{lem:bootstrap-mixed-angle} to
$U_{d,e}^2/G^2$ and $V_{d,e}^2/G^2$ gives
\[
 \sum_{r=R}^{M}\sum_{(d,e)\in\mathcal C_r(a)}
 \left(\frac{\|a(Q_{d,e})\|_{q_r}}{r^B}\right)^2
 \le
 2\mathrm e\,\exp\{-(2B+1)h(a)\}
 \Gamma_M(B)^2\|Q\|_\infty^2.
\]
Finally, since $1/q_r-1/2=1/(2r)$ and
$\#\mathcal C_r(a)\le r+1$,
\[
 \left(
 \sum_{(d,e)\in\mathcal C_r(a)}
 \|a(Q_{d,e})\|_{q_r}^{q_r}
 \right)^{1/q_r}
 \le
 (r+1)^{1/(2r)}
 \left(
 \sum_{(d,e)\in\mathcal C_r(a)}
 \|a(Q_{d,e})\|_{q_r}^{2}
 \right)^{1/2}.
\]
Squaring and summing in $r$ proves the lemma.
\end{proof}

\subsection{A general contraction--compression criterion}

Fix $\mu\in(1/2,1)$.  An $r$-homogeneous monomial $z^\alpha$ is
called \emph{$\mu$-diffuse} if
\[
 \max_j\alpha_j\le\mu r,
\]
and \emph{$\mu$-dominant} otherwise.  Write
\[
 F_r=F_r^{\rm diff}+F_r^{\rm cap}
\]
for the corresponding decomposition into disjoint coefficient families.
Under whole-coordinate coloring, every coordinate is sent independently to
$x$ or $y$, with equal probability.  If
\[
 S_\alpha:=\sum_j\alpha_j\xi_j,
 \qquad
 \mathbb P(\xi_j=0)=\mathbb P(\xi_j=1)=\frac12,
\]
then $S_\alpha$ is the resulting $x$-degree.  Exact ancestry again ensures
that distinct parent coefficients never merge.

The complementary regime is treated by an exact compression.  We record the
construction independently of any numerical parameters.

\begin{lemma}[Exact compression of a dominant power]
\label{lem:bootstrap-cap-recovery}
Let $r\ge1$, $0\le k<r/2$, and put $s=k+1$.  Let $F_{r,k}$ be the
coefficient layer consisting of monomials whose unique largest exponent is
$r-k$.  There are random contractive projections producing
$s$-homogeneous polynomials $G_{\omega,c,k}$ such that
\begin{equation}\label{eq:bootstrap-G-norm}
 \|G_{\omega,c,k}\|_\infty\le\|F_r\|_\infty
\end{equation}
and
\begin{equation}\label{eq:bootstrap-cap-mass}
 \frac14\|a(F_{r,k})\|_{q_r}^{q_r}
 \le
 \mathbb E_\omega\sum_{c=1}^{s}
 \|a(G_{\omega,c,k})\|_{q_r}^{q_r}.
\end{equation}
The compression preserves the coefficient array exactly.
\end{lemma}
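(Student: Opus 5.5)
The plan is to build $G_{\omega,c,k}$ in three contractive steps: first isolate the layer $F_{r,k}$ up to a color label, then replace the dominant power by a fresh variable, and finally check that no coefficients collide.

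\emph{Step 1 (coloring).} Color every coordinate independently and uniformly with one of $s=k+1$ colors; call the outcome $\omega$. For a color $c$, substitute $z_j\mapsto t z_j$ when $j$ has color $c$, and leave the other coordinates unchanged. Extracting the Fourier coefficient of $t^{r-k}$, i.e.
\[
 \frac1{2\pi}\int_0^{2\pi}F_r(\dots)\,\e^{-i(r-k)\phi}\,d\phi ,
\]
is an average of isometries. It therefore produces a contractive projection $\Pi_{\omega,c}$ onto the monomials whose total $c$-colored degree equals $r-k$. Take a monomial of $F_{r,k}$ with dominant coordinate $j_*$. It has at most $k$ further active coordinates, carrying total degree $k$. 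With probability at least $(1-1/s)^{k}\ge 1/4$, none of them shares the color of $j_*$, since $(1-1/(k+1))^k\ge 1/\e$. On that event, with $c$ the color of $j_*$, the monomial lies in the range of $\Pi_{\omega,c}$, and its exponent at $j_*$ is exactly $r-k$.

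\emph{Step 2 (removing impostors).} The range of $\Pi_{\omega,c}$ may also contain monomials not in $F_{r,k}$: some whose $c$-degree $r-k$ is spread over several coordinates. I remove these with root-of-unity projections. Because $r-k>r/2$, a monomial in which one coordinate carries exponent exactly $r-k$ is characterized by the requirement that its $c$-colored part is a pure power $z_j^{r-k}$. To enforce this, replace $z_j\mapsto \zeta_j z_j$ for the $c$-colored $j$ and average over the torus $\prod_j \mathbb T$ against the character $\prod_j \zeta_j^{(r-k)\mathbf 1_{j=i}}$, then sum over $i$. This is not obviously contractive. A better route is to apply the substitution $z_j\mapsto w$, with one fresh variable $w$, for all $c$-colored $j$ simultaneously after multiplying by independent phases. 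My first attempt, however, will be the following. After Step 1, the $c$-colored block has degree $r-k$ while the rest has degree $k<r-k$, so I define
\[
 G_{\omega,c,k}(w,z'):=\text{the coefficient of }t^{r-k}\text{ in }F_r\bigl(t w\,e_{(\cdot)},\, z'\bigr),
\]
which is $(r-k)$-homogeneous in the $c$-colored variables and $k$-homogeneous in the uncolored ones. Exploiting that $r-k>k$, I then need a contractive map sending $z_j^{r-k}\mapsto u_j$ that kills the mixed $c$-colored monomials.

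\emph{Step 3 (compression).} Apply the polarization-free substitution $z_j\mapsto \eta_j u_j^{1/(r-k)}$, which is formal. Concretely, the sup norm over the $c$-block of an $(r-k)$-homogeneous polynomial dominates the sup norm of its restriction to the "diagonal" part $\sum_j b_j z_j^{r-k}(\cdot)$, obtained by averaging over $z_j\mapsto \zeta z_j$ with $\zeta$ running over $(r-k)$-th roots of unity independently in each coordinate. That averaging is a contraction, since it is an average of isometries, and it keeps exactly the monomials whose $c$-exponents are all multiples of $r-k$, which means pure powers. Then set $u_j=z_j^{r-k}$. Since $z_j\mapsto z_j^{r-k}$ maps $\mathbb D$ onto $\mathbb D$, the polynomial in $(u,z')$ has the same sup norm. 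It is $(1+k)=s$-homogeneous and carries the same coefficients, with injective labels. This gives \eqref{eq:bootstrap-G-norm} and exact preservation of coefficients.

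\emph{Mass count.} Each monomial of $F_{r,k}$ appears in some $G_{\omega,c,k}$ on an event of probability at least $1/4$, and distinct monomials receive distinct labels. Summing over $c$ and taking expectations yields \eqref{eq:bootstrap-cap-mass}. The main obstacle is to verify that the root-of-unity averaging rejects exactly the non-dominant monomials of the range of $\Pi_{\omega,c}$ and introduces no extra losses. This rests on $r-k>r/2$: the only multiple of $r-k$ in $[1,r-k]$ is $r-k$ itself.
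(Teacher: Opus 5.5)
Your final construction (coloring by $s$ colors, the circle projection onto color-$c$ degree $r-k$, root-of-unity averaging on the color-$c$ coordinates to force a pure power $z_j^{r-k}$, the isometric substitution $u_j=z_j^{r-k}$, and the survival bound $(1-1/s)^k\ge\mathrm e^{-1}>1/4$) is correct and is exactly the paper's argument. The exploratory detours in Step~2 should be deleted, since they are not needed for the proof. In particular, replacing all color-$c$ coordinates by a single fresh variable $w$ would merge distinct dominant coordinates and break exact ancestry.
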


\begin{proof}
Color the coordinates independently and uniformly by
$\{1,\ldots,s\}$.  For a coloring $\omega$ and a color $c$, put
\[
 J_c(\omega):=\{j:\omega(j)=c\}.
\]
The projection onto total color-$c$ degree $r-k$ is
\begin{equation}\label{eq:abstract-color-projection}
 (\mathcal P_{\omega,c,k}H)(z)
 :=
 \int_{\mathbb T}
 H\bigl((t^{\mathbf 1_{J_c(\omega)}(j)}z_j)_j\bigr)
 \overline t^{\,r-k}\,dt.
\end{equation}
Next average independently in every coordinate of color $c$ over the
$(r-k)$th roots of unity:
\begin{equation}\label{eq:abstract-root-projection}
 (\mathcal R_{\omega,c,k}H)(z)
 :=
 \frac1{(r-k)^{|J_c(\omega)|}}
 \sum_{\substack{\zeta_j^{\,r-k}=1\\j\in J_c(\omega)}}H(w),
\end{equation}
where $w_j=\zeta_jz_j$ for $j\in J_c(\omega)$ and $w_j=z_j$ otherwise.
Both operators are averages of isometries and hence contractive.

Set
\[
 R_{\omega,c,k}
 :=\mathcal R_{\omega,c,k}\mathcal P_{\omega,c,k}F_r.
\]
The first projection fixes the total color-$c$ degree at $r-k$; the second
forces each exponent on a color-$c$ coordinate to be a multiple of $r-k$.
Since $r-k>r/2$, exactly one such exponent is nonzero, and it equals $r-k$.
Thus
\[
 R_{\omega,c,k}(z)
 =
 \sum_{j:\omega(j)=c}
 z_j^{r-k}Q_{\omega,c,k,j}(z_{\omega\ne c}),
\]
where each $Q_{\omega,c,k,j}$ is $k$-homogeneous.  Replacing
$z_j^{r-k}$ by a fresh variable $x_j$ gives
\[
 G_{\omega,c,k}
 =
 \sum_{j:\omega(j)=c}
 x_jQ_{\omega,c,k,j}.
\]
The map $z\mapsto z^{r-k}$ sends $\mathbb D$ onto itself, so this replacement
preserves both the coefficient array and the supremum norm.  Contractivity of
\eqref{eq:abstract-color-projection}--\eqref{eq:abstract-root-projection}
proves \eqref{eq:bootstrap-G-norm}.

A parent monomial has residual degree $k=s-1$, hence at most $s-1$
residual active coordinates.  Conditional on the color of its dominant
coordinate, the probability that none receives that color is at least
$(1-1/s)^{s-1}\ge\mathrm e^{-1}>1/4$; for $s=1$ the probability is one.
On this event the coefficient is reproduced with modulus one, and distinct
parents remain distinct: the fresh variable records the dominant coordinate,
while the residual multi-index records all remaining exponents.  Averaging
proves \eqref{eq:bootstrap-cap-mass}.
\end{proof}

\begin{proposition}[Bootstrap criterion]\label{prop:bootstrap-criterion}
Fix
\[
 0<a<\frac12,
 \qquad
 \frac12<\mu<1,
 \qquad
 0<p\le1,
 \qquad
 B>0,
\]
and suppose that every $\mu$-diffuse degree profile satisfies
\begin{equation}\label{eq:abstract-diffuse-capture}
 \mathbb P\{ar\le S_\alpha\le(1-a)r\}\ge p.
\end{equation}
Assume further that
\begin{equation}\label{eq:abstract-diffuse-contraction}
 p^{-1/2}\sqrt{2\mathrm e}\,
 \exp\{-(B+1/2)h(a)\}<1
\end{equation}
and that there exists $\theta\in(1-\mu,1)$ such that
\begin{equation}\label{eq:abstract-dominant-summability}
 B(1-\theta)>\frac32.
\end{equation}
Then
\begin{equation}\label{eq:abstract-uniform-Gamma}
 \sup_{M\ge1}\Gamma_M(B)<\infty.
\end{equation}
Consequently, $D_m\le K_Bm^B$ for some finite constant $K_B$.
\end{proposition}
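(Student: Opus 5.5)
The plan is to prove a self-improving inequality for $\Gamma_M:=\Gamma_M(B)$ of the form
\[
 \Gamma_M\le A+c\,\Gamma_M+E\,\overline\Gamma_{\lfloor\theta M\rfloor}^{\,\theta},\qquad c<1,
\]
with $A,c,E$ independent of $M$, and then close it by absorption and strong induction. Fix $R\ge10$ large, to be chosen below. Let $F=\sum_{r\le M}F_r$ with $\|F\|_\infty=1$, and split the weighted square sum $\sum_{r\le M} r^{-2B}\|a(F_r)\|_{q_r}^2$ into three ranges.

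\emph{Low degrees.} The terms with $r<R$ contribute at most $\bigl(\sum_{r<R}D_r^2\bigr)^{1/2}=:A_0$, since homogeneous projections are contractive.

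\emph{Diffuse part, $r\ge R$.} Write $F_r=F_r^{\rm diff}+F_r^{\rm cap}$. Apply whole-coordinate coloring to all of $F$, producing $Q_\omega=F\circ\Phi_\omega$ with $\|Q_\omega\|_\infty\le1$. Exact ancestry, as in \Cref{lem:recovery}, together with hypothesis \eqref{eq:abstract-diffuse-capture} gives
\[
 p\,\|a(F_r^{\rm diff})\|_{q_r}^{q_r}\le\mathbb E_\omega\sum_{(d,e)\in\mathcal C_r(a)}\|a(Q_{\omega,d,e})\|_{q_r}^{q_r}.
\]
Discarding the children of dominant parents only weakens the right-hand side, so this is harmless. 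Raise both sides to the power $2/q_r\ge1$ and use Jensen to move the expectation outside. Then apply \Cref{lem:bootstrap-central} for each fixed $\omega$. This bounds the diffuse contribution by
\[
 \sup_{r\ge R}p^{-1/q_r}\,\kappa_R\sqrt{2\mathrm e}\,\exp\{-(B+1/2)h(a)\}\,\Gamma_M=:c_R\,\Gamma_M .
\]
Since $p^{-1/q_r}=p^{-1/2-1/(2r)}$ and $\kappa_R\to1$, hypothesis \eqref{eq:abstract-diffuse-contraction} gives $c_R<1$ once $R$ is large.

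\emph{Dominant part, $r\ge R$.} The dominant part decomposes as $F_r^{\rm cap}=\sum_{k<(1-\mu)r}F_{r,k}$, and these are disjoint layers. Apply \Cref{lem:bootstrap-cap-recovery} to each layer, with $s=k+1\le(1-\mu)r+1\le\theta r$ for $r\ge R$; this is where $\theta>1-\mu$ is used. For each $s$-homogeneous $G=G_{\omega,c,k}$ with $\|G\|_\infty\le1$, do not bound $\|a(G)\|_{q_r}$ by $\|a(G)\|_{q_s}$ directly; that wastes too much. Instead interpolate between $q_s$ and $2$. One has $1/q_r=\vartheta/q_s+(1-\vartheta)/2$ with $\vartheta=s/r\le\theta$. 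Log-convexity of $\ell_p$ norms, $\|a(G)\|_2=\|G\|_2\le1$, and $\|a(G)\|_{q_s}\le s^B\overline\Gamma_{\lfloor\theta M\rfloor}$ (the latter because $s\le\theta M$) then give
\[
 \|a(G)\|_{q_r}\le \bigl(s^B\overline\Gamma_{\lfloor\theta M\rfloor}\bigr)^{s/r}\le r^{B\theta}\,\overline\Gamma_{\lfloor\theta M\rfloor}^{\,\theta}.
\]
Summing the $q_r$-powers over at most $r$ colors and at most $r$ values of $k$, and using the factor $4$ from \eqref{eq:bootstrap-cap-mass}, yields $\|a(F_r^{\rm cap})\|_{q_r}\le C\,r^{2/q_r}r^{B\theta}\overline\Gamma_{\lfloor\theta M\rfloor}^{\theta}$ with $r^{2/q_r}\le C'r$. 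Hence
\[
 \sum_{r\ge R}r^{-2B}\|a(F_r^{\rm cap})\|_{q_r}^2\le C''\,\overline\Gamma_{\lfloor\theta M\rfloor}^{2\theta}\sum_r r^{2-2B(1-\theta)},
\]
and the series converges exactly when \eqref{eq:abstract-dominant-summability} holds.

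\emph{Closing the recursion.} The triangle inequality in $\ell_2$ over $r$ gives $\Gamma_M\le A_0+c_R\Gamma_M+E\,\overline\Gamma_{\lfloor\theta M\rfloor}^{\theta}$, and $\Gamma_M<\infty$ for each $M$, so the middle term can be absorbed. Put $L:=\max\{\overline\Gamma_1,\dots,\overline\Gamma_{M_1}\}$ and choose $T\ge L$ with $(A_0+ET^\theta)/(1-c_R)\le T$; such $T$ exists because $\theta<1$. Strong induction on $M$, using the monotonicity of $\Gamma_M$, then shows $\overline\Gamma_M\le T$ for all $M$. Finally, testing \eqref{eq:Gamma-bootstrap} on a homogeneous $F=F_m$ gives $D_m\le m^B\sup_M\Gamma_M$.

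The step I expect to be delicate is the dominant estimate. The naive bound through $D_s$ loses a full factor $\theta^Br^2$, which is not summable. The interpolation between $q_s$ and the Hilbertian norm, whose exponent is exactly $s/r\le\theta$, is what produces both the power $r^{B\theta}$ and the sublinear dependence $\overline\Gamma^{\,\theta}$ that the induction needs.
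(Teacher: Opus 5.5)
Your proposal is correct and follows essentially the same route as the paper's proof. The paper likewise chooses $R$ so that $c_R=\kappa_R\,p^{-1/q_R}\sqrt{2\mathrm e}\,\exp\{-(B+1/2)h(a)\}<1$, bounds the diffuse part via exact ancestry, the $q_r\le 2$ moment step and \Cref{lem:bootstrap-central} applied to the colored polynomial, treats the dominant layers by \Cref{lem:bootstrap-cap-recovery} with log-convex interpolation between $q_s$ and $2$ to obtain the factor $r^{2/q_r-B(1-\theta)}\,\overline\Gamma_{\lfloor\theta M\rfloor}^{\,\theta}$, and closes by absorbing $c_R\Gamma_M$ and strong induction on $M$.
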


\begin{proof}
Choose $R$ so large that
\begin{equation}\label{eq:abstract-cR}
 c_R
 :=
 \kappa_Rp^{-1/q_R}\sqrt{2\mathrm e}\,
 \exp\{-(B+1/2)h(a)\}<1
\end{equation}
and
\[
 (1-\mu)r+1\le\theta r,
 \qquad \lfloor\theta r\rfloor\ge1
 \qquad(r\ge R).
\]

For the diffuse part, let $C_r(\omega)$ be the coefficient $q_r$-norm of
the descendants whose bidegrees lie in $\mathcal C_r(a)$.  Exact ancestry
and \eqref{eq:abstract-diffuse-capture} give
\[
 p\|a(F_r^{\rm diff})\|_{q_r}^{q_r}
 \le \mathbb E_\omega C_r(\omega)^{q_r}.
\]
Since $q_r\le2$,
\[
 \|a(F_r^{\rm diff})\|_{q_r}
 \le
 p^{-1/q_r}\bigl(\mathbb E_\omega C_r(\omega)^2\bigr)^{1/2}.
\]
Multiply by $r^{-B}$, square, sum over $r\ge R$, and apply
\Cref{lem:bootstrap-central} to each colored polynomial.  Since
$p^{-1/q_r}\le p^{-1/q_R}$, we obtain
\begin{equation}\label{eq:abstract-diffuse-estimate}
 \left[
 \sum_{r=R}^{M}
 \frac{\|a(F_r^{\rm diff})\|_{q_r}^{2}}{r^{2B}}
 \right]^{1/2}
 \le c_R\Gamma_M(B)\|F\|_\infty.
\end{equation}

For the dominant part, write the largest exponent as $r-k$ and set
$s=k+1$.  Then $s\le\theta r$ for $r\ge R$.  By the definition of
$\Gamma_s(B)$, Parseval, and the identity
\[
 \frac1{q_r}
 =
 \frac sr\frac1{q_s}
 +\left(1-\frac sr\right)\frac12,
\]
log-convexity gives
\[
 \|a(G_{\omega,c,k})\|_{q_r}
 \le
 \bigl(\Gamma_s(B)s^B\bigr)^{s/r}\|F_r\|_\infty.
\]
Insert this into \eqref{eq:bootstrap-cap-mass} and sum over
$s\le\theta r$.  The layers have disjoint coefficient supports, so
\[
 \|a(F_r^{\rm cap})\|_{q_r}^{q_r}
 \le
 4\|F_r\|_\infty^{q_r}
 \sum_{s\le\theta r}
 s\bigl(\Gamma_s(B)s^B\bigr)^{sq_r/r}.
\]
Put
\[
 \mathcal G_M:=\overline\Gamma_{\lfloor\theta M\rfloor}(B).
\]
Since $s\le\theta r\le\theta M$,
\[
 \Gamma_s(B)^{sq_r/r}\le\mathcal G_M^{\theta q_r},
 \qquad
 s^{Bsq_r/r}\le r^{B\theta q_r}.
\]
Using $\sum_{s\le\theta r}s\le r^2$ and contractivity of the homogeneous
projection, we obtain
\begin{equation}\label{eq:abstract-cap-pointwise}
 \frac{\|a(F_r^{\rm cap})\|_{q_r}}{r^B}
 \le
 C r^{2/q_r-B(1-\theta)}
 \mathcal G_M^\theta\|F\|_\infty.
\end{equation}
The square sum converges because
\[
 \frac4{q_r}-2B(1-\theta)
 \longrightarrow
 2-2B(1-\theta)<-1.
\]
Hence
\begin{equation}\label{eq:abstract-cap-estimate}
 \left[
 \sum_{r=R}^{M}
 \frac{\|a(F_r^{\rm cap})\|_{q_r}^{2}}{r^{2B}}
 \right]^{1/2}
 \le
 C_{a,\mu,B,\theta}
 \overline\Gamma_{\lfloor\theta M\rfloor}(B)^\theta
 \|F\|_\infty.
\end{equation}

The finitely many degrees below $R$ contribute a constant $C_0$.  Combining
\eqref{eq:abstract-diffuse-estimate} and
\eqref{eq:abstract-cap-estimate}, then absorbing the diffuse term, gives
\begin{equation}\label{eq:abstract-final-recurrence}
 \Gamma_M(B)
 \le
 A+E\overline\Gamma_{\lfloor\theta M\rfloor}(B)^\theta
\end{equation}
with finite constants $A,E$.  Since $0<\theta<1$, choose $K\ge1$ larger
than the finitely many initial values and so large that $A+EK^\theta\le K$.
Strong induction in $M$ proves \eqref{eq:abstract-uniform-Gamma}.  Applying
\eqref{eq:Gamma-bootstrap} to a single homogeneous level gives the final
assertion.
\end{proof}

\subsection{Qualitative polynomial growth}

The criterion closes with room to spare using only Chebyshev's inequality.
Set
\begin{equation}\label{eq:bootstrap-elementary-parameters}
 a_0:=\frac1{10},
 \qquad
 \mu_0:=\frac{5001}{10000},
 \qquad
 p_0:=\frac{1399}{6400}.
\end{equation}
If $\max_j\alpha_j\le\mu_0r$, then
\[
 \mathbb ES_\alpha=\frac r2,
 \qquad
 \operatorname{Var}(S_\alpha)
 =\frac14\sum_j\alpha_j^2
 \le\frac{\mu_0r^2}{4}.
\]
The distance from $r/2$ to the complement of $[r/10,9r/10]$ is $2r/5$.
Therefore
\[
 \mathbb P\{S_\alpha\notin[r/10,9r/10]\}
 \le\frac{25\mu_0}{16}
 =\frac{5001}{6400},
\]
which gives capture probability at least $p_0$.

The elementary numerical estimate in
\Cref{prop:bootstrap-elementary-certificates} shows that
\begin{equation}\label{eq:elementary-B-nine-halves}
 p_0^{-1/2}\sqrt{2\mathrm e}\,
 \exp\{-5h(a_0)\}<1.
\end{equation}
Thus \eqref{eq:abstract-diffuse-contraction} holds for every
$B\ge9/2$.  Since $1-\mu_0<1/2$, we may take $\theta=1/2$, and
\eqref{eq:abstract-dominant-summability} holds whenever $B>3$.
Consequently,
\begin{equation}\label{eq:elementary-polynomial-family}
 \sup_M\Gamma_M(B)<\infty
 \qquad\text{for every }B>\frac92.
\end{equation}

\begin{proof}[Proof of \Cref{thm:main}]
Take $B_0=5$ in \eqref{eq:elementary-polynomial-family}.  The weighted
estimate \eqref{eq:weighted-polynomial-main} is exactly
\eqref{eq:Gamma-bootstrap} with a uniform bound for $\Gamma_M(5)$; applying
it to one homogeneous level gives \eqref{eq:polynomial-main}.
\end{proof}

\subsection{Sharpening the polynomial exponent}
\label{sec:sharp-polynomial-exponent}

The preceding proof deliberately ignores the sharp distribution of the
random bidegree.  We now refine it in two steps.  First, an earlier pre-Tomaszewski small-ball estimate already gives every
exponent above $3$.
Then Keller--Klein's proof of Tomaszewski's conjecture determines the exact
threshold of the present parameterized criterion.

We shall use the following sharp variance calculation.

\begin{lemma}[Variance under a coordinate cap]\label{lem:sharp-diffuse-variance}
Let $x_j\ge0$, $\sum_jx_j=1$, and
$\max_jx_j\le\mu$, where $1/2\le\mu<1$.  Then
\begin{equation}\label{eq:sharp-diffuse-variance}
 \sum_jx_j^2\le\mu^2+(1-\mu)^2.
\end{equation}
The bound is sharp.
\end{lemma}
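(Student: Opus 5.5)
The plan is to maximize the convex function $\Phi(x)=\sum_j x_j^2$ over the feasible set $\{x_j\ge0,\ \sum_j x_j=1,\ \max_j x_j\le\mu\}$. For any finite number of coordinates this set is a polytope. A convex function attains its maximum at an extreme point, so it suffices to describe those extreme points.

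An extreme point of $\{0\le x_j\le\mu,\ \sum x_j=1\}$ has at most one coordinate strictly between $0$ and $\mu$. If two coordinates were strictly inside, we could move mass between them in both directions and stay feasible. Hence an extreme point has $k$ coordinates equal to $\mu$, one coordinate equal to $1-k\mu\in[0,\mu)$, and all others zero. Since $\mu\ge1/2$, we need $k\mu\le1$, which forces $k\le2$. If $k=2$, then $\mu=1/2$ and the point is $(1/2,1/2)$, whose value equals $\mu^2+(1-\mu)^2$. If $k=1$, the point is $(\mu,1-\mu)$, with the same value. The case $k=0$ is impossible, because then the single nonzero coordinate would be $1<\mu$. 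This gives \eqref{eq:sharp-diffuse-variance}, and the point $(\mu,1-\mu,0,\dots)$ shows the bound is sharp.

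Countably many coordinates cause no difficulty. Applying the bound to each finite truncation $(x_1,\dots,x_N)$ with the remaining mass $1-\sum_{j\le N}x_j$ appended as one extra coordinate would require that mass to be at most $\mu$, which may fail. So instead I use a direct smoothing argument that avoids this issue. Let $x_1=\max_j x_j$. Then
\[
 \sum_{j\ge2}x_j^2\le x_1\sum_{j\ge2}x_j=x_1(1-x_1).
\]
Therefore $\sum_j x_j^2\le x_1^2+x_1(1-x_1)=x_1$, which is useless near $\mu=1/2$. A better route is to use $\sum_{j\ge2}x_j^2\le(1-x_1)^2$, since the $x_j$ are nonnegative and sum to $1-x_1$. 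This gives
\[
 \sum_j x_j^2\le x_1^2+(1-x_1)^2.
\]
The function $t\mapsto t^2+(1-t)^2$ is increasing on $[1/2,1]$. So if $x_1\ge1/2$, the bound $\le\mu^2+(1-\mu)^2$ follows immediately.

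If $x_1<1/2$, then $\sum_j x_j^2\le x_1\sum_j x_j=x_1<1/2\le\mu^2+(1-\mu)^2$, since the right-hand side is at least $1/2$. This one-line case split makes the extreme-point discussion unnecessary, and it is the argument I would actually write. The only point requiring care is the monotonicity and the lower value $1/2$ of $t^2+(1-t)^2$.
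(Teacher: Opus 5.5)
Your final argument is correct, and it differs from the paper's proof in one respect. Both proofs rely on the bound $\sum_{j\ge2}x_j^2\le\bigl(\sum_{j\ge2}x_j\bigr)^2$ for the non-maximal coordinates. The difference is in how the largest coordinate is brought up to the cap $\mu$.

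The paper uses smoothing. With $x_1$ the largest entry, it transfers mass from smaller positive entries into $x_1$ until $x_1=\mu$. Each transfer changes the sum of squares by $2\delta(x_1-x_j)+2\delta^2\ge0$. It then bounds the squares of the leftover mass $1-\mu$ by $(1-\mu)^2$. This treats every profile uniformly, with no case distinction, and the extremal profile $(\mu,1-\mu,0,\ldots)$ appears as the end point of the process.

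You instead keep $x_1$ fixed and get $\sum_jx_j^2\le g(x_1)$ with $g(t)=t^2+(1-t)^2$. You then need a case split, because $g$ decreases on $[0,1/2]$. For $x_1\ge1/2$ you use monotonicity of $g$ on $[1/2,\mu]$. For $x_1<1/2$ you use the cruder bound $\sum_jx_j^2\le x_1<1/2\le g(\mu)$.

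Your version needs no iterative procedure and works unchanged for countably many coordinates, since the maximum exists because $x_j\to0$. The cost is the extra case. The paper's version avoids the case split but uses a repeated mass-transfer step. Your sharpness example is the same as the paper's.

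Your extreme-point argument is also valid for finitely many coordinates. That is all the application needs, since degree profiles have finite support, but the direct argument makes it unnecessary. One exploratory remark of yours is backwards. The bound $\sum_jx_j^2\le x_1\le\mu$ is exact at $\mu=1/2$, not useless there. It is weak only in the interior of $(1/2,1)$, since $\mu-g(\mu)=(2\mu-1)(1-\mu)$. This does not affect the proof you would actually write.
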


\begin{proof}
Reorder the coordinates so that $x_1\ge x_2\ge\cdots$.  If
$x_1<\mu$, transfer mass from a positive coordinate $x_j$, $j\ge2$, to
$x_1$ until either $x_1=\mu$ or $x_j=0$.  Such a transfer does not decrease
the sum of squares: for $0\le\delta\le\min\{\mu-x_1,x_j\}$,
\[
 (x_1+\delta)^2+(x_j-\delta)^2-x_1^2-x_j^2
 =2\delta(x_1-x_j)+2\delta^2\ge0.
\]
Repeating this operation produces a vector with first coordinate $\mu$ and
remaining mass $1-\mu$ without decreasing the objective.  The sum of the
squares of the remaining coordinates is at most the square of their sum.
Therefore
\[
 \sum_jx_j^2\le\mu^2+(1-\mu)^2.
\]
Equality occurs for the profile $(\mu,1-\mu,0,\ldots)$.
\end{proof}

\begin{proposition}[Every exponent above three]\label{prop:three-plus-epsilon}
For every $\varepsilon>0$ there is $K_\varepsilon<\infty$ such that
\begin{equation}\label{eq:three-plus-epsilon}
 D_m\le K_\varepsilon m^{3+\varepsilon}
 \qquad(m\ge1).
\end{equation}
\end{proposition}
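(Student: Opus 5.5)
The plan is to apply \Cref{prop:bootstrap-criterion} with parameters chosen so that, for a given $B>3$, both \eqref{eq:abstract-diffuse-contraction} and \eqref{eq:abstract-dominant-summability} hold simultaneously. The only weak link in the elementary closure is the diffuse condition: Chebyshev gives a capture probability $p$ that is too small to offset $\sqrt{2\mathrm e}$ unless $B\ge 9/2$. The dominant condition already holds for every $B>3$ once $\theta$ can be taken close to $1/2$, i.e.\ once $\mu$ is close to $1/2$. So I would fix $B=3+\varepsilon$, take $\mu$ slightly above $1/2$ and $\theta\in(1-\mu,1)$ slightly above $1/2$ so that $B(1-\theta)>3/2$, and then arrange the diffuse contraction by pushing the window parameter $a$ toward $1/2$. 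As $a\uparrow1/2$, $h(a)\to\log 2$, and then $\exp\{-(B+1/2)h(a)\}\to 2^{-(B+1/2)}\le 2^{-7/2}$.

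The task therefore reduces to showing that the capture probability $p$ stays bounded below uniformly for some $a$ close enough to $1/2$. This is where the small-ball estimate of Boppana--Holzman \cite[Theorem~4]{BoppanaHolzman} enters. I would write $S_\alpha-r/2=\tfrac12\sum_j\alpha_j\varepsilon_j$ with Rademacher signs $\varepsilon_j$. Normalizing by $\sigma=\tfrac12(\sum\alpha_j^2)^{1/2}$, \Cref{lem:sharp-diffuse-variance} gives $\sigma\le \tfrac r2\sqrt{\mu^2+(1-\mu)^2}$, which is close to $r/(2\sqrt2)$ when $\mu\approx1/2$. The window $ar\le S_\alpha\le(1-a)r$ is $|S_\alpha-r/2|\le(1/2-a)r$. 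This contains the event $|\sum\alpha_j\varepsilon_j|\le t\,\|\alpha\|_2$ with $t=(1-2a)/\sqrt{\mu^2+(1-\mu)^2}$, and $t$ is close to $\sqrt2\,(1-2a)$.

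The main obstacle is precisely this tension. As $a\to1/2$ the window shrinks to size $o(r)$, and a small-ball estimate at a vanishing number of standard deviations gives no uniform lower bound, because lattice atoms can make $\mathbb P(|\sum\alpha_j\varepsilon_j|\le t\|\alpha\|_2)$ small for tiny $t$. So I would not send $a\to1/2$. Instead I would fix $t$ at a fixed fraction of a standard deviation and use a Boppana--Holzman type bound $\mathbb P(|\sum x_j\varepsilon_j|\le\|x\|_2)\ge c_0$, with their explicit constant, or its version at a fixed fraction of $\|x\|_2$. The condition \eqref{eq:abstract-diffuse-contraction} then becomes a numerical inequality, $c_0^{-1/2}\sqrt{2\mathrm e}\exp\{-(B+1/2)h(a)\}<1$, with $a=1/2-\tfrac t2\sqrt{\mu^2+(1-\mu)^2}$.

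Finally I would check that this inequality holds for all $B>3$ once $\mu\downarrow1/2$, and for $B=3+\varepsilon$ choose $\mu$ and $\theta$ accordingly. If the check fails at $B$ just above $3$, the fallback is to raise $B$ and observe that the diffuse threshold it forces lies below $3$. In that case the binding constraint is the dominant one, $B>3/(2(1-\theta))\to3$. The main work is this numerical comparison, carried out in the style of the appendix certificates. Once it is done, \Cref{prop:bootstrap-criterion} yields $\sup_M\Gamma_M(3+\varepsilon)<\infty$, and hence \eqref{eq:three-plus-epsilon}.
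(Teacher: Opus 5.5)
Your main line is the paper's proof: the bootstrap criterion with $\theta$ at (or just above) $1/2$, the capped-variance lemma, and the Boppana--Holzman bound $13/32$ at exactly one standard deviation (the paper fixes $a=7/50$, $\mu=29/50$ rather than letting $\mu\downarrow1/2$), and the numerical check you defer does succeed, since at $B=3$ one needs only $h(a)>\frac17\log\frac{64\mathrm e}{13}\approx0.371$, whereas $h(7/50)>\frac25$. Drop the hedges, though: no uniform small-ball bound is available at small fixed fractions $t<1$ of a standard deviation (three equal coordinates give probability $0$ once $t<1/\sqrt3$), and the conditional fallback is moot, because with $p=13/32$ the diffuse threshold is only about $2.7$, so it is the dominant condition $B(1-\theta)>3/2$ with $\theta$ near $1/2$ that fixes the exponent at $3$.
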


\begin{proof}
Choose
\[
 a_1:=\frac7{50},
 \qquad
 \mu_1:=\frac{29}{50},
 \qquad
 p_1:=\frac{13}{32}.
\]
If $\max_j\alpha_j\le\mu_1r$, then
\Cref{lem:sharp-diffuse-variance} gives
\[
 \frac1{r^2}\sum_j\alpha_j^2
 \le
 \mu_1^2+(1-\mu_1)^2
 =\frac{641}{1250}
 <\frac{648}{1250}
 =(1-2a_1)^2.
\]
Hence the Rademacher sum
$X_\alpha=\sum_j\alpha_j\varepsilon_j$ has standard deviation
$\sigma_\alpha<(1-2a_1)r$.  Apply Boppana--Holzman's estimate
\cite[Theorem~4]{BoppanaHolzman} to the normalized sum
$X_\alpha/\sigma_\alpha$.  Since that theorem gives probability strictly
larger than $13/32$ inside one standard deviation,
\[
 \mathbb P\{|X_\alpha|\le(1-2a_1)r\}\ge p_1.
\]
Equivalently, the random $x$-degree lies in
$[a_1r,(1-a_1)r]$ with probability at least $p_1$.

The second estimate in
\Cref{prop:bootstrap-elementary-certificates} states that
\[
 p_1^{-1/2}\sqrt{2\mathrm e}\,
 \exp\{-7h(a_1)/2\}<1.
\]
Thus the diffuse contraction already holds at $B=3$.  For
$B=3+\varepsilon$, take $\theta=1/2$; since
$1-\mu_1=21/50<1/2$ and $B(1-\theta)>3/2$, the bootstrap criterion applies.
\end{proof}

We now use the sharp probability $1/2$ furnished by Keller--Klein.  Set
\begin{equation}\label{eq:sharp-a-range}
 a_{\max}:=\frac12\left(1-\frac1{\sqrt2}\right)
\end{equation}
and, for $0<a<a_{\max}$, define
\begin{equation}\label{eq:sharp-mu-of-a}
 \mu(a)
 :=
 \frac{1+\sqrt{2(1-2a)^2-1}}{2}.
\end{equation}
Thus
\begin{equation}\label{eq:sharp-variance-balance}
 \mu(a)^2+(1-\mu(a))^2=(1-2a)^2.
\end{equation}
Introduce the two threshold functions
\begin{equation}\label{eq:sharp-threshold-functions}
 B_{\rm diff}(a)
 :=
 \frac{\log(2\sqrt{\mathrm e})}{h(a)}-\frac12,
 \qquad
 B_{\rm cap}(a)
 :=
 \frac{3}{2\mu(a)},
\end{equation}
and define
\begin{equation}\label{eq:beta-star-definition}
 \beta_\star
 :=
 \inf_{0<a<a_{\max}}
 \max\{B_{\rm diff}(a),B_{\rm cap}(a)\}.
\end{equation}
Since $h'(a)=\log((1-a)/a)>0$ on $(0,1/2)$,
$B_{\rm diff}$ is strictly decreasing.  The function $\mu(a)$ is strictly
decreasing, and hence $B_{\rm cap}$ is strictly increasing.  Moreover,
$B_{\rm diff}(a)\to\infty$ as $a\downarrow0$, while, as
$a\uparrow a_{\max}$, one has $B_{\rm diff}(a)\to2.3647\ldots<3$ and
$B_{\rm cap}(a)\to3$.  The infimum is
therefore attained at the unique intersection of the two threshold functions.
Numerically,
\begin{equation}\label{eq:beta-star-numerical}
 \begin{aligned}
 a_\star&=0.138342307566\ldots,\\
 \mu(a_\star)&=0.607668811608\ldots,\\
 \beta_\star&=2.468449871619\ldots.
 \end{aligned}
\end{equation}
The displayed values are included for orientation; the rigorous
consequences below do not depend on their decimal precision.  In
particular, the strict inequality $\beta_\star<5/2$ has the following exact
certificate.  Take $a=11/80$.  Then
\[
 \left(\frac{49}{80}\right)^2
 +\left(\frac{31}{80}\right)^2
 =\frac{1681}{3200}
 <\frac{1682}{3200}
 =\left(1-2a\right)^2.
\]
Since $x\mapsto x^2+(1-x)^2$ is strictly increasing on $[1/2,1]$, this
implies $\mu(a)>49/80$.  Moreover,
\Cref{prop:bootstrap-elementary-certificates} gives $h(a)>2/5$, and the
same elementary estimates give
\[
 \log(2\sqrt{\mathrm e})
 =\log2+\frac12
 <\frac{347}{500}+\frac12
 =\frac{597}{500}
 <\frac65.
\]
Consequently,
\[
 B_{\rm diff}(a)<\frac52,
 \qquad
 B_{\rm cap}(a)<\frac{120}{49}<\frac52,
\]
and hence $\beta_\star<5/2$.

\begin{theorem}[Sharp threshold for the present two-regime bootstrap]
\label{thm:sharp-polynomial-exponent}
For every $B>\beta_\star$ there is $K_B<\infty$ such that
\begin{equation}\label{eq:sharp-polynomial-bound}
 D_m\le K_Bm^B
 \qquad(m\ge1).
\end{equation}
Equivalently, for every $\varepsilon>0$ there is
$K_\varepsilon<\infty$ such that
\begin{equation}\label{eq:beta-plus-epsilon}
 D_m\le K_\varepsilon m^{\beta_\star+\varepsilon}
 \qquad(m\ge1).
\end{equation}
In particular, $D_m=\mathrm{o}(m^{5/2})$.

Moreover, $\beta_\star$ is the infimum of the exponents furnished by the
criterion of \Cref{prop:bootstrap-criterion} when diffuse capture is
supplied only by the one-standard-deviation estimate in Keller--Klein's
\cite[Theorem~1.2]{KellerKlein} and dominant layers are summed as in
\eqref{eq:abstract-cap-pointwise}.
\end{theorem}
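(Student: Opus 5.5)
The plan is to feed \Cref{prop:bootstrap-criterion} with parameters chosen near the intersection point $a_\star$. Fix $B>\beta_\star$. Since $B_{\rm diff}$ is continuous and strictly decreasing and $B_{\rm cap}$ is continuous and strictly increasing on $(0,a_{\max})$, there is $a\in(0,a_{\max})$ with
\[
 \max\{B_{\rm diff}(a),B_{\rm cap}(a)\}<B.
\]
Because $B_{\rm cap}$ is continuous and the strict inequality is open, we may shrink $\mu$ slightly: choose $\mu\in(1/2,\mu(a))$ with $3/(2\mu)<B$. Then take $p=1/2$ and choose $\theta\in(1-\mu,1)$ so close to $1-\mu$ that $B(1-\theta)>3/2$. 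This is possible because $B\mu>3/2$.

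\emph{Diffuse capture.} Let $\alpha$ be $\mu$-diffuse of degree $r$. By \Cref{lem:sharp-diffuse-variance} applied to $x_j=\alpha_j/r$,
\[
 \sigma_\alpha^2=\sum_j\alpha_j^2\le(\mu^2+(1-\mu)^2)r^2<(1-2a)^2r^2,
\]
using \eqref{eq:sharp-variance-balance} and the monotonicity of $x\mapsto x^2+(1-x)^2$ on $[1/2,1]$ (note $\mu<\mu(a)$). Keller--Klein \cite[Theorem~1.2]{KellerKlein} gives
\[
 \mathbb P\{|X_\alpha|\le\sigma_\alpha\}\ge\tfrac12 ,
\]
and $S_\alpha=(r+X_\alpha)/2$. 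Hence $\mathbb P\{ar\le S_\alpha\le(1-a)r\}\ge1/2$, which is \eqref{eq:abstract-diffuse-capture} with $p=1/2$.

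\emph{Contraction condition.} Condition \eqref{eq:abstract-diffuse-contraction} with $p=1/2$ reads
\[
 2\sqrt{\mathrm e}\,\exp\{-(B+1/2)h(a)\}<1,
\]
that is, $B>B_{\rm diff}(a)$, which holds by the choice of $a$. The dominant summability \eqref{eq:abstract-dominant-summability} holds by the choice of $\theta$. The criterion therefore yields $D_m\le K_Bm^B$. The $\varepsilon$-form is immediate, and $D_m=o(m^{5/2})$ follows by applying the bound with some $B\in(\beta_\star,5/2)$, using the certified inequality $\beta_\star<5/2$.

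\emph{Sharpness of the threshold within the criterion.} Suppose diffuse capture comes only from the one-standard-deviation bound. Then capture probability $1/2$ is available only when the worst-case standard deviation satisfies
\[
 \mu^2+(1-\mu)^2\le(1-2a)^2 .
\]
This inequality is sharp by the extremal profile $(\mu,1-\mu)$ in \Cref{lem:sharp-diffuse-variance}, and it forces $\mu\le\mu(a)$. Condition \eqref{eq:abstract-diffuse-contraction} with $p=1/2$ is exactly $B>B_{\rm diff}(a)$. Summing the dominant layers as in \eqref{eq:abstract-cap-pointwise} requires $\theta>1-\mu$ and $B(1-\theta)>3/2$, hence $B>3/(2\mu)\ge B_{\rm cap}(a)$. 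Any admissible $B$ therefore exceeds
\[
 \max\{B_{\rm diff}(a),B_{\rm cap}(a)\}\ge\beta_\star .
\]

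The main obstacle is making this optimality statement precise, namely pinning down what ``furnished by the criterion'' allows. Two points need care. First, one must check that $p=1/2$ is the only probability the Keller--Klein estimate supplies. Second, one must handle the fact that the criterion is stated with strict inequalities, so the infimum $\beta_\star$ is approached but not attained. The existence part is routine once the parameters are fixed by continuity.
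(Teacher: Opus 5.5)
Your proof is correct and follows the paper's argument: you pick $a\in(0,a_{\max})$ with $\max\{B_{\rm diff}(a),B_{\rm cap}(a)\}<B$, obtain capture with $p=1/2$ from \Cref{lem:sharp-diffuse-variance} and Keller--Klein, choose $\theta\in(1-\mu,1-3/(2B))$, and apply \Cref{prop:bootstrap-criterion}; shrinking $\mu$ below $\mu(a)$ is harmless but unnecessary, since $\sigma_\alpha\le(1-2a)r$ already places the event $\{|X_\alpha|\le\sigma_\alpha\}$ inside the window. The first point you flag in the optimality part is settled in the paper by the profile $\alpha=(r/2,r/2,0,\ldots)$, which is $\mu$-diffuse for every $\mu>1/2$ and whose $x$-degree equals $0$, $r/2$, $r$ with probabilities $1/4,1/2,1/4$, so no uniform capture probability above $1/2$ is available in any window $[ar,(1-a)r]$; your second point only reflects that $\beta_\star$ is an infimum rather than a minimum.
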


\begin{proof}
Fix $B>\beta_\star$.  Choose $a\in(0,a_{\max})$ so that
\[
 B>B_{\rm diff}(a),
 \qquad
 B>B_{\rm cap}(a),
\]
and put $\mu=\mu(a)$.  If $\max_j\alpha_j\le\mu r$, then
\Cref{lem:sharp-diffuse-variance} and
\eqref{eq:sharp-variance-balance} give
\[
 \sigma_\alpha^2:=\sum_j\alpha_j^2
 \le(1-2a)^2r^2.
\]
Keller--Klein's theorem, applied to
$X_\alpha/\sigma_\alpha$, yields
\[
 \mathbb P\{|X_\alpha|\le\sigma_\alpha\}\ge\frac12,
\]
and hence central capture with $p=1/2$.  The inequality $B>B_{\rm diff}(a)$ is
exactly
\[
 2\sqrt{\mathrm e}\,
 \exp\{-(B+1/2)h(a)\}<1,
\]
so the diffuse term is strictly contractive.

The inequality $B>B_{\rm cap}(a)$ is equivalent to $B\mu>3/2$.  We may therefore
choose
\[
 1-\mu<\theta<1-\frac{3}{2B}.
\]
Then $B(1-\theta)>3/2$, and
\Cref{prop:bootstrap-criterion} gives \eqref{eq:sharp-polynomial-bound}.

It remains to justify the final optimality statement.  For every $a>0$ and every even $r$, the diffuse class contains the
two-coordinate profile $(1/2,1/2,0,\ldots)$, interpreted as the degree
profile $\alpha=(r/2,r/2,0,\ldots)$.  Its random $x$-degree takes the values $0$, $r/2$, and
$r$, and therefore belongs to $[ar,(1-a)r]$ with probability exactly $1/2$.
Thus no uniform capture probability larger than $1/2$ is available in this
one-window scheme.  With $p=1/2$, diffuse absorption through
\Cref{lem:bootstrap-central} requires $B\ge B_{\rm diff}(a)$.

The variance estimate of \Cref{lem:sharp-diffuse-variance} is sharp for the
normalized profile $(\mu,1-\mu,0,\ldots)$; integer degree profiles
approximate it arbitrarily closely as $r\to\infty$.  Hence $\mu(a)$ is the largest coordinate
cap for which the one-standard-deviation theorem alone guarantees capture in
the window $[ar,(1-a)r]$.  Finally, the square summability in
\eqref{eq:abstract-cap-pointwise} requires $B\mu(a)\ge3/2$, or
$B\ge B_{\rm cap}(a)$.  Minimizing the larger of these two necessary
thresholds gives exactly \eqref{eq:beta-star-definition}.
\end{proof}

\begin{remark}[Scope of the optimized exponent]
The number $\beta_\star$ is not asserted to be an intrinsic lower barrier for
all variants of the method, much less for the optimal Bohnenblust--Hille
constants.  It is the exact threshold of the specific closure just analyzed:
one central two-block window, one-standard-deviation Rademacher capture, one
dominant-power compression, and the present summation over dominant layers.
Improving any of these four ingredients may lower the exponent further.
\end{remark}

\part{Lower bounds and critical dimension}

The lower theory is governed by a simple tension. Tensor powers create
coefficient entropy, while radial growth limits how many tensor factors
can be retained before homogenization. The quotient of these two
quantities gives the natural efficiency of an inner-function seed.
We first isolate this principle, then use it to identify the exact
dimensional threshold for asymptotic contractivity and to produce an
explicit gap from one.

We retain the notation $D_{m,n}$ introduced in
\eqref{eq:Dmn-intro}. Adjoining an inactive variable shows that
\[
D_{m,n}\leq D_{m,n+1},
\qquad m,n\geq 1.
\]

\section{Entropy, radial growth, and homogenization}
\label{sec:lower-principle}

Let
\[
 P(z)=\sum_{|\alpha|=m}a_\alpha z^\alpha\ne0,
 \qquad
 \|P\|_2^2:=\sum_{|\alpha|=m}|a_\alpha|^2,
\]
and normalize its squared coefficients by
\[
 p_\alpha:=\frac{|a_\alpha|^2}{\|P\|_2^2}.
\]
For $0<s<1$, write
\[
 H_s(p):=\frac1{1-s}\log\sum_\alpha p_\alpha^s
\]
for the R\'enyi entropy of $p$.

\begin{lemma}[The coefficient--entropy identity]\label{lem:renyi}
Let $s_m=m/(m+1)$.  Every nonzero $m$-homogeneous polynomial satisfies
\begin{equation}\label{eq:renyi-identity}
 \frac{\|a(P)\|_{q_m}}{\|P\|_2}
 =
 \exp\!\left(\frac{H_{s_m}(p)}{2m}\right).
\end{equation}
Consequently,
\begin{equation}\label{eq:renyi-BH-ratio}
 \log\frac{\|a(P)\|_{q_m}}{\|P\|_\infty}
 =
 \frac{H_{s_m}(p)}{2m}
 -\log\frac{\|P\|_\infty}{\|P\|_2}.
\end{equation}
\end{lemma}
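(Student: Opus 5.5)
This is a direct computation; the plan is to rewrite the $q_m$-norm of the coefficients in terms of the probability vector $p$ and compare exponents. By homogeneity of both sides in $P$, I normalize $\|P\|_2=1$, so that $|a_\alpha|=p_\alpha^{1/2}$. Then
\[
 \|a(P)\|_{q_m}^{q_m}=\sum_\alpha|a_\alpha|^{q_m}=\sum_\alpha p_\alpha^{q_m/2}.
\]
The key observation is that $q_m/2=m/(m+1)=s_m$. Hence the sum is exactly $\sum_\alpha p_\alpha^{s_m}$, which equals $\exp\{(1-s_m)H_{s_m}(p)\}$ by the definition of Rényi entropy.

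Taking the $1/q_m$ power gives
\[
 \|a(P)\|_{q_m}=\exp\Bigl\{\frac{1-s_m}{q_m}H_{s_m}(p)\Bigr\}.
\]
The exponent simplifies cleanly: $1-s_m=1/(m+1)$ and $q_m=2m/(m+1)$, so $(1-s_m)/q_m=1/(2m)$. Undoing the normalization, i.e.\ dividing a general $P$ by $\|P\|_2$, which leaves $p$ unchanged, yields the identity \eqref{eq:renyi-identity}.

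The consequence \eqref{eq:renyi-BH-ratio} follows by writing
\[
 \log\frac{\|a(P)\|_{q_m}}{\|P\|_\infty}=\log\frac{\|a(P)\|_{q_m}}{\|P\|_2}-\log\frac{\|P\|_\infty}{\|P\|_2}
\]
and inserting the first identity. Here $\|P\|_2$ denotes the coefficient $\ell_2$ norm, which by Parseval is also the $L^2(\mathbb T^n)$ norm. Since $P\ne0$, both $\|P\|_\infty$ and $\|P\|_2$ are positive, so all logarithms are defined.

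There is no real obstacle here. The only step that needs care is the exponent bookkeeping: checking that $q_m/2=s_m$ and that $(1-s_m)/q_m=1/(2m)$.
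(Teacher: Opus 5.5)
Your proof is correct and is essentially the paper's argument: both use $q_m=2s_m$ to write $\sum_\alpha|a_\alpha|^{q_m}=\|P\|_2^{q_m}\sum_\alpha p_\alpha^{s_m}$, check that $(1-s_m)/q_m=1/(2m)$, and obtain the second identity by subtracting $\log(\|P\|_\infty/\|P\|_2)$. Normalizing $\|P\|_2=1$ first, as you do, is only a cosmetic difference.
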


\begin{proof}
Since $q_m=2s_m$,
\[
 \sum_\alpha |a_\alpha|^{q_m}
 =\|P\|_2^{q_m}\sum_\alpha p_\alpha^{s_m}.
\]
Now $1/q_m=(m+1)/(2m)$ and
$H_{s_m}(p)=(m+1)\log\sum_\alpha p_\alpha^{s_m}$, which gives
\eqref{eq:renyi-identity}.  Dividing by $\|P\|_\infty$ yields
\eqref{eq:renyi-BH-ratio}.
\end{proof}

Let $\Phi$ be a scalar rational inner function on $\mathbb D^d$, analytic on
a neighbourhood of $\overline{\mathbb D}^d$, and write
\[
 \Phi(z)=\sum_{\alpha\in\mathbb N_0^d}c_\alpha z^\alpha.
\]
Since $|\Phi|=1$ on $\mathbb T^d$, Parseval gives
$\sum_\alpha|c_\alpha|^2=1$.  We associate with $\Phi$ its coefficient
entropy
\begin{equation}\label{eq:seed-entropy}
 h(\Phi):=-\sum_\alpha |c_\alpha|^2\log|c_\alpha|^2
\end{equation}
and its radial index
\begin{equation}\label{eq:seed-radial-index}
 \Lambda(\Phi):=
 \limsup_{u\downarrow1}
 \frac{\log\|\Phi(u\,\cdot)\|_{L^\infty(\mathbb T^d)}}{\log u}.
\end{equation}
As usual, $0\log0=0$.

\begin{lemma}[Radial nondegeneracy]\label{lem:radial-index-positive}
If $\Phi$ is nonconstant, then
\[
 0<\Lambda(\Phi)<\infty.
\]
\end{lemma}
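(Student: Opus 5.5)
We must show $0<\Lambda(\Phi)<\infty$ for a nonconstant rational inner $\Phi$ on $\mathbb D^d$ that is analytic near $\overline{\mathbb D}^d$. Throughout, write $M(u):=\|\Phi(u\,\cdot)\|_{L^\infty(\mathbb T^d)}$.

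\emph{Finiteness.} Since $\Phi$ is analytic on a neighbourhood of $\overline{\mathbb D}^d$, there is $u_0>1$ such that $\Phi$ is holomorphic on a neighbourhood of $u_0\overline{\mathbb D}^d$. Hence its Taylor coefficients satisfy the Cauchy estimate $|c_\alpha|\le C u_0^{-|\alpha|}$. Set $n:=|\alpha|$ and fix $1<u<u_1<u_0$. Then
\[
 M(u)\le\sum_\alpha|c_\alpha|u^{|\alpha|}.
\]
Compare this with $1=\|\Phi\|_{L^\infty(\mathbb T^d)}$, for which we only know $\sum_\alpha|c_\alpha|$ may exceed $1$. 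So a cleaner route is needed: we should bound $\log M(u)$ linearly in $u-1$.

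For this, take $\log M(u)$ as a function of $t=\log u$. The function $z\mapsto\Phi(e^{t}z)$, with $t$ complex in a small strip, together with Hadamard's three-circles (three-lines) theorem in the variable $t$ on the torus sup-norm, shows that $t\mapsto\log M(e^t)$ is convex on $(-\infty,\log u_0)$. It is also finite there, and it equals $0$ at $t=0$ since $|\Phi|=1$ on $\mathbb T^d$. A finite convex function has finite one-sided derivatives at the interior point $t=0$. Consequently
\[
 \limsup_{t\downarrow0}\frac{\log M(e^t)}{t}=\partial_t^+\log M(e^t)\big|_{t=0}<\infty,
\]
and this limsup is exactly $\Lambda(\Phi)$. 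Convexity itself comes from applying the maximum principle to $\Phi(e^{w}z)e^{-\lambda w}$ for a fixed $z\in\mathbb T^d$ and then taking suprema; equivalently, $\log M(e^t)$ is a supremum of the subharmonic, $\operatorname{Im}$-independent functions $t\mapsto\log|\Phi(e^tz)|$ over $z\in\mathbb T^d$.

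\emph{Positivity.} By convexity, $\Lambda(\Phi)\ge$ the left derivative at $0$. The left side is governed by Schwarz's lemma: for $u<1$ we have $M(u)\le1$. A better tool is needed, though, because the left derivative could be $0$. Instead I argue directly.

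Since $\Phi$ is nonconstant, pick $z_0\in\mathbb T^d$ such that the radial derivative $\frac{d}{dt}\Phi(e^tz_0)|_{t=0}=\sum_j z_{0,j}\partial_j\Phi(z_0)\neq0$. Such a point exists: otherwise the Euler-type operator $\sum z_j\partial_j$ annihilates $\Phi$ on $\mathbb T^d$, hence everywhere by analyticity, and then all $c_\alpha$ with $|\alpha|\ge1$ vanish. Now consider the one-variable function $g(\lambda):=\Phi(\lambda z_0)$, which is holomorphic near $\overline{\mathbb D}$ and satisfies $|g|\le1$ on $\mathbb D$, $|g(1)|=1$, and $g'(1)\neq0$. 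By the Julia--Carathéodory (Hopf boundary) lemma, $\partial_r\log|g(r)|$ at $r=1$ equals $|g'(1)|>0$; more precisely, it equals the angular derivative, which is positive for a nonconstant self-map touching the circle. Therefore
\[
 \log M(u)\ge\log|g(u)|=|g'(1)|(u-1)+o(u-1),
\]
which gives $\Lambda(\Phi)\ge|g'(1)|>0$.

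The main obstacle is the positivity step, namely guaranteeing that the radial derivative of $\log|g|$ at $1$ is nonzero rather than merely that $g'(1)\neq0$. Since $|g|=1$ on the circle near $1$, the tangential derivative of $\log|g|$ vanishes, so $|g'(1)|=|\partial_r\log|g||$. Because $|g|\le1$ inside the disc, the sign is forced to be nonnegative, and Hopf's lemma applied to the harmonic function $\log|g|$ near $1$ (zero-free there) upgrades this to a strict inequality.
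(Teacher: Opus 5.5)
Your proof is correct, but both halves take a different route from the paper's. For finiteness, the paper only uses that the first derivatives of $\Phi$ are bounded on a slightly larger polydisc, so $\|\Phi(u\,\cdot)\|_\infty\le 1+C(u-1)$. Your Hadamard argument is heavier but shows more: $t\mapsto\log M(e^t)$ is convex, so the $\limsup$ defining $\Lambda(\Phi)$ is an honest right derivative. (Three small corrections there. The slice functions $w\mapsto\log|\Phi(e^wz)|$ are not independent of $\operatorname{Im}w$; only their supremum over $z\in\mathbb T^d$ is, because the diagonal rotations $z\mapsto e^{is}z$ preserve $\mathbb T^d$. Finiteness of $\log M(e^t)$ uses $M(e^t)\ge\|\Phi(e^t\,\cdot)\|_{L^2(\mathbb T^d)}>0$. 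The abandoned Cauchy-estimate opening should simply be deleted.) For positivity, the paper avoids boundary analysis altogether. It bounds $\|\Phi(u\,\cdot)\|_\infty$ below by $\|\Phi(u\,\cdot)\|_2=\bigl(\sum_\alpha|c_\alpha|^2u^{2|\alpha|}\bigr)^{1/2}$ and reads off $\Lambda(\Phi)\ge\sum_\alpha|\alpha|\,|c_\alpha|^2>0$. This is a coefficient-level quantity, the mean degree of the squared-coefficient law, which fits the entropy framework of that section. You instead restrict to a complex line through a torus point $z_0$ with $D\Phi(z_0)\ne0$, where $D=\sum_j z_j\partial_j$. You then use that the gradient of $\log|g|$ at $1$, for $g(\lambda)=\Phi(\lambda z_0)$, is normal to the circle, has length $|g'(1)|$, and points outward because $|g|\le1$ on $\mathbb D$. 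Hopf's lemma is superfluous here, since $|g'(1)|\ne0$ already gives strictness. What your route buys is sharpness. Optimizing over $z_0$ gives $\Lambda(\Phi)\ge\max_{\mathbb T^d}|D\Phi|$. On $\mathbb T^d$ one has $\overline{\Phi}\,D\Phi=|D\Phi|$, so the paper's bound $\sum_\alpha|\alpha|\,|c_\alpha|^2=\int_{\mathbb T^d}\overline{\Phi}\,D\Phi$ is only the torus average of $|D\Phi|$. For the seed \eqref{eq:Phi}, your maximum is exactly the one in \Cref{lem:radialformula}, because $2-2\operatorname{Re}(Dp/p)=\operatorname{Re}(D\Phi/\Phi)=|D\Phi|$ on $\mathbb T^2$. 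The price is the one-variable boundary argument and the uniqueness step ruling out $D\Phi\equiv0$ on $\mathbb T^d$.
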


\begin{proof}
Analytic continuation bounds the first derivatives of $\Phi$ on a slightly
larger closed polydisc.  Since $\|\Phi\|_{L^\infty(\mathbb T^d)}=1$,
\[
 \|\Phi(u\,\cdot)\|_\infty\le1+C(u-1)
\]
for $u>1$ close to one, and therefore $\Lambda(\Phi)<\infty$.
Conversely,
\[
 \|\Phi(u\,\cdot)\|_\infty
 \ge
 \|\Phi(u\,\cdot)\|_2
 =
 \left(\sum_\alpha |c_\alpha|^2u^{2|\alpha|}\right)^{1/2}.
\]
The right derivative at $u=1$ of the logarithm on the right is
$\sum_\alpha |\alpha|\,|c_\alpha|^2>0$, because $\Phi$ is nonconstant.
This proves the lower bound.
\end{proof}

\begin{lemma}[Finite information moment]\label{lem:information-moment}
Let $Y$ have distribution
\[
 \mathbb P\{Y=\alpha\}=|c_\alpha|^2
\]
on the nonzero Taylor coefficients of $\Phi$.  Then
\begin{equation}\label{eq:information-second-moment}
 \mathbb E\bigl[(-\log|c_Y|^2)^2\bigr]<\infty.
\end{equation}
In particular, $h(\Phi)<\infty$.
\end{lemma}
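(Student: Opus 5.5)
Since $\Phi$ is analytic on a neighbourhood of $\overline{\mathbb D}^d$, it extends holomorphically to some closed polydisc $u_0\overline{\mathbb D}^d$ with $u_0>1$. Cauchy's estimates on the distinguished boundary $(u_0\mathbb T)^d$ then give exponential decay of the coefficients:
\[
 |c_\alpha|\le \|\Phi(u_0\,\cdot)\|_{L^\infty(\mathbb T^d)}\,u_0^{-|\alpha|}=:C_0u_0^{-|\alpha|}.
\]
The plan is to compare the information function $-\log|c_\alpha|^2$ with this decay, splitting the coefficients into two classes according to whether $|c_\alpha|^2$ is larger or smaller than a polynomial-in-$|\alpha|$ threshold.

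The key elementary fact is that the function $t\mapsto t(\log t)^2$ is bounded on $(0,1]$, and that for $0<t\le t'\le e^{-2}$ it is increasing, so $t(\log t)^2\le t'(\log t')^2$. We then split the sum
\[
 \mathbb E\bigl[(-\log|c_Y|^2)^2\bigr]=\sum_{c_\alpha\ne0}|c_\alpha|^2(\log|c_\alpha|^2)^2
\]
according to $|\alpha|$ and the size of $|c_\alpha|^2$.

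\emph{Large coefficients.} For indices with $|c_\alpha|^2\ge u_0^{-|\alpha|}$ we have $(-\log|c_\alpha|^2)^2\le(|\alpha|\log u_0)^2$, so this part is bounded by
\[
 (\log u_0)^2\sum_\alpha|\alpha|^2|c_\alpha|^2\le(\log u_0)^2\sum_\alpha|\alpha|^2C_0^2u_0^{-2|\alpha|}<\infty,
\]
because the number of multi-indices of length $k$ is $\binom{k+d-1}{d-1}$, which is polynomial in $k$.

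\emph{Small coefficients.} For indices with $|c_\alpha|^2<u_0^{-|\alpha|}$ and $|\alpha|$ large enough that $u_0^{-|\alpha|}\le e^{-2}$, monotonicity gives
\[
 |c_\alpha|^2(\log|c_\alpha|^2)^2\le u_0^{-|\alpha|}(|\alpha|\log u_0)^2 .
\]
Summing over $\alpha$ with the polynomial count of multi-indices again gives a convergent series. The finitely many indices of small length contribute a bounded amount, since $t(\log t)^2$ is bounded.

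Combining the two classes yields \eqref{eq:information-second-moment}. Finiteness of $h(\Phi)$ then follows by Cauchy--Schwarz: $h(\Phi)=\mathbb E[-\log|c_Y|^2]\le(\mathbb E[(\log|c_Y|^2)^2])^{1/2}$.

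The only genuine point is the exponential decay of the coefficients. The rest is bookkeeping, and the one place needing care is the monotonicity threshold $e^{-2}$ for $t(\log t)^2$, which is handled by the finite exceptional set.
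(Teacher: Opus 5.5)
Your proof is correct and follows the paper's route: exponential decay of the Taylor coefficients from Cauchy's estimates on a slightly larger polydisc, an elementary bound on $t(\log t)^2$ on $(0,1]$, polynomial counting of multi-indices, and Cauchy--Schwarz for $h(\Phi)$. The only difference is bookkeeping. The paper avoids your two-class split and the $e^{-2}$ monotonicity threshold by using, for a fixed $0<\theta<1$, the bound $x|\log x|^2\le C_\theta x^\theta$ on $(0,1]$, which gives $|c_\alpha|^2\,|\log|c_\alpha|^2|^2\le C_{\theta,R}R^{-2\theta|\alpha|}$ in one step.
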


\begin{proof}
Choose $R>1$ such that $\Phi$ is analytic on
$R\overline{\mathbb D}^d$.  Cauchy's estimate gives
$|c_\alpha|\le C_RR^{-|\alpha|}$.  For each $0<\theta<1$,
$x|\log x|^2\le C_\theta x^\theta$ on $(0,1]$; hence
\[
 |c_\alpha|^2|\log|c_\alpha|^2|^2
 \le
 C_{\theta,R}R^{-2\theta|\alpha|}.
\]
The last sequence is summable on $\mathbb N_0^d$.  This proves
\eqref{eq:information-second-moment}; the entropy is then finite by
Cauchy--Schwarz.
\end{proof}

\begin{lemma}[Entropy under rare conditioning]\label{lem:entropy-trunc}
Let $X_1,\ldots,X_r$ be independent copies of a discrete random variable
$X$ whose information variable
$I(X)=-\log\mathbb P\{X\}$ has finite second moment.  If events $A_r$ satisfy
\[
 \mathbb P(A_r^c)\le \exp(-cr)
\]
for some $c>0$, then
\begin{equation}\label{eq:entropy-conditioning}
 H\bigl((X_1,\ldots,X_r)\mid A_r\bigr)
 =rH(X)+o(r),
\end{equation}
where the left-hand side denotes the entropy of the conditional law given
$A_r$.
\end{lemma}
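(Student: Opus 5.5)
We will prove the two inequalities $H(\mathbf X\mid A_r)\le rH(X)+o(r)$ and $H(\mathbf X\mid A_r)\ge rH(X)-o(r)$ separately, where $\mathbf X=(X_1,\ldots,X_r)$. Write $\pi_r:=\mathbb P(A_r)$, $\mathbf 1_A$ for the indicator of $A_r$, and $I_r(\mathbf x):=-\log\mathbb P\{\mathbf X=\mathbf x\}=\sum_{i}I(X_i)$. The conditional law is $\mathbb P\{\mathbf X=\mathbf x\mid A_r\}=\mathbb P\{\mathbf X=\mathbf x\}\mathbf 1_A(\mathbf x)/\pi_r$ (we may take $A_r$ to be an event determined by $\mathbf X$; otherwise the conditional probability $\mathbb P(A_r\mid\mathbf X=\mathbf x)\in[0,1]$ replaces the indicator and the same argument applies with an extra nonnegative correction bounded below via the concavity estimate below).

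\emph{Exact formula.} In the deterministic case,
\[
 H(\mathbf X\mid A_r)=\log\pi_r+\frac{1}{\pi_r}\mathbb E\bigl[I_r(\mathbf X)\mathbf 1_A\bigr].
\]
Since $\pi_r\to1$ exponentially, $\log\pi_r=O(\mathrm e^{-cr})$ and $1/\pi_r=1+O(\mathrm e^{-cr})$. It remains to compare $\mathbb E[I_r\mathbf 1_A]$ with $\mathbb E I_r=rH(X)$. The difference is $\mathbb E[I_r\mathbf 1_{A^c}]$, and Cauchy--Schwarz gives
\[
 0\le\mathbb E[I_r\mathbf 1_{A^c}]\le\bigl(\mathbb E I_r^2\bigr)^{1/2}\mathbb P(A_r^c)^{1/2}.
\]
The finite second moment hypothesis, together with independence, yields $\mathbb E I_r^2=r\operatorname{Var}I(X)+r^2H(X)^2=O(r^2)$. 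Hence the error is $O(r\,\mathrm e^{-cr/2})=o(1)$, and combining the pieces gives $H(\mathbf X\mid A_r)=rH(X)+o(1)$, which is stronger than needed. The factor $1/\pi_r$ multiplies $rH(X)$ and contributes only $O(r\mathrm e^{-cr})$.

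\emph{General events.} If $A_r$ is not $\mathbf X$-measurable, write $g(\mathbf x)=\mathbb P(A_r\mid\mathbf X=\mathbf x)$. The conditional law is then $p(\mathbf x)g(\mathbf x)/\pi_r$, and
\[
 H=\log\pi_r+\pi_r^{-1}\mathbb E[g I_r]-\pi_r^{-1}\mathbb E[g\log g].
\]
The first two terms are handled exactly as before, using $1-g$ in place of $\mathbf 1_{A^c}$, with $\mathbb E(1-g)=\mathbb P(A_r^c)$ and $(1-g)^2\le 1-g$. The last term lies in $[0,\,\mathbb E(1-g)]$ because $-g\log g\le 1-g$. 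So it is also exponentially small.

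\emph{Main obstacle.} The only delicate point is controlling the contribution of $I_r$ on the small set $A_r^c$, since $I$ may be unbounded. This is exactly where the second-moment hypothesis (supplied for rational inner seeds by \Cref{lem:information-moment}) and Cauchy--Schwarz enter. A first-moment hypothesis alone would not suffice without uniform integrability.
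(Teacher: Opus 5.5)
Your proof is correct and follows essentially the same route as the paper: the exact identity $H(\mathbf X\mid A_r)=\log\pi_r+\pi_r^{-1}\mathbb E[I_r\mathbf 1_{A_r}]$, followed by Cauchy--Schwarz against the second moment of $I_r=\sum_j I(X_j)$ on the exceptional set $A_r^c$. Your two additions are correct refinements that the paper does not need: the error is in fact $o(1)$, and events not determined by $\mathbf X$ can be handled via $g=\mathbb P(A_r\mid\mathbf X)$ together with $0\le -g\log g\le 1-g$.
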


\begin{proof}
Put $Z_r=(X_1,\ldots,X_r)$, $\tau_r=\mathbb P(A_r^c)$, and let
$p(z)=\mathbb P\{Z_r=z\}$.  Since the conditional law on $A_r$ is
$p(z)/(1-\tau_r)$,
\[
 H(Z_r\mid A_r)
 =
 \frac{rH(X)-E_r}{1-\tau_r}+\log(1-\tau_r),
\]
where
\[
 E_r:=
 \mathbb E\left[
 \left(\sum_{j=1}^r I(X_j)\right)\mathbf1_{A_r^c}
 \right].
\]
By Cauchy--Schwarz,
\[
 E_r
 \le
 \left\|\sum_{j=1}^r I(X_j)\right\|_2\tau_r^{1/2}
 =O(r)\exp(-cr/2)=o(r).
\]
The remaining terms are immediate.
\end{proof}

The next lemma contains the entire tensorization, truncation, and
homogenization mechanism.  Its formulation is deliberately flexible enough
to retain dimensional information.

\begin{lemma}[Tensor truncation and homogenization]
\label{lem:tensor-homogenization}
Let $r_m\to\infty$ be integers such that
\begin{equation}\label{eq:tensor-density}
 \limsup_{m\to\infty}\frac{r_m}{m}<\frac1{\Lambda(\Phi)}.
\end{equation}
Then there are $m$-homogeneous polynomials $P_m$ in $1+dr_m$ variables and
$c>0$ such that
\begin{equation}\label{eq:tensor-norm-ratio}
 \|P_m\|_\infty=1+O(\exp(-cm)),
 \qquad
 \|P_m\|_2=1+O(\exp(-cm)),
 \qquad
 \log\frac{\|P_m\|_\infty}{\|P_m\|_2}=O(\exp(-cm)).
\end{equation}  If $Y_1,\ldots,Y_{r_m}$ are independent coefficient indices
with $\mathbb P\{Y_j=\alpha\}=|c_\alpha|^2$, then the normalized
squared-coefficient law of $P_m$ is
\begin{equation}\label{eq:tensor-conditioned-law}
 p_{P_m}
 =\mathcal L\!\left(
 (Y_1,\ldots,Y_{r_m})
 \,\middle|\,
 \sum_{j=1}^{r_m}|Y_j|\le m
 \right),
\end{equation}
and the discarded event has probability $O(\exp(-2cm))$.
\end{lemma}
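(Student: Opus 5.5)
The construction will tensorize $\Phi$, truncate by total degree, and homogenize with one extra variable $w$. Put $r=r_m$ and let
\[
 \Psi_r(z^{(1)},\ldots,z^{(r)}):=\prod_{j=1}^r\Phi(z^{(j)}),
\]
a function on $\mathbb D^{dr}$ whose Taylor coefficients are $\prod_j c_{\alpha_j}$, indexed injectively by $(\alpha_1,\ldots,\alpha_r)$. Let $T_m$ be the truncation of $\Psi_r$ to total degree $\le m$, and define
\[
 P_m(w,z):=\sum_{\sum|\alpha_j|\le m}\Bigl(\prod_j c_{\alpha_j}\Bigr)w^{m-\sum|\alpha_j|}z^{\alpha_1}\cdots z^{\alpha_r}.
\]
Homogenization keeps coefficient labels injective, because the exponent of $w$ is determined by the rest of the multi-index. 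Since $\sum|c_\alpha|^2=1$, the squared coefficients of $\Psi_r$ form the law of $(Y_1,\ldots,Y_r)$. The coefficients of $P_m$ are exactly those with $\sum|Y_j|\le m$, so \eqref{eq:tensor-conditioned-law} holds by construction, and $\|P_m\|_2^2=1-\mathbb P\{\sum|Y_j|>m\}$.

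\emph{Tail and $\ell_2$ norm.} I would bound the tail by a Chernoff estimate. For $u>1$ close to $1$,
\[
 \mathbb P\Bigl\{\sum|Y_j|>m\Bigr\}\le u^{-2m}\,\Bigl(\sum_\alpha|c_\alpha|^2u^{2|\alpha|}\Bigr)^{r}
 \le u^{-2m}\|\Phi(u\,\cdot)\|_\infty^{2r}.
\]
By the definition of $\Lambda=\Lambda(\Phi)$, for any $\Lambda'>\Lambda$ we have $\|\Phi(u\,\cdot)\|_\infty\le u^{\Lambda'}$ for $u\in(1,u_0]$. The bound is then $u^{-2(m-\Lambda' r)}$. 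By \eqref{eq:tensor-density} we may pick $\Lambda'$ with $\Lambda' r_m\le(1-\eta)m$ for large $m$. Fixing such a $u$ gives decay $\exp(-2cm)$. This settles the discarded probability and $\|P_m\|_2=1+O(e^{-cm})$.

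\emph{Sup norm, the main step.} By homogeneity, $\sup_{\mathbb D^{1+dr}}|P_m|$ is attained with $|w|=1$ and $z$ on the torus, after scaling. Concretely, $P_m(w,z)=w^mT_m(z/w)$ when $w\ne0$, and the maximum over the closed polydisc is attained on the torus by maximum modulus. Hence $\|P_m\|_\infty=\sup_{\zeta\in\mathbb T^{dr}}|T_m(\zeta)|$. I would write $T_m=\Psi_r-(\Psi_r-T_m)$ with $|\Psi_r|=1$ on the torus, and bound the tail $\Psi_r-T_m$ on the torus by the $\ell_1$ norm of its coefficients:
\[
 \sum_{\sum|\alpha_j|>m}\prod_j|c_{\alpha_j}|\le u^{-m}\Bigl(\sum_\alpha|c_\alpha|u^{|\alpha|}\Bigr)^r.
\]
This requires controlling $\sum_\alpha|c_\alpha|u^{|\alpha|}$ by roughly $u^{\Lambda'}$, which is the main obstacle: the $\ell_1$ coefficient norm of $\Phi(u\,\cdot)$ is not bounded by its sup norm. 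My remedy is to not compare at the same radius. Use $\ell_1\le$ (number of terms)$^{1/2}\ell_2$ on a larger radius. For $1<v<u$,
\[
 \sum_\alpha|c_\alpha|v^{|\alpha|}\le\Bigl(\sum_\alpha(v/u)^{2|\alpha|}\Bigr)^{1/2}\|\Phi(u\,\cdot)\|_2\le C(u/v)\,u^{\Lambda'}.
\]
This loses a constant factor per tensor factor, giving a bound $v^{-m}C^ru^{\Lambda' r}$. A constant per factor is too costly, so instead I would apply the Chernoff bound to $\Psi_r$ directly via a Cauchy estimate on the torus of radius $u$ in all $dr$ variables. The key observation is that for each fixed $\zeta\in\mathbb T^{dr}$, the one-variable function
\[
 \lambda\mapsto\Psi_r(\lambda\zeta)=\sum_k\lambda^k\Psi_{r,k}(\zeta)
\]
is analytic on $|\lambda|<u$ and bounded by $u^{\Lambda' r}$. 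Its Taylor coefficients in $\lambda$ are the homogeneous parts, so Cauchy gives $|\Psi_{r,k}(\zeta)|\le u^{\Lambda' r-k}$. Summing over $k>m$ yields
\[
 |\Psi_r-T_m|(\zeta)\le\frac{u^{\Lambda' r-m}}{1-u^{-1}}=O(e^{-cm}).
\]
This gives $\|P_m\|_\infty=1+O(e^{-cm})$. Combined with the $\ell_2$ estimate, it yields \eqref{eq:tensor-norm-ratio}, after adjusting $c$ so that the discarded probability is $O(e^{-2cm})$.
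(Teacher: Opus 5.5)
Your proposal is correct and takes essentially the paper's route: the same tensor--truncate--homogenize construction, the same reduction $\|P_m\|_\infty=\sup_{\mathbb T^{dr}}|T_m|$ via maximum modulus, and the same Cauchy estimate in the common radial variable $\lambda$ giving $|\Psi_{r,k}(\zeta)|\le u^{\Lambda' r-k}$ (the abandoned $\ell_1$ detour is unnecessary). The one difference is that you bound the discarded mass by a Chernoff estimate through $\|\Phi(u\,\cdot)\|_2^{2r}$, while the paper reads it off from Parseval and the sup-norm tail bound; both rest on the same Cauchy-type estimate at radius $u$.
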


\begin{proof}
Choose $\mu>\Lambda(\Phi)$ so that
$\mu\limsup r_m/m<1$.  By the definition of the radial index, one may fix
$u>1$, still inside the domain of analyticity, such that
\[
 M(u):=\|\Phi(u\,\cdot)\|_\infty<u^\mu.
\]
Use $r_m$ copies of $\Phi$ in disjoint variable blocks and write
\[
 F_m(z^{(1)},\ldots,z^{(r_m)})
 :=\prod_{j=1}^{r_m}\Phi(z^{(j)})
 =\sum_{s\ge0}F_{m,s},
\]
where $F_{m,s}$ is homogeneous of total degree $s$.  Cauchy's estimate in
the common radial variable gives
\[
 \|F_{m,s}\|_\infty\le u^{-s}M(u)^{r_m}.
\]
Condition \eqref{eq:tensor-density} therefore yields constants $C,c>0$ such
that
\begin{equation}\label{eq:tensor-tail}
 \left\|\sum_{s>m}F_{m,s}\right\|_\infty
 \le
 \frac{M(u)^{r_m}u^{-m}}{u-1}
 \le C\exp(-cm).
\end{equation}

Set $G_m=\sum_{s=0}^mF_{m,s}$ and homogenize with one additional variable:
\begin{equation}\label{eq:tensor-homogenization-polynomial}
 P_m(z_0,z^{(1)},\ldots,z^{(r_m)})
 :=
 \sum_{s=0}^m z_0^{m-s}F_{m,s}(z^{(1)},\ldots,z^{(r_m)}).
\end{equation}
On the distinguished boundary,
$P_m(z_0,z)=z_0^mG_m(z/z_0)$; hence the maximum-modulus principle gives
$\|P_m\|_\infty=\|G_m\|_\infty$.  Since $F_m$ is inner,
\eqref{eq:tensor-tail} implies
\[
 \|P_m\|_\infty\le1+C\exp(-cm).
\]
Orthogonality of the homogeneous pieces and Parseval give
\[
 1-\|P_m\|_2^2
 =\left\|\sum_{s>m}F_{m,s}\right\|_2^2
 \le C^2\exp(-2cm).
\]
In particular, $\|P_m\|_2=1+O(\exp(-2cm))$; since
$\|P_m\|_\infty\ge\|P_m\|_2$, the matching lower bound for the supremum norm
follows as well.  This proves \eqref{eq:tensor-norm-ratio}.

Let $Y_1,\ldots,Y_{r_m}$ be independent coefficient indices with
$\mathbb P\{Y_j=\alpha\}=|c_\alpha|^2$.  Distinct product labels remain
distinct after homogenization, because
\[
 (\alpha^{(1)},\ldots,\alpha^{(r_m)})
 \longmapsto
 \left(m-\sum_j|\alpha^{(j)}|,
       \alpha^{(1)},\ldots,\alpha^{(r_m)}\right)
\]
is injective on the retained labels.  The retained squared-coefficient mass is
therefore precisely the probability of
$\sum_j|Y_j|\le m$, and normalization gives
\eqref{eq:tensor-conditioned-law}.  Its complement has probability
$1-\|P_m\|_2^2=O(\exp(-2cm))$.
\end{proof}

\begin{theorem}[The entropy--radial principle]\label{thm:principle}
Let $\Phi$ be a nonconstant scalar rational inner function on $\mathbb D^d$,
analytic on a neighbourhood of $\overline{\mathbb D}^d$.  Then
\begin{equation}\label{eq:entropy-radial-principle}
 \liminf_{m\to\infty}\log D_m
 \ge
 \frac{h(\Phi)}{2\Lambda(\Phi)}.
\end{equation}
Equivalently,
\[
 \liminf_{m\to\infty}D_m
 \ge
 \exp\!\left(\frac{h(\Phi)}{2\Lambda(\Phi)}\right).
\]
The resulting lower bound exceeds one precisely when the seed is not a monomial.
\end{theorem}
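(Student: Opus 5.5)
The plan is to apply \Cref{lem:tensor-homogenization} with the largest admissible tensor density, then read off the coefficient ratio from the R\'enyi identity \Cref{lem:renyi}. Fix $\eta>0$ small and set $r_m:=\lfloor (1-\eta)m/\Lambda(\Phi)\rfloor$; by \Cref{lem:radial-index-positive} this is finite and tends to infinity, and \eqref{eq:tensor-density} holds. \Cref{lem:tensor-homogenization} yields $m$-homogeneous $P_m$ with $\log(\|P_m\|_\infty/\|P_m\|_2)=O(\e^{-cm})$. By \eqref{eq:renyi-BH-ratio},
\[
 \log D_m\ge \log\frac{\|a(P_m)\|_{q_m}}{\|P_m\|_\infty}
 =\frac{H_{s_m}(p_{P_m})}{2m}-O(\e^{-cm}),
\]
so it suffices to show $H_{s_m}(p_{P_m})\ge r_m h(\Phi)-o(m)$. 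Then $\liminf\log D_m\ge (1-\eta)h(\Phi)/(2\Lambda(\Phi))$ and we let $\eta\downarrow0$.

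The main obstacle is that $H_{s_m}$ is a R\'enyi entropy of order $s_m<1$, not Shannon entropy. Since R\'enyi entropy is nonincreasing in the order, $H_{s}(p)\ge H_1(p)$ for $s<1$, i.e.\ $H_{s_m}\ge$ the Shannon entropy of the same law. Here I would double-check the convention: $H_s(p)=\frac1{1-s}\log\sum p^s$ tends to Shannon entropy as $s\uparrow1$ and is decreasing in $s$, so the bound goes in the favourable direction. This sidesteps any uniform-in-$m$ control of $s_m\to1$.

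It remains to lower bound the Shannon entropy of $p_{P_m}$. By \eqref{eq:tensor-conditioned-law} it is the law of $(Y_1,\dots,Y_{r_m})$ conditioned on $A_{r_m}=\{\sum|Y_j|\le m\}$, whose complement has probability $O(\e^{-2cm})\le \e^{-c' r_m}$ because $r_m\le m$. The information variable $I(Y)=-\log|c_Y|^2$ has finite second moment by \Cref{lem:information-moment}, and $H(Y)=h(\Phi)$. \Cref{lem:entropy-trunc} then gives entropy $r_mh(\Phi)+o(r_m)$. Since $r_m/m\to(1-\eta)/\Lambda(\Phi)$, dividing by $2m$ gives the claim.

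For the final assertion, the bound exceeds one iff $h(\Phi)>0$. This holds iff the squared coefficient law is not a point mass, i.e.\ iff $\Phi$ is not a unimodular multiple of a monomial; Parseval forces a single nonzero coefficient to have modulus one. Note that a nonconstant inner monomial is allowed by the hypotheses and gives exactly $\exp(0)=1$.
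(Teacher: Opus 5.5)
Your proposal is correct and is essentially the paper's own proof: your $r_m=\lfloor(1-\eta)m/\Lambda(\Phi)\rfloor$ is the paper's $r_m=\lfloor m/\lambda\rfloor$ with $\lambda=\Lambda(\Phi)/(1-\eta)$. The chain \Cref{lem:tensor-homogenization} $\to$ \Cref{lem:renyi} $\to$ $H_{s_m}\ge H$ (monotonicity of R\'enyi entropy in the order) $\to$ \Cref{lem:information-moment,lem:entropy-trunc}, followed by letting $\eta\downarrow0$, matches the paper step for step, and so does the treatment of the monomial case. The only loose phrase is ``because $r_m\le m$''; all that is needed is $r_m=O(m)$, which already makes $O(\exp(-2cm))$ exponentially small in $r_m$.
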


\begin{proof}
Fix $\lambda>\Lambda(\Phi)$ and take
$r_m=\lfloor m/\lambda\rfloor$.  The discarded event in \eqref{eq:tensor-conditioned-law} is exponentially
small in $r_m$.  Hence
\Cref{lem:information-moment,lem:entropy-trunc} give
\[
 H(p_{P_m})=r_mh(\Phi)+o(r_m).
\]
By \Cref{lem:renyi} and the monotonicity of R\'enyi entropy in its order,
\begin{align*}
 \log D_m
 &\ge
 \frac{H_{s_m}(p_{P_m})}{2m}
 -\log\frac{\|P_m\|_\infty}{\|P_m\|_2}\\
 &\ge
 \frac{H(p_{P_m})}{2m}+o(1)
 =
 \frac{r_m}{2m}h(\Phi)+o(1).
\end{align*}
Since $r_m/m\to1/\lambda$,
\[
 \liminf_{m\to\infty}\log D_m
 \ge\frac{h(\Phi)}{2\lambda}.
\]
Let $\lambda\downarrow\Lambda(\Phi)$.  Finally, $h(\Phi)>0$ exactly when the
squared coefficient law has more than one atom, which for an inner function
is equivalent to $\Phi$ not being a monomial.
\end{proof}

\begin{theorem}[Dimension-localized entropy bound]
\label{thm:dimension-localized-entropy}
Let $\Phi:\mathbb D^d\to\mathbb D$ be a non-monomial rational inner function,
analytic on a neighbourhood of $\overline{\mathbb D}^d$.  If
\[
 0<c<\frac d{\Lambda(\Phi)},
\]
then
\begin{equation}\label{eq:local-dimension-lower}
 \liminf_{m\to\infty}D_{m,\lfloor cm\rfloor}
 \ge
 \exp\!\left(\frac{c\,h(\Phi)}{2d}\right)>1.
\end{equation}
\end{theorem}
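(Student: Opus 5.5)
We rerun the argument of \Cref{thm:principle}, but now track the number of variables. The polynomial $P_m$ from \Cref{lem:tensor-homogenization} lives in $1+dr_m$ variables. So the task is to choose $r_m$ so that $1+dr_m\le\lfloor cm\rfloor$ and, at the same time, the density condition \eqref{eq:tensor-density} holds.

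Take
\[
 r_m:=\left\lfloor\frac{\lfloor cm\rfloor-1}{d}\right\rfloor ,
\]
so that $1+dr_m\le\lfloor cm\rfloor$, $r_m\to\infty$, and $r_m/m\to c/d$. The hypothesis $c<d/\Lambda(\Phi)$ says exactly that $c/d<1/\Lambda(\Phi)$, which is \eqref{eq:tensor-density}.

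\Cref{lem:tensor-homogenization} then produces $m$-homogeneous polynomials $P_m$ in at most $\lfloor cm\rfloor$ variables. We pad with inactive variables if necessary, using $D_{m,n}\le D_{m,n+1}$. These polynomials satisfy
\[
 \log\bigl(\|P_m\|_\infty/\|P_m\|_2\bigr)=O(\mathrm e^{-c'm}),
\]
and their normalized squared-coefficient law is the tensor law conditioned on $\sum_j|Y_j|\le m$. The discarded event has probability $O(\mathrm e^{-2c'm})$, which is also exponentially small in $r_m$ because $r_m\asymp m$.

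Next we control the entropy. \Cref{lem:information-moment} gives the finite second information moment, and \Cref{lem:entropy-trunc} then gives
\[
 H(p_{P_m})=r_mh(\Phi)+o(r_m).
\]
Rényi entropy is nonincreasing in its order, and $s_m<1$, so $H_{s_m}(p_{P_m})\ge H(p_{P_m})$. Feeding this into \Cref{lem:renyi} gives
\[
 \log D_{m,\lfloor cm\rfloor}\ge \frac{r_m}{2m}h(\Phi)+o(1)\longrightarrow \frac{c\,h(\Phi)}{2d}.
\]
This is \eqref{eq:local-dimension-lower}. The bound is strictly greater than one because $h(\Phi)>0$ for a non-monomial inner function, as observed at the end of the proof of \Cref{thm:principle}.

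The step that needs care is checking that every $o(\cdot)$ and exponential error from \Cref{lem:tensor-homogenization} is uniform under this specific choice of $r_m$. That lemma is stated under a $\limsup$ hypothesis, so it applies directly, and all errors are $o(1)$ after dividing by $2m$. No step here is a genuine obstacle; the content is simply that one extra homogenizing variable plus $d$ variables per tensor factor fits inside $\lfloor cm\rfloor$.
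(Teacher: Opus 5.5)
Your proposal is correct and is essentially the paper's proof. The paper also takes $r_m=\max\{0,\lfloor(\lfloor cm\rfloor-1)/d\rfloor\}$, where the $\max\{0,\cdot\}$ only guards small $m$ and is irrelevant for the $\liminf$; it checks $1+dr_m\le\lfloor cm\rfloor$ and $r_m/m\to c/d<1/\Lambda(\Phi)$, and then runs \Cref{lem:tensor-homogenization,lem:information-moment,lem:entropy-trunc,lem:renyi} exactly as in \Cref{thm:principle}. Your remarks on padding via $D_{m,n}\le D_{m,n+1}$ and on turning the $O(\mathrm e^{-2c'm})$ tail into a bound exponential in $r_m$ just make explicit what the paper leaves implicit.
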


\begin{proof}
Set
\[
 r_m:=
 \max\left\{0,
 \left\lfloor\frac{\lfloor cm\rfloor-1}{d}\right\rfloor
 \right\}.
\]
Then $r_m/m\to c/d<1/\Lambda(\Phi)$ and
$1+dr_m\le\lfloor cm\rfloor$.  Apply
\Cref{lem:tensor-homogenization,lem:information-moment,lem:entropy-trunc,lem:renyi}
exactly as in the proof of \Cref{thm:principle}.  Since $r_m/m\to c/d$, one
obtains
\[
 \liminf_{m\to\infty}\log D_{m,\lfloor cm\rfloor}
 \ge\frac{c\,h(\Phi)}{2d}.
\]
\end{proof}

\section{A one-variable obstruction}
\label{sec:direct-blaschke}

The general principle already has a completely explicit one-variable seed.
For $0<a<1$, let
\begin{equation}\label{eq:blaschke-factor}
 b_a(z):=\frac{a-z}{1-az}
 =a-(1-a^2)\sum_{k\ge1}a^{k-1}z^k.
\end{equation}
Write
\[
 H(x):=-x\log x-(1-x)\log(1-x),
 \qquad 0<x<1.
\]

\begin{lemma}[Entropy of a Blaschke factor]\label{lem:blaschke-S}
The squared Taylor coefficients of $b_a$ have entropy
\begin{equation}\label{eq:blaschke-entropy}
 h(b_a)=2H(a^2).
\end{equation}
Equivalently, if
\[
 S_a(q):=\sum_{k\ge0}|c_k|^q
 =a^q+\frac{(1-a^2)^q}{1-a^q},
\]
then
\begin{equation}\label{eq:blaschke-S-asymptotic}
 \log S_a(q_m)
 =\frac{2H(a^2)}{m+1}+O(m^{-2}).
\end{equation}
\end{lemma}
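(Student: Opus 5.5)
The plan is a direct computation of the squared Taylor coefficients of $b_a$, followed by a first-order expansion of $S_a(q)$ around $q=2$.

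\emph{Entropy.} From \eqref{eq:blaschke-factor} we read off $|c_0|^2=a^2$ and $|c_k|^2=(1-a^2)^2a^{2(k-1)}$ for $k\ge1$. These weights form a two-stage distribution: an atom of mass $a^2$ at $k=0$, and with probability $1-a^2$ a geometric law $\mathbb P\{k\}=(1-a^2)a^{2(k-1)}$ on $k\ge1$. The chain rule for entropy then gives
\[
 h(b_a)=H(a^2)+(1-a^2)\,H_{\mathrm{geo}},
\]
where $H_{\mathrm{geo}}$ is the entropy of a geometric law with ratio $a^2$. That entropy equals $H(a^2)/(1-a^2)$, so $h(b_a)=2H(a^2)$. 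If one prefers to avoid the chain rule, substitute $-\log|c_k|^2=-2\log(1-a^2)-2(k-1)\log a$ directly and use $\sum_{k\ge1}(k-1)a^{2(k-1)}=a^2/(1-a^2)^2$; after collecting terms this gives the same answer.

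\emph{Expansion of $S_a$.} The closed form is immediate: $S_a(q)=a^q+(1-a^2)^q\sum_{j\ge0}a^{qj}$, and $S_a(2)=1$ by Parseval, since $b_a$ is inner. Set $\phi(q)=\log S_a(q)$. This function is real-analytic near $q=2$ because $a<1$. We have $\phi(2)=0$ and $\phi'(2)=\sum_k|c_k|^2\log|c_k|=-\tfrac12 h(b_a)=-H(a^2)$. Since $q_m-2=-2/(m+1)$, Taylor's formula with a bounded second derivative on a neighbourhood of $2$ (for instance $[4/3,2]$, where all series converge uniformly) gives
\[
 \log S_a(q_m)=\frac{2H(a^2)}{m+1}+O(m^{-2}).
\]

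There is no real obstacle here. The only points needing care are the bookkeeping in the entropy sum, which is handled most cleanly through the chain-rule decomposition, and checking that $\phi''$ is bounded uniformly near $q=2$, which follows from the uniform convergence of the differentiated geometric series for $q\ge 4/3$.
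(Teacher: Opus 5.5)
Your proposal is correct and follows essentially the paper's proof: you read off the squared coefficients, sum the geometric series (your chain-rule decomposition is just a tidy way of organizing that sum), and Taylor-expand $\log S_a$ at $q=2$ using $S_a(2)=1$, $\frac{d}{dq}\log S_a(q)\big|_{q=2}=-H(a^2)$, and $q_m-2=-2/(m+1)$. Your explicit check that the second derivative stays bounded near $q=2$ supplies a detail the paper leaves implicit.
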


\begin{proof}
The squared coefficients are
\[
 |c_0|^2=a^2,
 \qquad
 |c_k|^2=(1-a^2)^2a^{2k-2},\quad k\ge1.
\]
Summing the geometric series gives \eqref{eq:blaschke-entropy}.  Since
$S_a(2)=1$ and
\[
 \left.\frac{d}{dq}\log S_a(q)\right|_{q=2}
 =\sum_{k\ge0}|c_k|^2\log|c_k|
 =-H(a^2),
\]
Taylor expansion at $q=2$, with $q_m-2=-2/(m+1)$, gives
\eqref{eq:blaschke-S-asymptotic}.
\end{proof}

For $1<u<1/a$,
\begin{equation}\label{eq:blaschke-lambda}
 \|b_a(u\,\cdot)\|_\infty
 =\frac{u-a}{1-au},
 \qquad
 \Lambda_a:=\Lambda(b_a)=\frac{1+a}{1-a}.
\end{equation}

\begin{proposition}[Direct Blaschke obstruction]
\label{prop:blaschke-direct}
Fix $0<a<1$ and $\lambda>\Lambda_a$, and put
$r_m=\lfloor m/\lambda\rfloor$.  Then
\begin{equation}\label{eq:direct-blaschke-localized}
 \liminf_{m\to\infty}D_{m,r_m+1}
 \ge
 \exp\!\left(\frac{H(a^2)}{\lambda}\right)>1.
\end{equation}
Consequently,
\begin{equation}\label{eq:direct-blaschke-unrestricted}
 \liminf_{m\to\infty}D_m
 \ge
 \exp\!\left(H(a^2)\frac{1-a}{1+a}\right).
\end{equation}
In particular, the choice $a=4/9$ gives
\begin{equation}\label{eq:direct-121}
 \liminf_{m\to\infty}D_m
 \ge
 \exp\!\left(\frac5{13}H\!\left(\frac{16}{81}\right)\right)
 =1.210627\ldots>1.21.
\end{equation}
\end{proposition}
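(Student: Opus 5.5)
The plan is to apply the dimension-localized machinery of \Cref{thm:dimension-localized-entropy} directly to the one-variable seed $\Phi=b_a$, with $d=1$, and to track the variable count explicitly. Write $r_m=\lfloor m/\lambda\rfloor$. Since $\lambda>\Lambda_a$, we have $\limsup r_m/m=1/\lambda<1/\Lambda(b_a)$, so \Cref{lem:tensor-homogenization} applies. It produces $m$-homogeneous polynomials $P_m$ in $1+r_m$ variables, because $d=1$ and one extra variable is used for homogenization. These satisfy $\log(\|P_m\|_\infty/\|P_m\|_2)=O(\mathrm e^{-cm})$. Moreover, the squared-coefficient law of $P_m$ is the law of $r_m$ independent copies of the coefficient index $Y$ of $b_a$, conditioned on $\sum_j Y_j\le m$, and the discarded event has exponentially small probability.

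The first step is to verify the hypotheses on $b_a$. It is rational inner and analytic on $|z|<1/a$, so it is analytic past the closed disc, and it is not a monomial since $0<a<1$. Its radial index is $\Lambda_a=(1+a)/(1-a)$ by \eqref{eq:blaschke-lambda}. That formula follows from the fact that $b_a(u\,\cdot)$ attains its maximum modulus on $\mathbb T$ at $z=-1$ when $u<1/a$, so that $\log\frac{u-a}{1-au}\big/\log u\to\frac{1}{1-a}+\frac{a}{1-a}$. The finite information moment of \Cref{lem:information-moment} holds automatically. By \Cref{lem:blaschke-S}, $h(b_a)=2H(a^2)$.

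The second step combines \Cref{lem:entropy-trunc} and \Cref{lem:renyi} with monotonicity of R\'enyi entropy in its order ($s_m<1$ gives $H_{s_m}\ge H$). This yields
\[
 \log D_{m,r_m+1}\ge \frac{r_m\,h(b_a)+o(r_m)}{2m}-O(\mathrm e^{-cm})
 =\frac{r_m}{m}H(a^2)+o(1).
\]
Since $r_m/m\to1/\lambda$, we obtain \eqref{eq:direct-blaschke-localized}. Positivity of $H(a^2)$ gives the strict inequality $>1$. The unrestricted bound \eqref{eq:direct-blaschke-unrestricted} then follows from $D_m\ge D_{m,r_m+1}$ by letting $\lambda\downarrow\Lambda_a$, which is also exactly \Cref{thm:principle} for this seed.

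The last step is the numerical specialization $a=4/9$. Here $(1-a)/(1+a)=5/13$ and $a^2=16/81$, and one evaluates $H(16/81)=-\tfrac{16}{81}\log\tfrac{16}{81}-\tfrac{65}{81}\log\tfrac{65}{81}$. Multiplying by $5/13$ and exponentiating should give $1.2106\ldots$. For a certified ``$>1.21$'' I would bound the logarithms by rational enclosures, for instance via standard bounds on $\log 2,\log3,\log5,\log13$. There is no genuine obstacle in this proof. The only point requiring care is that the variable count in \Cref{lem:tensor-homogenization} is exactly $1+r_m$, which is what \Cref{thm:dimension-localized-entropy} glosses over through its $\lfloor cm\rfloor$ bookkeeping. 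The numeric certification is routine but must be done with rigorous rounding.
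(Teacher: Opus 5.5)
Your argument is correct, but it follows a different route from the paper's proof of this proposition.

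\textbf{Your route.} You run the general entropy machinery with $d=1$. You take the conditioned tensor law from \Cref{lem:tensor-homogenization}, get the Shannon entropy asymptotics from \Cref{lem:entropy-trunc}, and then combine the identity of \Cref{lem:renyi} with $H_{s_m}\ge H$. In effect this is \Cref{thm:principle} for the seed $b_a$, with the exact variable count $1+r_m$ tracked. In fact the localized bound also follows at once from \Cref{thm:dimension-localized-entropy} with $c=1/\lambda$, together with $D_{m,r_m}\le D_{m,r_m+1}$. So that theorem's $\lfloor cm\rfloor$ bookkeeping does not gloss over anything.

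\textbf{The paper's route.} The paper deliberately avoids the entropy chain and computes the $q_m$-mass of the truncated tensor power directly. With $\pi_{m,k}=|c_k|^{q_m}/S_a(q_m)$ one has
\[
\|a(Q_m)\|_{q_m}^{q_m}=S_a(q_m)^{r_m}\,\pi_m^{\otimes r_m}\{K_1+\cdots+K_{r_m}\le m\}.
\]
The tilted truncation probability is $1-O(e^{-c_1m})$ by a Chernoff bound. This works because the moment generating functions of $\pi_m$ converge to that of the $|c_k|^2$ law, whose mean is $\sum_k k|c_k|^2=1<\lambda$. The expansion $\log S_a(q_m)=2H(a^2)/(m+1)+O(m^{-2})$ from \Cref{lem:blaschke-S} then gives the limit $H(a^2)/\lambda$.

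\textbf{What each buys.} The paper's route is independent of the entropy--radial theorem. It needs no conditional-entropy lemma and no R\'enyi-to-Shannon comparison. It also evaluates the $\ell_{q_m}$ norm of the test polynomial up to a factor $1+O(e^{-c_1m})$, rather than only bounding it from below. As a result, the critical-dimension theorem rests on a fully explicit geometric computation. Your route is shorter given the machinery already in place, and it shows that the proposition is simply the $d=1$ instance of the general principle.

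\textbf{A small slip.} The maximum of $|b_a(u\,\cdot)|$ on $\mathbb T$ is attained at $z=1$, the point nearest the pole $1/a$, not at $z=-1$. At $z=-1$ the modulus is $(u+a)/(1+au)$, whose logarithmic derivative at $u=1$ is $(1-a)/(1+a)$. Your limit computation nevertheless uses the correct quantity $(u-a)/(1-au)$, so nothing downstream is affected.
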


\begin{proof}
The construction in \Cref{lem:tensor-homogenization}, applied to $b_a$ with
$r_m=\lfloor m/\lambda\rfloor$, produces an $m$-homogeneous polynomial
$Q_m$ in $r_m+1$ variables with
\[
 \|Q_m\|_\infty=1+O(\exp(-cm)).
\]
Its coefficients are the product coefficients of $b_a^{\otimes r_m}$ whose
total degree does not exceed $m$.

To measure their $q_m$-mass directly, put
\[
 \pi_{m,k}:=\frac{|c_k|^{q_m}}{S_a(q_m)},
 \qquad k\ge0.
\]
If $K$ has law $\pi_m$, then, for $t>0$ sufficiently small,
\[
 \mathbb E_{\pi_m}e^{tK}
 =
 \frac{
 a^{q_m}+(1-a^2)^{q_m}\dfrac{e^t}{1-a^{q_m}e^t}}
 {S_a(q_m)}.
\]
These moment-generating functions converge locally uniformly to the one at
$q=2$, whose logarithmic derivative at the origin is
\[
 \sum_{k\ge0}k|c_k|^2=1.
\]
Choose $1<\nu<\lambda$.  For some $t_0>0$ and all large $m$,
$\log\mathbb E_{\pi_m}e^{t_0K}<\nu t_0$.  Since
$m-\nu r_m\ge\delta m$ for some $\delta>0$, Chernoff's inequality gives
\[
 \mathbb P_{\pi_m^{\otimes r_m}}
 \{K_1+\cdots+K_{r_m}>m\}
 =O(\exp(-c_1m)).
\]
Consequently,
\[
 \|a(Q_m)\|_{q_m}^{q_m}
 =S_a(q_m)^{r_m}\bigl(1+O(\exp(-c_1m))\bigr).
\]
Using \eqref{eq:blaschke-S-asymptotic} and $r_m/m\to1/\lambda$,
\[
 \liminf_{m\to\infty}
 \log D_{m,r_m+1}
 \ge
 \lim_{m\to\infty}\frac{r_m}{q_m}\log S_a(q_m)
 =\frac{H(a^2)}{\lambda}.
\]
Letting $\lambda\downarrow\Lambda_a$ proves
\eqref{eq:direct-blaschke-unrestricted}; $a=4/9$ gives
$\Lambda_a=13/5$ and \eqref{eq:direct-121}.
\end{proof}

This proof is independent of the entropy--radial theorem: beyond the common
truncation and homogenization device, it uses only the explicit geometric
coefficient law of the Blaschke factor.  It also identifies the exact
dimensional scale at which contractivity fails.

\section{The critical dimension for asymptotic contractivity}
\label{sec:critical-dimension}

\begin{theorem}[Critical dimension theorem]\label{thm:critical-dimension}
For every sequence of positive integers $(n_m)$,
\begin{equation}\label{eq:critical-dimension}
 D_{m,n_m}\longrightarrow1
 \qquad\Longleftrightarrow\qquad
 \frac{n_m}{m}\longrightarrow0.
\end{equation}
\end{theorem}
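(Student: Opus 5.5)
The argument has two directions.

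\emph{Necessity.} Suppose $n_m/m\not\to0$. Then along a subsequence $n_m\ge cm$ for some fixed $c>0$. Use the one-variable seed $b_a$ of \Cref{prop:blaschke-direct}.

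Fix $a\in(0,1)$, pick $\lambda>\Lambda_a$ with $1/\lambda<c$, and set $r_m=\lfloor m/\lambda\rfloor$. Then $r_m+1\le n_m$ for large $m$ in the subsequence. By monotonicity of $D_{m,n}$ in $n$ and \eqref{eq:direct-blaschke-localized},
\[
 \liminf D_{m,n_m}\ge\liminf D_{m,r_m+1}\ge\exp(H(a^2)/\lambda)>1
\]
along that subsequence. Such a $\lambda$ exists because $\Lambda_a=(1+a)/(1-a)$ can be made close to $1$ by taking $a$ small, though in fact any $\lambda$ large enough works for every $a$. Hence $D_{m,n_m}\not\to1$. (One could also invoke \Cref{thm:dimension-localized-entropy} with $d=1$, $\Phi=b_a$, and a $c'<\min\{c,1/\Lambda_a\}$.)

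\emph{Sufficiency.} Assume $n_m=o(m)$. We must show $\limsup D_{m,n_m}\le1$; the bound $D_{m,n}\ge1$ is trivial from monomials. Write $n=n_m$. The plan has three steps.

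\begin{enumerate}
\item Apply H\"older to the coefficient sequence. The number of monomials is $N=\binom{m+n-1}{n-1}$, and $1/q_m-1/2=1/(2m)$, so
\[
 \|a(P)\|_{q_m}\le N^{1/(2m)}\|a(P)\|_2 .
\]
\item Use Parseval:
\[
 \|a(P)\|_2=\|P\|_{L^2(\mathbb T^n)}\le\|P\|_\infty .
\]
\item Combine the two: $D_{m,n}\le N^{1/(2m)}$. It remains to show $\log N=o(m)$ when $n=o(m)$. The entropy bound
\[
 \log\binom{m+n}{n}\le(m+n)\,H\!\left(\frac{n}{m+n}\right)
\]
applies, and $H(x)\to0$ as $x\to0$. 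Equivalently, $\log N\le n\log(e(m+n)/n)=m\cdot\frac nm\log(e(1+m/n))$, and $x\log(e(1+1/x))\to0$ as $x\to0$.
\end{enumerate}

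This gives $D_{m,n_m}\le\exp(o(1))\to1$.

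The only subtle point is the bookkeeping in necessity: we need a $\lambda$ with $\lfloor m/\lambda\rfloor+1\le n_m$ along the subsequence, which requires $\lambda>1/c$ as well as $\lambda>\Lambda_a$. Both hold by taking $\lambda$ large, and the resulting lower bound $\exp(H(a^2)/\lambda)$ still exceeds $1$. The proof is otherwise routine.
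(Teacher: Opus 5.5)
Your proposal is correct and follows the paper's proof essentially step for step. Sufficiency uses the counting bound $D_{m,n}\le\binom{m+n-1}{m}^{1/(2m)}$, obtained from H\"older and Parseval, together with $x\log(1+1/x)\to0$. Necessity uses monotonicity of $D_{m,n}$ in $n$ and the localized Blaschke obstruction of \Cref{prop:blaschke-direct} with $\lambda>\max\{\Lambda_a,1/c\}$. Bounding through $\binom{m+n}{n}$ also lets you avoid the paper's separate restriction $n_m\ge2$.
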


\begin{proof}
There are
\[
 N_{m,n}:=\binom{m+n-1}{m}
\]
monomials of degree $m$ in $n$ variables.  H\"older's inequality and
Parseval therefore give
\begin{equation}\label{eq:dimension-counting-bound}
 1\le D_{m,n}
 \le
 N_{m,n}^{1/q_m-1/2}
 =
 \binom{m+n-1}{m}^{1/(2m)}.
\end{equation}
If $n_m=o(m)$, then, for $n_m\ge2$,
\[
 \log D_{m,n_m}
 \le
 \frac{n_m-1}{2m}
 \log\!\left(\frac{e(m+n_m-1)}{n_m-1}\right)
 \longrightarrow0,
\]
because $x\log(1+1/x)\to0$ as $x\downarrow0$.  Thus
$D_{m,n_m}\to1$.

Conversely, suppose $n_m/m$ does not tend to zero.  Along a subsequence,
$n_m\ge\eta m$ for some $\eta>0$.  Fix $0<a<1$ and choose
\[
 \lambda>\max\left\{\Lambda_a,\frac1\eta\right\}.
\]
Then $\lfloor m/\lambda\rfloor+1\le n_m$ along that subsequence for all
large $m$.  By \eqref{eq:dimension-monotonicity} and
\Cref{prop:blaschke-direct},
\[
 \liminf D_{m,n_m}
 \ge
 \exp\!\left(\frac{H(a^2)}{\lambda}\right)>1.
\]
Hence $D_{m,n_m}$ cannot converge to one.
\end{proof}

\begin{remark}[The critical scale]\label{rem:critical-dimension-meaning}
Fixed dimension is only the first point in the contractive phase.  The
ambient dimension may diverge arbitrarily fast subject to $n_m=o(m)$, and
this is sharp: linear growth along a single subsequence already leaves a
persistent gap from one.  Thus the degree itself is the critical dimension
for asymptotic contractivity.
\end{remark}

The counting argument and the entropy construction also quantify the
approach to one below the threshold.

\begin{theorem}[Quantitative subcritical regime]
\label{thm:quantitative-subcritical}
Let $n_m\to\infty$ and $n_m=o(m)$.  For every nonconstant rational inner
function $\Phi:\mathbb D^d\to\mathbb D$ analytic beyond the closed polydisc,
\begin{equation}\label{eq:quantitative-subcritical-lower}
 \liminf_{m\to\infty}
 \frac{m}{n_m}\log D_{m,n_m}
 \ge
 \frac{h(\Phi)}{2d}.
\end{equation}
Moreover,
\begin{equation}\label{eq:quantitative-subcritical-upper}
 \log D_{m,n_m}
 \le
 \frac{n_m}{2m}\log\frac{m}{n_m}
 +O\!\left(\frac{n_m}{m}\right).
\end{equation}
\end{theorem}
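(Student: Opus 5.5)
The two estimates are independent, so the plan treats them separately.

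\emph{Upper bound.} Start from the counting bound \eqref{eq:dimension-counting-bound}, $\log D_{m,n}\le \frac1{2m}\log\binom{m+n-1}{m}$, and write $\binom{m+n-1}{m}=\binom{m+n-1}{n-1}$. The standard estimate $\binom{N}{k}\le (eN/k)^k$ with $k=n_m-1$, $N=m+n_m-1$ gives
\[
 \log D_{m,n_m}\le \frac{n_m-1}{2m}\Bigl(1+\log\frac{m+n_m-1}{n_m-1}\Bigr).
\]
Since $n_m=o(m)$ and $n_m\to\infty$, we have $\log\frac{m+n_m-1}{n_m-1}=\log\frac m{n_m}+O(1)$. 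Replacing $n_m-1$ by $n_m$ changes the right-hand side by at most $\frac1{2m}\log\frac m{n_m}=O(n_m/m)$. This yields \eqref{eq:quantitative-subcritical-upper}.

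\emph{Lower bound.} Rerun \Cref{lem:tensor-homogenization} with $r_m:=\lfloor (n_m-1)/d\rfloor$ copies of $\Phi$. Because $r_m/m\to0<1/\Lambda(\Phi)$, the density hypothesis \eqref{eq:tensor-density} holds. The result is an $m$-homogeneous $P_m$ in $1+dr_m\le n_m$ variables with $\log(\|P_m\|_\infty/\|P_m\|_2)=O(e^{-cm})$; by dimension monotonicity this $P_m$ is admissible for $D_{m,n_m}$. For the entropy step, the discarded event $\{\sum_j|Y_j|>m\}$ must be shown to be exponentially small \emph{in $r_m$}, since \Cref{lem:entropy-trunc} requires $\mathbb P(A^c_{r})\le e^{-cr}$. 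The tail bound \eqref{eq:tensor-tail} gives a probability of order $M(u)^{2r_m}u^{-2m}\le e^{-c'm}\le e^{-c' r_m}$, so this holds. \Cref{lem:information-moment,lem:entropy-trunc} then give $H(p_{P_m})=r_mh(\Phi)+o(r_m)$.

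Apply \Cref{lem:renyi} together with $H_{s_m}\ge H$. At this point the additive $o(1)$ used in \Cref{thm:principle} is too weak: the norm-ratio error $O(e^{-cm})$ must be compared with the main term $r_m/m\asymp n_m/m$. But $e^{-cm}=o(n_m/m)$ whenever $n_m\ge1$, so
\[
 \log D_{m,n_m}\ge \frac{r_mh(\Phi)}{2m}-O(e^{-cm})=\frac{n_m}{m}\Bigl(\frac{h(\Phi)}{2d}+o(1)\Bigr),
\]
using $r_m d/n_m\to1$ (here $n_m\to\infty$ is needed). Multiplying by $m/n_m$ gives \eqref{eq:quantitative-subcritical-lower}.

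The main obstacle is making sure that every error term is $o(n_m/m)$ rather than $o(1)$. The entropy error $o(r_m)/m$ is fine. The exponential truncation errors are also fine, because they decay in $m$ and are compared with a quantity decaying only like $n_m/m$. So the delicate point is only to check that the proof of \Cref{lem:tensor-homogenization} does not require $r_m\to\infty$ at a linear rate, which it does not.
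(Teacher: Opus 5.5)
Your proposal is correct and follows the paper's proof essentially line by line. For the lower bound you use the same choice $r_m=\lfloor (n_m-1)/d\rfloor$ with the tensor--homogenization, entropy-conditioning and R\'enyi lemmas, and you are in fact more explicit than the paper that the $O(\mathrm e^{-cm})$ norm-ratio error is $o(n_m/m)$; for the upper bound you use the same counting estimate with $\binom{N}{k}\le(\mathrm eN/k)^k$. One harmless slip: $\frac1{2m}\log\frac m{n_m}$ need not be $O(n_m/m)$ (take $n_m\approx\log\log m$), but you never need this, because $\log(m/n_m)\ge0$ eventually, so replacing $n_m-1$ by $n_m$ in the leading term only enlarges the bound.
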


\begin{proof}
Set
\[
 r_m:=
 \max\left\{0,
 \left\lfloor\frac{n_m-1}{d}\right\rfloor
 \right\}.
\]
Then $r_m\to\infty$, $r_m/m\to0$, $r_m/n_m\to1/d$, and
$1+dr_m\le n_m$.  Applying
\Cref{lem:tensor-homogenization,lem:information-moment,lem:entropy-trunc,lem:renyi}
gives
\[
 \log D_{m,n_m}
 \ge
 \frac{r_mh(\Phi)}{2m}
 +o\!\left(\frac{r_m}{m}\right),
\]
which proves \eqref{eq:quantitative-subcritical-lower}.

For the upper bound, \eqref{eq:dimension-counting-bound} and
$\binom Mk\le(\mathrm e M/k)^k$ yield
\[
 \log D_{m,n_m}
 \le
 \frac{n_m-1}{2m}
 \left[
 1+\log\frac{m+n_m-1}{n_m-1}
 \right].
\]
Since $n_m\to\infty$ and $n_m=o(m)$,
\[
 \log\frac{m+n_m-1}{n_m-1}
 =\log\frac{m}{n_m}+O(1),
\]
which is \eqref{eq:quantitative-subcritical-upper}.
\end{proof}

\begin{remark}[Rates inside the contractive phase]\label{rem:subcritical-examples}
If $n_m=m^\alpha+o(m^\alpha)$ with $0<\alpha<1$, then every admissible seed
$\Phi$ gives
\[
 \frac{h(\Phi)}{2d}m^{\alpha-1}+o(m^{\alpha-1})
 \le
 \log D_{m,n_m}
 \le
 \frac{1-\alpha}{2}m^{\alpha-1}\log m
 +O(m^{\alpha-1}).
\]
If $n_m\sim m/\log m$, the corresponding scales are
$1/\log m$ and $(\log\log m)/\log m$.  Thus the threshold theorem also
controls the rate at which contractivity is recovered below linear
dimension.
\end{remark}

\section{An explicit rational-inner witness}
\label{sec:explicit-witness}\label{sec:biv-witness}

The Blaschke factor proves noncontractivity with an explicit gap above
$1.21$.  A bivariate seed improves the entropy gained per unit of radial
growth.

Set
\[
 t:=\frac9{19},
 \qquad
 s:=\frac{11}{36},
\]
and define
\[
 p(z,w):=1+tz-tw-szw.
\]
Its reflection in bidegree $(1,1)$ is
\[
 \widetilde p(z,w)
 :=zw\,\overline{p(1/\overline z,1/\overline w)}
 =zw+tw-tz-s.
\]
We consider
\begin{equation}\label{eq:Phi}
 \Phi(z,w):=\frac{\widetilde p(z,w)}{p(z,w)}
 =\frac{zw+tw-tz-s}{1+tz-tw-szw}.
\end{equation}

\begin{lemma}[Closed-bidisc stability]\label{lem:stable}
The polynomial $p$ has no zero on $\overline{\mathbb D}^2$.  Consequently,
$\Phi$ is rational inner and analytic on a neighbourhood of
$\overline{\mathbb D}^2$.
\end{lemma}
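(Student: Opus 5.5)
The plan is to treat $p$ as affine in $z$ for each fixed $w$ and reduce stability to a one-variable inequality on $\overline{\mathbb D}$. Write
\[
 p(z,w)=(1-tw)+z\,(t-sw).
\]
For fixed $w\in\overline{\mathbb D}$ the first step is to show that
\[
 |1-tw|>|t-sw|.
\]
This already gives stability. Indeed, $1-tw\neq0$ because $t<1$, so if $t-sw=0$ then $p(\cdot,w)\ne0$. Otherwise the only zero in $z$ is $z=-(1-tw)/(t-sw)$, which then has modulus $>1$. Hence $p$ has no zero on $\overline{\mathbb D}^2$.

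To prove the inequality, put $x=\operatorname{Re}w$ and $\rho=|w|\le1$, and expand:
\[
 |1-tw|^2-|t-sw|^2=(1-t^2)-2t(1-s)\,x+(t^2-s^2)\rho^2 .
\]
Since $1-s>0$, for fixed $\rho$ this is smallest when $x=\rho$. So it suffices to show that the quadratic
\[
 g(\rho)=(1-t^2)-2t(1-s)\rho+(t^2-s^2)\rho^2
\]
is positive on $[0,1]$. With $t=9/19$ and $s=11/36$ we have $t>s$, so $g$ is convex. Its vertex is at $t(1-s)/(t^2-s^2)$, and one checks by exact rational arithmetic that this exceeds $1$: numerically it is about $0.329/0.131$. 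Hence $g$ is decreasing on $[0,1]$ and its minimum is
\[
 g(1)=1-s^2-2t(1-s)=(1-s)(1+s-2t).
\]
Now
\[
 1+s-2t=1+\tfrac{11}{36}-\tfrac{18}{19}=\tfrac{245}{684}>0,
\]
so $g(1)>0$, which completes stability. The only step needing care is this exact arithmetic, in particular that the vertex lies beyond $1$. If that check were tight, one could instead bound $g$ on $[0,1]$ by the obvious convexity estimate.

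For the consequences, both claims follow from stability. Since $\overline{\mathbb D}^2$ is compact and $p$ is continuous and nonvanishing there, $p$ has no zero on a slightly larger polydisc $R\,\overline{\mathbb D}^2$ with $R>1$. Hence $\Phi=\widetilde p/p$ is analytic on a neighbourhood of $\overline{\mathbb D}^2$. On $\mathbb T^2$ we have $1/\overline z=z$ and $1/\overline w=w$, so
\[
 \widetilde p(z,w)=zw\,\overline{p(z,w)},
 \qquad
 |\widetilde p|=|p| \text{ on } \mathbb T^2.
\]
Therefore $|\Phi|=1$ on the torus. By the maximum principle on the polydisc, $|\Phi|\le1$ on $\mathbb D^2$, so $\Phi$ is rational inner.
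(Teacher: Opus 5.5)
Your proof is correct and is essentially the paper's argument. The paper solves $p=0$ as $w=(1+tz)/(t+sz)$ and proves $|1+tz|>|t+sz|$ on $\overline{\mathbb D}$ via the same quadratic $g(r)=(1-t^2)-2t(1-s)r+(t^2-s^2)r^2$ with $g(1)=(1-s)(1+s-2t)>0$; you slice in the other variable, which is equivalent since $p(w,z)=p(-z,-w)$, and you show $g$ is decreasing on $[0,1]$ by placing its vertex beyond $1$, where the paper instead bounds $g'(r)\le g'(1)<0$.
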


\begin{proof}
If $p(z,w)=0$, then
\[
 w=\frac{1+tz}{t+sz}.
\]
It is therefore enough to show $|1+tz|>|t+sz|$ for $|z|\le1$.  Writing
$z=re^{i\theta}$, the difference of the squared moduli is bounded below by
\[
 g(r):=1-t^2-2t(1-s)r+(t^2-s^2)r^2.
\]
For the present values of $t$ and $s$,
\[
 g'(r)
 \le2\bigl(t^2-s^2-t(1-s)\bigr)
 =-\frac{92605}{233928}<0,
\]
while
\[
 g(1)=(1-s)(1+s-2t)=\frac{6125}{24624}>0.
\]
Thus $g(r)>0$ on $[0,1]$, proving stability.  On $\mathbb T^2$ one has
$|\widetilde p|=|p|$, so $\Phi$ is inner; stability on the compact closed
bidisc also gives analyticity on a slightly larger bidisc.
\end{proof}

Let
\[
 D:=z\partial_z+w\partial_w.
\]

\begin{lemma}[Boundary formula for the radial index]
\label{lem:radialformula}
For the function in \eqref{eq:Phi},
\begin{equation}\label{eq:radial-boundary-formula}
 \Lambda(\Phi)
 =
 \max_{(z,w)\in\mathbb T^2}
 \left(2-2\operatorname{Re}\frac{Dp(z,w)}{p(z,w)}\right),
\end{equation}
where
\[
 Dp(z,w)=tz-tw-2szw.
\]
\end{lemma}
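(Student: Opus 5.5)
The plan is to compute $\log\|\Phi(u\,\cdot)\|_\infty$ to first order in $\log u$, uniformly over $\mathbb T^2$. We write $u=e^{\tau}$ with $\tau\downarrow0$, so that $\Lambda(\Phi)=\limsup_{\tau\downarrow0}\tau^{-1}\log\|\Phi(e^\tau\cdot)\|_\infty$. By \Cref{lem:stable}, $\Phi$ is analytic and zero-free in its denominator on a neighbourhood of $\overline{\mathbb D}^2$. The maximum modulus principle shows that $\|\Phi(e^\tau\cdot)\|_\infty$ is attained on $\mathbb T^2$, and it suffices to study
\[
 \psi_\tau(z,w):=\log|\Phi(e^\tau z,e^\tau w)|,\qquad (z,w)\in\mathbb T^2 ,
\]
which is smooth in $\tau$ near $0$ and satisfies $\psi_0\equiv0$ because $\Phi$ is inner.

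\textbf{Derivative of $\psi_\tau$.} The first step computes $\partial_\tau\psi_\tau|_{\tau=0}$. For a holomorphic $f$ we have $\frac{d}{d\tau}f(e^\tau z,e^\tau w)=(Df)(e^\tau z,e^\tau w)$, so
\[
 \partial_\tau\psi_\tau\big|_{\tau=0}=\operatorname{Re}\frac{D\widetilde p}{\widetilde p}-\operatorname{Re}\frac{Dp}{p}.
\]
Next we eliminate $\widetilde p$. On $\mathbb T^2$, $\widetilde p(z,w)=zw\,\overline{p(z,w)}$ since $1/\bar z=z$. From the definition, $D\widetilde p=D(zw)\,\overline{p(1/\bar z,1/\bar w)}+zw\,D[\overline{p(1/\bar z,1/\bar w)}]$. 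The first term gives $2\widetilde p$. For the second, the function $g(z,w):=\overline{p(1/\bar z,1/\bar w)}$ is holomorphic, and $D$ acting on $g$ at a point of $\mathbb T^2$ equals $-\overline{(Dp)(z,w)}$. This follows because $z\partial_z$ of $\overline{p(1/\bar z)}$ is $-\overline{(z'\partial_{z'}p)(z')}$ with $z'=1/\bar z$; I will verify this by a one-line computation on monomials $z^{-j}$. Hence on $\mathbb T^2$,
\[
 \frac{D\widetilde p}{\widetilde p}=2-\overline{\left(\frac{Dp}{p}\right)},
\]
and therefore $\partial_\tau\psi_\tau|_{\tau=0}=2-2\operatorname{Re}(Dp/p)=:\ell(z,w)$. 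The explicit form $Dp=tz-tw-2szw$ is immediate from $p=1+tz-tw-szw$.

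\textbf{Passing to the sup-norm.} The remaining step identifies the limsup of $\tau^{-1}\max_{\mathbb T^2}\psi_\tau$ with $\max_{\mathbb T^2}\ell$, and I expect this to be the main point to check. Since $\psi_\tau$ is jointly smooth on a compact set with $\psi_0=0$, Taylor's formula gives $\psi_\tau=\tau\ell+O(\tau^2)$ uniformly on $\mathbb T^2$. Taking the maximum yields
\[
 \max_{\mathbb T^2}\psi_\tau=\tau\max_{\mathbb T^2}\ell+O(\tau^2).
\]
Dividing by $\tau$ and letting $\tau\downarrow0$ shows that the limit exists and equals the right-hand side of \eqref{eq:radial-boundary-formula}. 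It is not necessary to identify where the maximum of $\psi_\tau$ is attained; the uniform Taylor remainder suffices. This uniformity holds because $p$ and $\widetilde p$ have no zeros on a neighbourhood of $\mathbb T^2$: for $p$ this is \Cref{lem:stable}, and $|\widetilde p|=|p|$ on $\mathbb T^2$ extends by continuity to a nearby band.
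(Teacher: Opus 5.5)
Your proposal is correct and takes the same approach as the paper. The reflection identity $D\widetilde p/\widetilde p=2-\overline{Dp/p}$ on $\mathbb T^2$ gives the radial derivative $2-2\operatorname{Re}(Dp/p)$, and a Taylor expansion uniform on the torus lets you pass to the maximum; you simply spell out details the paper leaves implicit, namely the monomial check of the reflection identity and the fact that $\widetilde p$ is zero-free near $\mathbb T^2$.
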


\begin{proof}
On $\mathbb T^2$, reflection in bidegree $(1,1)$ gives
\[
 \operatorname{Re}\frac{D\widetilde p}{\widetilde p}
 =2-\operatorname{Re}\frac{Dp}{p}.
\]
Hence, for fixed $(z,w)\in\mathbb T^2$,
\[
 \left.\frac{d}{du}\log|\Phi(uz,uw)|\right|_{u=1}
 =2-2\operatorname{Re}\frac{Dp(z,w)}{p(z,w)}.
\]
Because $p$ stays uniformly away from zero on $\mathbb T^2$ and $\Phi$ is
analytic beyond the closed bidisc, this first-order expansion is uniform on
the torus.  Passing to the maximum gives
\eqref{eq:radial-boundary-formula}.
\end{proof}

\begin{proposition}[Radial-index certificate]\label{prop:lambda}
For the function $\Phi$ in \eqref{eq:Phi},
\begin{equation}\label{eq:lambda-certificate}
 \Lambda(\Phi)<\frac{22}{5}.
\end{equation}
\end{proposition}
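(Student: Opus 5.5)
By \Cref{lem:radialformula}, it suffices to show that for every $(z,w)\in\mathbb T^2$
\[
 2-2\operatorname{Re}\frac{Dp(z,w)}{p(z,w)}<\frac{22}{5},
 \qquad\text{i.e.}\qquad
 \operatorname{Re}\frac{Dp}{p}>-\frac65 .
\]
Since $p$ does not vanish on $\mathbb T^2$ (\Cref{lem:stable}), we can multiply by $|p|^2>0$. The inequality then becomes the positivity statement
\[
 \mathcal Q(z,w):=\operatorname{Re}\bigl(Dp\,\overline p\bigr)+\tfrac65|p|^2>0
 \qquad\text{on }\mathbb T^2 .
\]
Both $Dp\,\overline p$ and $|p|^2$ are trigonometric polynomials of degree at most one in each variable. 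So $\mathcal Q$ is a real trigonometric polynomial in $(\theta,\phi)$, where $z=e^{i\theta}$ and $w=e^{i\phi}$. We also want the inequality to be strict; compactness of $\mathbb T^2$ turns strict pointwise positivity into a strict bound on the maximum in \eqref{eq:radial-boundary-formula}.

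The key structural step is to write $\mathcal Q$ as a Hermitian form. Put $v=(1,z,w,zw)$. Then $p=\langle c,v\rangle$ with $c=(1,t,-t,-s)$, and $Dp=\langle c',v\rangle$ with $c'=(0,t,-t,-2s)$. It follows that
\[
 \mathcal Q=v^*Hv,\qquad
 H=\tfrac12\bigl(c'c^{T}+c\,c'^{T}\bigr)+\tfrac65\,cc^{T}
\]
(all entries are real), evaluated at the unimodular vector $v$. This is the ``explicit quadratic form'' mentioned in the introduction.

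Positive semidefiniteness of $H$ alone is too strong, and generally false: $H$ has rank at most two, and the constraint $v\in\mathbb T$-structure matters. So I would use the freedom to add Gram-type terms that vanish on the torus. On $\mathbb T^2$ the vector $v^*v=4$ is constant, and several products coincide, for example $\bar z\cdot zw=w$, so $v^*Nv$ is identically zero for suitable traceless or structured matrices $N$. The concrete route I would try first is different: fix $z\in\mathbb T$ and regard $\mathcal Q$ as a real trigonometric polynomial of degree one in $w$,
\[
 \mathcal Q=A(z)+2\operatorname{Re}\bigl(B(z)w\bigr).
\]
Its minimum over $w$ is $A(z)-2|B(z)|$. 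It therefore suffices to show $A(z)^2-4|B(z)|^2>0$ together with $A(z)>0$. Here $A$ and $|B|^2$ are trigonometric polynomials in $\theta$ of degree one and two respectively. With $x=\cos\theta\in[-1,1]$, the condition becomes positivity of a polynomial of low degree in $x$, with rational coefficients in $t=9/19$ and $s=11/36$.

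The last step is to certify that this one-variable polynomial is positive on $[-1,1]$. I would do this by exact rational arithmetic: either a sum-of-squares or Lukács-type representation, $\sigma_0(x)+(1-x^2)\sigma_1(x)$, or evaluation at a few points combined with a derivative-sign argument of the kind used in the proof of \Cref{lem:stable}. I expect the main obstacle to be this certification step, since the margin may be small ($22/5$ was presumably chosen close to the true value of $\Lambda(\Phi)$). The fallback is the matrix form above: find a real $4\times4$ matrix $N$ with $v^*Nv\equiv0$ on $\mathbb T^2$ such that $H+N$ is positive definite, and certify this by exhibiting positive leading principal minors.
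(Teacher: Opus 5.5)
Your plan is sound and it does close, but you fibre the torus differently from the paper. The paper never minimizes in one variable. It notes that $\mathcal Q$ involves only $\cos\theta-\cos\varphi$, $\cos(\theta+\varphi)$ and $\cos(\theta-\varphi)$. The half-angle substitution $x=\sin\frac{\theta+\varphi}{2}$, $y=\sin\frac{\theta-\varphi}{2}$ then turns $\mathcal Q$ into a constant plus a binary quadratic form on $[-1,1]^2$. Completing the square in $x$ and using $y^2\le1$ gives $\mathcal Q\ge\frac{42875}{56610576}$ in one line. That real form in $(x,y)$ is the ``explicit quadratic form'' of the introduction, not your Hermitian $H$.

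Your fibrewise route works, and the certification step you feared is mild. Write
\[
 \mathcal Q=\delta+\sigma+\tau+\kappa(\cos\theta-\cos\varphi)-\sigma\cos(\theta+\varphi)-\tau\cos(\theta-\varphi),
\]
with $\kappa=\frac{909}{380}$, $\sigma=\frac{121}{90}$, $\tau=\frac{1782}{1805}$, $\delta=\frac{25}{162}$. Then $A=\delta+\sigma+\tau+\kappa\cos\theta$ and $4|B|^2=|\kappa+\sigma z+\tau\bar z|^2$, so
\[
 A^2-4|B|^2=\Delta c^2+2\kappa\delta c+\delta^2+2\delta(\sigma+\tau)-\Delta,
 \qquad c=\cos\theta,\quad \Delta=\kappa^2-4\sigma\tau=\tfrac{2385}{5776}.
\]
This is a convex quadratic in $c$, not a higher-degree polynomial. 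Its vertex is $c^*=-\kappa\delta/\Delta\approx-0.894$, inside $[-1,1]$, and its minimum is
\[
 \frac{(2\sigma\delta-\Delta)(\Delta-2\tau\delta)}{\Delta}\approx5.3\cdot10^{-4}>0 .
\]
Also $A\ge\delta+\sigma+\tau-\kappa\approx0.094>0$, so one exact discriminant check suffices. No Luk\'acs or Gram-matrix certificate is needed.

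In the paper's variables, $\mathcal Q=\delta+2\sigma x^2-2\kappa xy+2\tau y^2$. Its certificate is exactly $2\sigma\delta-\Delta>0$, which is one of the two factors in your minimum. The paper's substitution buys a symmetric certificate with no squaring and no sign side-condition. Yours buys a mechanical procedure that works for any real trigonometric polynomial of bidegree $(1,1)$.

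Both bounds are tight: the true minimum of $\mathcal Q$ on $\mathbb T^2$ is $\frac{42875}{56610576}\approx7.6\cdot10^{-4}$. So exact rational arithmetic is genuinely required in either version.
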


\begin{proof}
By \Cref{lem:radialformula}, it suffices to prove
\[
 \operatorname{Re}\frac{Dp}{p}>-\frac65
 \qquad\text{on }\mathbb T^2.
\]
Equivalently,
\begin{equation}\label{eq:positive-target}
 \operatorname{Re}\left(\left(Dp+\frac65p\right)\overline p\right)>0.
\end{equation}
Write $z=e^{i\theta}$, $w=e^{i\varphi}$ and set
\[
 x:=\sin\frac{\theta+\varphi}{2},
 \qquad
 y:=\sin\frac{\theta-\varphi}{2}.
\]
A direct expansion gives
\begin{align*}
 \operatorname{Re}\left(\left(Dp+\frac65p\right)\overline p\right)
 &=
 \frac{25}{162}
 +\frac{121}{45}x^2
 +\frac{3564}{1805}y^2
 -\frac{909}{190}xy\\
 &=
 \frac{121}{45}
 \left(x-\frac{8181}{9196}y\right)^2
 +\frac{25}{162}
 -\frac{107325}{698896}y^2.
\end{align*}
Since $|y|\le1$,
\[
 \operatorname{Re}\left(\left(Dp+\frac65p\right)\overline p\right)
 \ge
 \frac{25}{162}-\frac{107325}{698896}
 =\frac{42875}{56610576}>0.
\]
Thus \eqref{eq:positive-target} holds, and
\eqref{eq:radial-boundary-formula} yields
$\Lambda(\Phi)<2+12/5=22/5$.
\end{proof}

The entropy certificate for the seed \eqref{eq:Phi}, established in
\Cref{prop:entropy}, is
\begin{equation}\label{eq:explicit-entropy-certificate}
 h(\Phi)>2.1313.
\end{equation}
Together with \Cref{prop:lambda}, it gives the strongest explicit lower bound
in this part.

\begin{theorem}[Explicit noncontractivity]\label{thm:lower-main}
For the optimal complex polynomial Bohnenblust--Hille constants,
\begin{equation}\label{eq:explicit-noncontractivity}
 \liminf_{m\to\infty}D_m>1.27.
\end{equation}
\end{theorem}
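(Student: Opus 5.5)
The plan is to apply the entropy--radial principle, \Cref{thm:principle}, to the bivariate seed $\Phi$ of \eqref{eq:Phi}. Its hypotheses are supplied by \Cref{lem:stable}: $\Phi$ is a nonconstant rational inner function on $\mathbb D^2$, analytic on a neighbourhood of $\overline{\mathbb D}^2$. The seed is not a monomial, since for instance its constant coefficient $-s$ and its $z$-coefficient (equal to $-t+ts\neq0$ from the expansion of $1/p$) are both nonzero. Hence \Cref{thm:principle} gives
\[
 \liminf_{m\to\infty}D_m\ge\exp\!\left(\frac{h(\Phi)}{2\Lambda(\Phi)}\right).
\]

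We then insert the two certificates. \Cref{prop:lambda} gives $\Lambda(\Phi)<22/5$, and the entropy certificate \eqref{eq:explicit-entropy-certificate} (established in \Cref{prop:entropy}) gives $h(\Phi)>2.1313$. Because the exponent increases in $h$ and decreases in $\Lambda$, and $\Lambda>0$ by \Cref{lem:radial-index-positive},
\[
 \frac{h(\Phi)}{2\Lambda(\Phi)}>\frac{2.1313}{44/5}=\frac{10.6565}{44}>0.24219.
\]
It remains to check $\exp(0.24219)>1.27$. Since $\log1.27<0.2391$ (for example, $e^{0.2391}>1+0.2391+0.2391^2/2+0.2391^3/6>1.2701$), this holds with margin, and the strict inequality \eqref{eq:explicit-noncontractivity} follows.

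The only substantive obstacle is the entropy certificate $h(\Phi)>2.1313$. It requires a rigorous lower bound for an infinite series $-\sum|c_\alpha|^2\log|c_\alpha|^2$ over the Taylor coefficients of $\Phi$. The approach is to truncate to the finite set $|\alpha|\le N$: every term is nonnegative because $|c_\alpha|\le1$, so the partial sum is a lower bound. The partial sum is computed with exact rational coefficients, obtained from the recursion $p\cdot\Phi=\widetilde p$, together with rigorous rational lower bounds for the logarithms. That computation is deferred to the appendix, and the present theorem reduces to the three-line arithmetic above.
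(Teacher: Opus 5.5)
Your argument is essentially the paper's proof: apply \Cref{thm:principle} to the seed \eqref{eq:Phi}, insert $h(\Phi)>2.1313$ and $\Lambda(\Phi)<22/5$, and finish with a numerical check. The reduction to $h(\Phi)/(2\Lambda(\Phi))>2.1313\cdot 5/44>0.24219$ is correct, as is your non-monomial remark ($c_{10}=-t(1-s)\neq0$).

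The one defect is in the final numerical certificate. The cubic Taylor polynomial at $0.2391$ is $1+0.2391+0.0285844+0.0022782\approx 1.26996$. This is below $1.27$, so the chain $e^{0.2391}>\cdots>1.2701$ you display is false as written.

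The inequality you actually need, $\log 1.27<0.2391$, is still true, since $\log 1.27\approx 0.23902$. But certifying it by truncating the exponential series at $0.2391$ requires the quartic term to exceed $1.27$, and the quintic term to exceed $1.2701$.

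The simpler repair, which is what the paper does, is to expand at the exponent itself. With $x=21313/88000\approx 0.24219$, already $e^x>1+x+x^2/2\approx 1.27152>1.27$.
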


\begin{proof}
By \Cref{thm:principle}, \eqref{eq:explicit-entropy-certificate}, and
\eqref{eq:lambda-certificate},
\[
 \liminf_{m\to\infty}D_m
 \ge
 \exp\!\left(\frac{h(\Phi)}{2\Lambda(\Phi)}\right)
 >
 \exp\!\left(\frac{2.1313}{2(22/5)}\right).
\]
The exponent is $x=21313/88000$, and
\[
 \exp(x)>1+x+\frac{x^2}{2}
 =\frac{127}{100}
 +\frac{23571969}{15488000000}
 >\frac{127}{100}.
\]
This proves \eqref{eq:explicit-noncontractivity}.
\end{proof}

The numerical gap uses only the certified estimates
$h(\Phi)>2.1313$ and $\Lambda(\Phi)<22/5$; no further optimization enters the
argument.

\part{A quantitative application}

\section{A logarithmic remainder for the multidimensional Bohr radius}
\label{sec:bohr-second-order}

The Bohnenblust--Hille inequality arose from the theory of Dirichlet series,
and its modern degree estimates first revealed their force through Sidon
constants and the Bohr phenomenon in several complex variables.  Defant--Frerick--Ortega-Cerd\`a--Ouna\"ies--Seip
\cite[Theorems~1 and~2]{DFOOS} placed the
multidimensional Bohr radius at its correct logarithmic scale; the
subexponential estimate from the earlier joint work
\cite[Theorem~1.1]{BPS}, combined with the argument in
\cite[Section~6]{BPS}, then determined the exact first-order asymptotic.  Polynomial growth yields finer information.  At the
critical degrees $m\asymp\log n$, it reduces the entire
Bohnenblust--Hille loss to a power of $\log n$, and hence controls the first
logarithmic remainder.

Let $\mathfrak b_n$ denote the Bohr radius of the polydisc $\DD^n$, namely the
largest $r\in(0,1)$ such that every polynomial
\[
 f(z)=\sum_{\alpha\in\NN_0^n}a_\alpha z^\alpha
\]
satisfies
\begin{equation}\label{eq:bohr-definition}
 \sum_\alpha |a_\alpha|r^{|\alpha|}
 \le \|f\|_{L^\infty(\DD^n)}.
\end{equation}
Thus $\mathfrak b_1=1/3$, while in high dimension
$\mathfrak b_n\sim\sqrt{(\log n)/n}$ by \cite[Section~6]{BPS}.

We first record the immediate consequence for homogeneous Sidon constants.
For $m,n\ge1$, let $\mathfrak S_{m,n}$ be the least constant such that
\[
 \sum_{|\alpha|=m}|a_\alpha|
 \le \mathfrak S_{m,n}\|P\|_\infty
\]
for every $m$-homogeneous polynomial
$P(z)=\sum_{|\alpha|=m}a_\alpha z^\alpha$ on $\CC^n$.

\begin{proposition}[Polynomial Sidon estimate]\label{prop:polynomial-sidon}
For every $B>\beta_\star$ there is $K_B<\infty$ such that
\begin{equation}\label{eq:polynomial-sidon}
 \mathfrak S_{m,n}
 \le
 K_B m^B
 \binom{n+m-1}{m}^{(m-1)/(2m)}
 \qquad(m,n\ge1).
\end{equation}
\end{proposition}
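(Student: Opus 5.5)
The estimate follows from Hölder's inequality between $\ell_1$ and $\ell_{q_m}$ on the finite set of monomials, combined with \Cref{thm:sharp-polynomial-exponent}.

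Let $P(z)=\sum_{|\alpha|=m}a_\alpha z^\alpha$ be $m$-homogeneous on $\CC^n$. Its coefficient array is indexed by at most
\[
 N_{m,n}=\binom{n+m-1}{m}
\]
multi-indices. Hölder's inequality with exponents $q_m$ and $q_m'=2m/(m-1)$ (the conjugate of $q_m=2m/(m+1)$) gives
\[
 \sum_{|\alpha|=m}|a_\alpha|
 \le N_{m,n}^{1-1/q_m}\,\|a(P)\|_{q_m}.
\]
Here $1-1/q_m=1-(m+1)/(2m)=(m-1)/(2m)$, which is exactly the exponent in \eqref{eq:polynomial-sidon}. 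For $m=1$ we have $q_1'=\infty$; the exponent is $0$, and the inequality is trivial.

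Next bound $\|a(P)\|_{q_m}\le D_{m,n}\|P\|_\infty\le D_m\|P\|_\infty$, using the definition \eqref{eq:Dmn-intro} and $D_{m,n}\le D_m$. Then apply \Cref{thm:sharp-polynomial-exponent} with the given $B>\beta_\star$, which yields $D_m\le K_Bm^B$ for all $m\ge1$.

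Chaining the three inequalities gives $\sum_\alpha|a_\alpha|\le K_Bm^B N_{m,n}^{(m-1)/(2m)}\|P\|_\infty$. Taking the supremum over $P$ proves \eqref{eq:polynomial-sidon}.

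There is no genuine obstacle here. The only points to check are the exponent arithmetic and the fact that the constant $K_B$ from \Cref{thm:sharp-polynomial-exponent} is uniform in $m\ge1$; both are immediate.
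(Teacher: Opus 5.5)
Your proposal is correct and is essentially the paper's proof: H\"older's inequality on the $\binom{n+m-1}{m}$ monomials with exponent $1-1/q_m=(m-1)/(2m)$, followed by $\|a(P)\|_{q_m}\le D_m\|P\|_\infty\le K_Bm^B\|P\|_\infty$ from \Cref{thm:sharp-polynomial-exponent}.
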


\begin{proof}
There are $\binom{n+m-1}{m}$ monomials of degree $m$ in $n$ variables.
H\"older's inequality and \Cref{thm:sharp-polynomial-exponent} give
\[
 \sum_{|\alpha|=m}|a_\alpha|
 \le
 \binom{n+m-1}{m}^{1-1/q_m}\|a(P)\|_{q_m}
 \le
 K_B m^B
 \binom{n+m-1}{m}^{(m-1)/(2m)}\|P\|_\infty.
\]
\end{proof}

\begin{remark}[Dirichlet formulation]
\label{rem:dirichlet-sidon}
The same argument yields a polynomial-loss Sidon estimate for homogeneous
Dirichlet spectra.  More precisely, for every $B>\beta_\star$ there is
$K_B<\infty$ such that, if
$A\subset\{k\in\mathbb N:\Omega(k)=m\}$ is finite, where $\Omega(k)$
counts prime factors with multiplicity, then
\[
 \sum_{k\in A}|a_k|
 \le
 K_B m^B|A|^{(m-1)/(2m)}
 \sup_{t\in\mathbb R}
 \left|\sum_{k\in A}a_k k^{-it}\right|.
\]
Indeed, the prime factorization of $k$ identifies the Dirichlet polynomial
with an $m$-homogeneous polynomial, Kronecker's theorem identifies the two
supremum norms, and H\"older's inequality on the actual support contributes
$|A|^{(m-1)/(2m)}$.  Thus the same polynomial degree loss holds in the setting
in which the Bohnenblust--Hille inequality originated.
\end{remark}

The proof of the quantitative Bohr estimate is governed by a one-dimensional
Laplace principle.  We isolate it in the form needed below.

\begin{lemma}[Discrete saddle estimate]\label{lem:bohr-saddle}
For every $\beta\ge0$ there is $C_\beta<\infty$ such that
\begin{equation}\label{eq:bohr-saddle}
 \sum_{m=1}^\infty
 m^\beta\left(\frac{\e x}{m}\right)^{m/2}
 \le C_\beta \e^{x/2}x^{\beta+1/2}
 \qquad(x\ge2).
\end{equation}
\end{lemma}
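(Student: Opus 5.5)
The plan is to treat the sum as a discrete Laplace integral. Write $f(m):=\frac m2\log\frac{\e x}{m}$, viewed as a function of a real variable $m>0$. Then
\[
 f'(m)=\frac12\log\frac{x}{m},
 \qquad
 f''(m)=-\frac1{2m}.
\]
So $f$ is concave with its maximum at $m=x$, where $f(x)=x/2$. The summand is $m^\beta\e^{f(m)}$, and the target $\e^{x/2}x^{\beta+1/2}$ is exactly $x^\beta\e^{f(x)}$ times the Gaussian width $\sqrt x$ coming from $f''(x)=-1/(2x)$.

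I will split the range of $m$ into three regions.

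\emph{Central region}, $x/2\le m\le 2x$. Here $|f''|\ge 1/(4x)$, so Taylor's theorem with the concave remainder gives
\[
 f(m)\le \frac x2-\frac{(m-x)^2}{8x}.
\]
Also $m^\beta\le 2^\beta x^\beta$. Hence the contribution is at most
\[
 2^\beta x^\beta\e^{x/2}\sum_{m\in\mathbb Z}\e^{-(m-x)^2/(8x)}.
\]
The Gaussian lattice sum is at most $1+\sqrt{8\pi x}\le C\sqrt x$ for $x\ge2$, by comparison with the integral. This gives the main term.

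\emph{Upper tail}, $m>2x$. Concavity gives, for $m\ge 2x$,
\[
 f(m)\le f(2x)+f'(2x)(m-2x),
\]
with $f(2x)=x\log(\e/2)$ and $f'(2x)=-\tfrac12\log2$. So this tail is bounded by
\[
 \e^{x\log(\e/2)}\sum_{m>2x}m^\beta 2^{-(m-2x)/2}
 \le C_\beta x^\beta\e^{x(1-\log2)}.
\]
Since $1-\log 2<1/2$, this is $o(\e^{x/2})$ uniformly for $x\ge2$. The sum converges because the geometric factor dominates $m^\beta$: substitute $m=2x+j$ and use $(2x+j)^\beta\le C(x^\beta+j^\beta)$.

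\emph{Lower region}, $1\le m<x/2$. The function $f$ is increasing there, so each term is at most
\[
 x^\beta\e^{f(x/2)}=x^\beta\e^{(x/4)\log(2\e)}.
\]
There are at most $x$ terms. Since $\tfrac14\log(2\e)=\tfrac14(1+\log2)\approx0.42<\tfrac12$, the total is at most $x^{\beta+1}\e^{0.43x}\le C_\beta\e^{x/2}$.

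Adding the three regions gives \eqref{eq:bohr-saddle}. The estimate is uniform in $x\ge2$, with constants depending only on $\beta$; small $x$ are harmless since all bounds are explicit. The only point that needs care is the central Gaussian comparison, which must be uniform in $x$ rather than merely asymptotic. The integral comparison $\sum_m \e^{-(m-x)^2/c}\le 1+\int_{\mathbb R}\e^{-u^2/c}\,du$, valid for a unimodal summand, handles this directly. No earlier result of the paper is needed; the lemma is purely elementary.
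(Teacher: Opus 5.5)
Your proposal is correct and follows essentially the same route as the paper's proof. Both use the concave exponent $\frac t2(1+\log x-\log t)$ with maximum $x/2$ at $t=x$ and split at $x/2$ and $2x$: a quadratic Taylor bound with curvature $1/(4x)$ in the central window, monotonicity below $x/2$, and the tangent-line geometric bound beyond $2x$ using $1-\log 2<1/2$. Your explicit unimodal integral comparison for the Gaussian lattice sum supplies the uniformity-in-$x$ detail that the paper leaves implicit.
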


\begin{proof}
Set
\[
 \phi_x(t):=\frac t2\left(1+\log x-\log t\right),
 \qquad t>0.
\]
Then $\phi_x$ is strictly concave, with unique maximum
$\phi_x(x)=x/2$.  On $[x/2,2x]$ one has
$\phi_x''(t)=-1/(2t)\le-1/(4x)$, and therefore
\[
 \phi_x(t)\le \frac x2-\frac{(t-x)^2}{8x}.
\]
It follows that the contribution of the integers in this interval is at most
\[
 C_\beta x^\beta\e^{x/2}
 \sum_{m\in\mathbb Z}\exp\!\left(-\frac{(m-x)^2}{8x}\right)
 \le C_\beta\e^{x/2}x^{\beta+1/2}.
\]

For $t\le x/2$, monotonicity gives
\[
 \phi_x(t)\le\phi_x(x/2)
 =\frac x2-\frac{1-\log2}{4}x.
\]
There are at most $x/2$ such integers, so their total contribution is
$O_\beta(x^{\beta+1}\e^{x/2-cx})$ for some $c>0$.

For $t\ge2x$, one has $\phi_x'(t)\le-(\log2)/2$.  Thus
\[
 \phi_x(m)
 \le \phi_x(2x)-\frac{\log2}{2}(m-2x),
 \qquad m\ge2x,
\]
and the polynomially weighted geometric series is bounded by
$C_\beta x^\beta\e^{\phi_x(2x)}$.  Since
$\phi_x(2x)=x(1-\log2)<x/2$, both tails are exponentially smaller than the
right-hand side of \eqref{eq:bohr-saddle}.
\end{proof}

The next statement makes explicit how a polynomial Bohnenblust--Hille bound
propagates into the Bohr problem.

\begin{proposition}[From polynomial growth to a logarithmic remainder]
\label{prop:polynomial-to-bohr}
Suppose that
\[
 D_m\le C_0m^B\qquad(m\ge1)
\]
for some $B\ge0$.  Then, for every
\begin{equation}\label{eq:A-range}
 A>2B+\frac32,
\end{equation}
one has
\begin{equation}\label{eq:bohr-generic-lower}
 \mathfrak b_n
 \ge
 \sqrt{\frac{\log n-A\log\log n}{n}}
\end{equation}
for all sufficiently large $n$.
\end{proposition}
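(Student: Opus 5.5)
Fix $A>2B+3/2$, put $r:=\sqrt{(\log n-A\log\log n)/n}$, and let $f=\sum_{m\ge0}P_m$ with $\|f\|_\infty\le1$. We must show $\sum_\alpha|a_\alpha|r^{|\alpha|}\le1$ for all large $n$.

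The constant and higher terms are handled separately, as in the classical argument. Write $|a_0|=a\le1$. By Wiener's inequality (restricting $f$ to complex lines through the origin and applying the one-variable coefficient bound), $\|P_m\|_\infty\le1-a^2$ for every $m\ge1$. It therefore suffices to prove
\[
 \sum_{m\ge1}\mathfrak S_{m,n}r^m\le\frac12 ,
\]
because then $a+(1-a^2)/2\le1$. We split this sum into two ranges.

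\emph{Small degrees.} For a fixed finite set of $m$, use the trivial bound $\mathfrak S_{m,n}\le\binom{n+m-1}{m}^{1/2}\le Cn^{m/2}/\sqrt{m!}$ (Cauchy--Schwarz and Parseval). For $m=1$ this needs more care, since the term is comparable to $\sqrt{nr^2}=\sqrt{\log n}$, which is not small. We therefore use the sharp fact $\mathfrak S_{1,n}=1$, whose contribution $r\to0$. Similarly, for each bounded $m$ we use $\mathfrak S_{m,n}\le C_m n^{(m-1)/2}$, which comes from the Bohnenblust--Hille--Hölder bound below with $D_m$ fixed. Each such contribution is $O(n^{-1/2}(\log n)^{m/2})\to0$.

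\emph{General degrees.} Here we apply the Hölder step of Proposition~\ref{prop:polynomial-sidon} with the hypothesis $D_m\le C_0m^B$:
\[
 \mathfrak S_{m,n}r^m\le C_0m^B\binom{n+m-1}{m}^{(m-1)/(2m)}r^m .
\]
Using $\binom{n+m-1}{m}\le(\e(n+m)/m)^m$, we get
\[
 \binom{n+m-1}{m}^{(m-1)/(2m)}\le\Big(\frac{\e n}{m}\Big)^{m/2}(1+m/n)^{m/2}\Big(\frac{m}{\e n}\Big)^{1/2}.
\]
The factor $(1+m/n)^{m/2}$ is harmless for $m\le n^{1/2}$. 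For $m>\sqrt n$ we simply use $\binom{n+m-1}{m}\le 2^{n+m}$ and the smallness of $r^m$. This leaves the sum
\[
 C\,n^{-1/2}\sum_m m^{B+1/2}\Big(\frac{\e\,nr^2}{m}\Big)^{m/2}.
\]
Apply Lemma~\ref{lem:bohr-saddle} with $x=nr^2=\log n-A\log\log n$ and $\beta=B+1/2$. This bounds the sum by
\[
 C\,n^{-1/2}\e^{x/2}x^{B+1}=C\,n^{-1/2}n^{1/2}(\log n)^{-A/2}(\log n)^{B+1}(1+o(1)).
\]
That quantity tends to zero provided $A>2B+2$.

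\emph{Main obstacle.} The naive bookkeeping above gives the threshold $2B+2$, whereas the statement asks for $2B+3/2$, so half a power of $\log n$ must be recovered. The saddle contributes $x^{\beta+1/2}$ because its window has width $\sqrt x$. To save $\sqrt{\log n}$, I would refine the Hölder step. The factor $(m/(\e n))^{1/2}$ already yields $m^{1/2}n^{-1/2}$, and I would check whether the Stirling form $\binom{n+m-1}{m}\le(\e n/m)^m(2\pi m)^{-1/2}e^{m^2/n}$ gives an extra $m^{-1/4}$ at exponent $(m-1)/(2m)$, i.e.\ effectively $\beta=B+1/4$. Combined with the window this gives $x^{B+3/4}$, hence the condition $A>2B+3/2$. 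So the plan is to use the precise Stirling estimate for the binomial rather than $(\e n/m)^m$, and to verify carefully that the exponents combine to $A/2>B+3/4$. This exponent accounting is the step I expect to need the most care.
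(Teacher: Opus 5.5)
Your treatment of the range $m>\sqrt n$ fails as written. The bound $\binom{n+m-1}{m}\le 2^{n+m}$ throws away the dependence on $m$. At $m\approx\sqrt n$ it only gives
\[
 r^m\binom{n+m-1}{m}^{(m-1)/(2m)}\le 2^{(n+m)/2}r^m=\exp\Bigl(\tfrac{\log 2}{2}\,n-\bigl(\tfrac12+o(1)\bigr)\sqrt n\log n\Bigr)\longrightarrow\infty .
\]
The bound stays larger than one until $m\gtrsim n\log 2/\log n$, which is where $m\log(1/r)$ finally beats $\tfrac{\log 2}{2}n$. So the whole block $\sqrt n<m\lesssim n/\log n$ is uncontrolled, and the sum cannot be made $\le 1/2$ this way. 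The repair uses the estimate you already wrote down, $\binom{n+m-1}{m}\le[\e(1+n/m)]^m$, together with $n/m<\sqrt n$. This gives $r^m\binom{n+m-1}{m}^{(m-1)/(2m)}\le r\rho_n^{m-1}$ with $\rho_n=[\e r^2(1+\sqrt n)]^{1/2}=O(\sqrt{\log n}\,n^{-1/4})$. Hence $\rho_n\le 1/2$ for large $n$, and the tail contributes $O(r)=o(1)$. This is exactly how the paper handles that range.

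For $m\le\sqrt n$, the refinement you propose is correct and coincides with the paper's argument, so you can commit to it. Use $\binom{n+m-1}{m}\le (n^m/m!)\,\e^{m(m-1)/(2n)}$ and $m!\ge c\sqrt m\,(m/\e)^m$. Together they give $(m!)^{-(m-1)/(2m)}\le Cm^{1/4}(\e/m)^{m/2}$: the $m^{-1/4}$ from Stirling's $\sqrt{m}$ is offset by the $m^{1/2}$ from $(m!)^{1/(2m)}$. Each term is then at most $Cn^{-1/2}m^{B+1/4}(\e x/m)^{m/2}$. The saddle lemma with $\beta=B+1/4$ bounds the sum by $C(\log n)^{B+3/4-A/2}$, which is $o(1)$ precisely when $A>2B+3/2$. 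Your first pass lost the factor $\sqrt{\log n}$ because $\binom{N}{m}\le(\e N/m)^m$ discards the $\sqrt{2\pi m}$ in Stirling.

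Finally, your separate small-degree section, including the special case $m=1$, is unnecessary. The H\"older bound carries the exponent $(m-1)/(2m)$, which vanishes at $m=1$, so every fixed $m$ is already covered by the general estimate.
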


\begin{proof}
Write $L=\log n$ and
\[
 x:=L-A\log L,
 \qquad
 r:=\sqrt{\frac{x}{n}}.
\]
For large $n$, one has $x\ge2$.  Let
$f=\sum_{m\ge0}P_m$ be a polynomial on $\CC^n$ with $\|f\|_\infty\le1$, and
write $a_0=P_0$.  Wiener's lemma in its polydisc form gives
\begin{equation}\label{eq:wiener-homogeneous}
 \|P_m\|_\infty\le1-|a_0|^2
 \qquad(m\ge1);
\end{equation}
see~\cite[Lemma~6.1]{BPS}.  H\"older's inequality and the assumed bound for
$D_m$
therefore yield
\begin{equation}\label{eq:bohr-majorant}
 \sum_\alpha|a_\alpha|r^{|\alpha|}
 \le |a_0|+(1-|a_0|^2)\Sigma_n(r),
\end{equation}
where
\begin{equation}\label{eq:Sigma-def}
 \Sigma_n(r)
 :=C_0\sum_{m\ge1}
 m^B r^m
 \binom{n+m-1}{m}^{(m-1)/(2m)}.
\end{equation}
We show that $\Sigma_n(r)\to0$.

Suppose first that $m\le\sqrt n$.  Since
\[
 \binom{n+m-1}{m}
 =\frac{n^m}{m!}\prod_{j=0}^{m-1}\left(1+\frac jn\right)
 \le\frac{n^m}{m!}
 \exp\!\left(\frac{m(m-1)}{2n}\right),
\]
we obtain
\begin{align*}
 r^m\binom{n+m-1}{m}^{(m-1)/(2m)}
 &\le
 \e^{1/4}n^{-1/2}x^{m/2}(m!)^{-(m-1)/(2m)}.
\end{align*}
Stirling's lower bound $m!\ge c\sqrt m\,(m/\e)^m$ gives
\[
 (m!)^{-(m-1)/(2m)}
 \le C m^{1/4}\left(\frac{\e}{m}\right)^{m/2}.
\]
Consequently,
\begin{equation}\label{eq:bohr-small-degree-term}
 r^m\binom{n+m-1}{m}^{(m-1)/(2m)}
 \le
 Cn^{-1/2}m^{1/4}
 \left(\frac{\e x}{m}\right)^{m/2}.
\end{equation}
Applying \Cref{lem:bohr-saddle} with $\beta=B+1/4$ yields
\begin{align}
 \sum_{m\le\sqrt n}
 m^B r^m\binom{n+m-1}{m}^{(m-1)/(2m)}
 &\le
 Cn^{-1/2}\e^{x/2}x^{B+3/4}\notag\\
 &\le C L^{B+3/4-A/2}
 =o(1),
 \label{eq:bohr-small-degree-sum}
\end{align}
where the last step is precisely \eqref{eq:A-range}.

For $m>\sqrt n$, the elementary estimate
\[
 \binom{n+m-1}{m}
 \le\left[\e\left(1+\frac nm\right)\right]^m
\]
gives
\begin{equation}\label{eq:bohr-large-degree-term}
 r^m\binom{n+m-1}{m}^{(m-1)/(2m)}
 \le r\rho_n^{m-1},
 \qquad
 \rho_n:=\left[\e r^2(1+\sqrt n)\right]^{1/2}.
\end{equation}
Since $\rho_n=O(\sqrt{\log n}\,n^{-1/4})$, one has $\rho_n\le1/2$ for
large $n$, and hence
\[
 \sum_{m>\sqrt n}m^B r\rho_n^{m-1}=O(r)=o(1).
\]
Together with \eqref{eq:bohr-small-degree-sum}, this proves
$\Sigma_n(r)=o(1)$.

In particular, $\Sigma_n(r)\le1/2$ for large $n$.  Since
\[
 t+\frac12(1-t^2)\le1
 \qquad(0\le t\le1),
\]
\eqref{eq:bohr-majorant} implies \eqref{eq:bohr-definition}.  Thus
$r\le\mathfrak b_n$, proving \eqref{eq:bohr-generic-lower}.
\end{proof}

We now combine this principle with the polynomial theorem.  The upper estimate
uses the Kahane--Salem--Zygmund construction in the precise form employed in
\cite[Section~6]{BPS}.

\begin{theorem}[Quantitative Bohr-radius asymptotics]\label{thm:bohr-second-order}
Let $\mathfrak b_n$ be the Bohr radius of $\DD^n$.  Then, for every
\begin{equation}\label{eq:bohr-A-specialized}
 A>2\beta_\star+\frac32,
\end{equation}
and all sufficiently large $n$,
\begin{equation}\label{eq:bohr-two-sided-explicit}
 \sqrt{\frac{\log n-A\log\log n}{n}}
 \le \mathfrak b_n
 \le
 \sqrt{\frac{\log n}{n}}
 \exp\!\left\{
 \left(\frac34+o(1)\right)
 \frac{\log\log n}{\log n}
 \right\}.
\end{equation}
Consequently,
\begin{equation}\label{eq:bohr-relative-rate}
 \mathfrak b_n
 =\sqrt{\frac{\log n}{n}}
 \left(1+O\!\left(\frac{\log\log n}{\log n}\right)\right).
\end{equation}
More precisely,
\begin{align}
 \limsup_{n\to\infty}
 \frac{\log n}{\log\log n}
 \left(1-\mathfrak b_n\sqrt{\frac n{\log n}}\right)
 &\le\beta_\star+\frac34,
 \label{eq:bohr-lower-remainder}\\
 \limsup_{n\to\infty}
 \frac{\log n}{\log\log n}
 \left(\mathfrak b_n\sqrt{\frac n{\log n}}-1\right)
 &\le\frac34.
 \label{eq:bohr-upper-remainder}
\end{align}
\end{theorem}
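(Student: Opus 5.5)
The lower bound in \eqref{eq:bohr-two-sided-explicit} comes from combining \Cref{thm:sharp-polynomial-exponent} with \Cref{prop:polynomial-to-bohr}. Given $A>2\beta_\star+\frac32$, I choose $B$ with $\beta_\star<B<(A-\frac32)/2$. \Cref{thm:sharp-polynomial-exponent} then gives $D_m\le K_Bm^B$ for all $m$. This is exactly the hypothesis of \Cref{prop:polynomial-to-bohr} with $C_0=K_B$, and since $A>2B+\frac32$ that proposition yields the lower bound for all large $n$. Expanding the square root gives
\[
 \sqrt{\tfrac{\log n-A\log\log n}{\log n}}=1-\tfrac A2\tfrac{\log\log n}{\log n}+O\bigl((\tfrac{\log\log n}{\log n})^2\bigr).
\]
Letting $A\downarrow2\beta_\star+\frac32$, so that $A/2\downarrow\beta_\star+\frac34$, gives \eqref{eq:bohr-lower-remainder}.

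For the upper bound I will use the Kahane--Salem--Zygmund construction as in \cite[Section~6]{BPS}. For each $m$ it provides an $m$-homogeneous polynomial on $\mathbb C^n$ with unimodular coefficients on all $\binom{n+m-1}{m}$ monomials (or coefficients $\pm$ multinomial weights) and sup norm at most
\[
 C\sqrt{n\log m}\,\Bigl(\sum_{|\alpha|=m}|a_\alpha|^2\Bigr)^{1/2}.
\]
Testing the Bohr inequality \eqref{eq:bohr-definition} on this $m$-homogeneous piece shows that $\mathfrak b_n^m\sum|a_\alpha|\le\|P\|_\infty$. With the standard choice of weights this yields roughly
\[
 \mathfrak b_n\le\Bigl(C\sqrt{n\log m}\,\bigl(m!/n^{m}\bigr)^{1/2}\Bigr)^{1/m}.
\]
Using Stirling, the dominant factor is $\sqrt{m/(\mathrm e n)}$, corrected by $(C\sqrt{n\log m}\,m^{1/4})^{1/m}$. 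The main obstacle is this step: I must track the exact multinomial weighting so that the powers of $\log m$ and $m$ combine to produce $\frac34\log\log n/\log n$. I will take $m\approx\log n$, optimizing in $m$. Then
\[
 \log\mathfrak b_n\le\tfrac12\log\tfrac{m}{n}-\tfrac12+\tfrac{1}{m}\Bigl(\tfrac12\log n+\tfrac14\log m+\tfrac12\log\log m+O(1)\Bigr).
\]
With $m=\log n$ the terms $-\frac12+\frac{\log n}{2m}$ cancel. The remainder is $\frac{\frac14\log\log n+o(\log\log n)}{\log n}$, plus whatever the precise construction contributes. I expect the contributions to total $\frac34$, that is an additional $\frac12\log\log n$ from the $\sqrt{n\cdot\text{poly}(m)}$ factor normalized against the multinomial $\ell^2$-mass. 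If the first bookkeeping gives a different constant, I will adjust the choice $m=\log n+O(\log\log n)$, since the first-order optimization in $m$ is flat, and recheck against the computation in \cite[Section~6]{BPS}. That computation gives the leading term, and its next-order terms are what I need to extract explicitly.

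Finally, \eqref{eq:bohr-relative-rate} follows immediately from the two one-sided remainder estimates \eqref{eq:bohr-lower-remainder} and \eqref{eq:bohr-upper-remainder}. The upper estimate is obtained by expanding $\exp\{(\frac34+o(1))\log\log n/\log n\}=1+(\frac34+o(1))\log\log n/\log n$.
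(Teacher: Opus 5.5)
Your lower-bound half is exactly the paper's argument. Choose $\beta_\star<B<\tfrac12(A-\tfrac32)$, feed \Cref{thm:sharp-polynomial-exponent} into \Cref{prop:polynomial-to-bohr}, expand the square root, and let $A\downarrow2\beta_\star+\tfrac32$. Your upper-bound half follows the paper's route too (a Kahane--Salem--Zygmund test polynomial, $m=\lfloor\log n\rfloor$, Stirling), but with a sharper KSZ input. The real issue is that you have misjudged what your own computation already gives.

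The paper quotes KSZ from \cite[Section~6]{BPS} in the cruder form \eqref{eq:KSZ-bohr}, namely $\|P\|_\infty\le\kappa\sqrt{m\log m}\,(m!)^{1/2}n^{(m+1)/2}$. There the factor $(m\log m)^{1/(2m)}$ contributes $\frac{\log m}{2m}$, and Stirling's $\sqrt{2\pi m}$ contributes $\frac{\log m}{4m}$; this is the whole source of the $\frac34$. Your form $C\sqrt{n\log m}\,\|a\|_2$ is the standard homogeneous KSZ bound. It follows from Hoeffding's inequality and a union bound over an $\ell_\infty$-net of $\mathbb T^n$ of mesh $1/(2m)$ and cardinality $(Cm)^n$. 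The net suffices because, for $\|u\|_\infty\le1$, $t\mapsto P(e^{i(\phi+tu)})$ is an exponential sum with frequencies in $[-m,m]$. Bernstein's inequality then gives $|P(e^{i\theta})-P(e^{i\phi})|\le m\|\theta-\phi\|_\infty\|P\|_\infty$. With multinomial weights, $\|a\|_2\le(m!\,n^m)^{1/2}$, so your bound lacks exactly the paper's extra $\sqrt m$.

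Hence your display already yields
\[
 \log\Bigl(\mathfrak b_n\sqrt{n/\log n}\Bigr)\le\Bigl(\tfrac14+o(1)\Bigr)\frac{\log\log n}{\log n}.
\]
Since the statement is a one-sided inequality, this implies the upper bound in \eqref{eq:bohr-two-sided-explicit} and \eqref{eq:bohr-upper-remainder}, even with $\frac14$ in place of $\frac34$. There is no missing $\frac12\log\log n$ to find, and the ``main obstacle'' you flag does not exist.

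Re-choosing $m=\log n+O(\log\log n)$ could not have changed the coefficient anyway. At $m=(1+\epsilon)\log n$, the terms $\frac12\log\frac{m}{\log n}+\frac12\bigl(\frac{\log n}{m}-1\bigr)$ equal $\frac{\epsilon^2}{4}+O(\epsilon^3)$, so they are minimal and stationary at $\epsilon=0$.

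In short, your route gives a better upper remainder constant, at the price of proving or citing the sharper KSZ inequality. The paper settles for $\frac34$ using only the form recorded in \cite{BPS}. You should commit to your computation rather than hedge toward $\frac34$.
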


\begin{proof}
Fix $A>2\beta_\star+3/2$ and choose
\[
 \beta_\star<B<\frac12\left(A-\frac32\right).
\]
The lower estimate follows from \Cref{thm:sharp-polynomial-exponent} and
\Cref{prop:polynomial-to-bohr} with this choice of $B$.

For the upper estimate, the Kahane--Salem--Zygmund inequality provides, for
all $m,n\ge2$, an $m$-homogeneous polynomial
\[
 P(z)=\sum_{|\alpha|=m}c_\alpha z^\alpha
\]
such that
\begin{equation}\label{eq:KSZ-bohr}
 |c_\alpha|=\binom m\alpha,
 \qquad
 \|P\|_\infty
 \le
 \kappa\sqrt{m\log m}\,(m!)^{1/2}n^{(m+1)/2},
\end{equation}
where $\kappa$ is absolute; see~\cite[Section~6]{BPS}.  Since
$\sum_{|\alpha|=m}\binom m\alpha=n^m$, the definition of $\mathfrak b_n$
gives
\[
 \mathfrak b_n^m n^m\le\|P\|_\infty.
\]
Therefore
\begin{equation}\label{eq:bohr-KSZ-upper}
 \mathfrak b_n
 \le
 \kappa^{1/m}(m\log m)^{1/(2m)}
 (m!)^{1/(2m)}n^{-1/2+1/(2m)}.
\end{equation}
Choose $m=\lfloor\log n\rfloor$.  Stirling's formula gives
\begin{align*}
 \log\!\left(
 \mathfrak b_n\sqrt{\frac n{\log n}}
 \right)
 &\le
 \frac{\log(m\log m)}{2m}
 +\frac{\log(2\pi m)}{4m}
 +\frac12\log\frac m{\log n}
 +\frac12\left(\frac{\log n}{m}-1\right)
 +O\!\left(\frac1m\right)\\
 &=
 \left(\frac34+o(1)\right)
 \frac{\log\log n}{\log n}.
\end{align*}
This proves the upper bound in \eqref{eq:bohr-two-sided-explicit}.

The relative estimate \eqref{eq:bohr-relative-rate} is immediate.  Finally,
for every $A>2\beta_\star+3/2$,
\[
 \sqrt{1-A\frac{\log\log n}{\log n}}
 =1-\frac A2\frac{\log\log n}{\log n}
 +O\!\left(\frac{(\log\log n)^2}{(\log n)^2}\right).
\]
Letting $A\downarrow2\beta_\star+3/2$ proves
\eqref{eq:bohr-lower-remainder}; exponentiating the upper estimate proves
\eqref{eq:bohr-upper-remainder}.
\end{proof}

\begin{remark}[Why polynomial growth changes the remainder]
The first-order asymptotic for $\mathfrak b_n$ requires only subexponential
growth of $D_m$.  Quantitative control is more sensitive.  At the
saddle degree $m\asymp\log n$, an estimate of the former scale
$\log D_m=O(\!\sqrt{m\log m})$ contributes an error of order
$\sqrt{\log n\,\log\log n}$ in the numerator of the Bohr radius.  The
polynomial estimate $\log D_m=O(\!\log m)$ reduces this to order
$\log\log n$, which is exactly the improvement recorded in
\eqref{eq:bohr-relative-rate}.
\end{remark}

\appendix

\section{Elementary certificates for the weighted bootstrap}
\label{app:bootstrap-contraction}

The main proof and the two sharpening statements require only three strict
numerical inequalities.  We record exact elementary verifications.

\begin{proposition}[Elementary contraction certificates]
\label{prop:bootstrap-elementary-certificates}
Let
\[
 a_0=\frac1{10},
 \qquad
 p_0=\frac{1399}{6400},
 \qquad
 a_1=\frac7{50},
 \qquad
 p_1=\frac{13}{32}.
\]
Then
\begin{align}
 p_0^{-1/2}\sqrt{2\mathrm e}\,
 \exp\{-5h(a_0)\}
 &<1,
 \label{eq:bootstrap-elementary-certificate}\\
 p_1^{-1/2}\sqrt{2\mathrm e}\,
 \exp\{-7h(a_1)/2\}
 &<1,
 \label{eq:bootstrap-three-certificate}\\
 h\!\left(\frac{11}{80}\right)
 &>\frac25.
 \label{eq:bootstrap-five-halves-certificate}
\end{align}
\end{proposition}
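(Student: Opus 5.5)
The plan is to reduce each of the three inequalities to a strict inequality between explicit finite combinations of logarithms of small primes, and then certify it with rational enclosures of those logarithms. Write $\mathrm{atanh}$-type expansions
\[
 \log\frac{1+u}{1-u}=2\sum_{k\ge0}\frac{u^{2k+1}}{2k+1},
\]
with the tail after $N$ terms bounded by the geometric majorant $\frac{2u^{2N+1}}{(2N+1)(1-u^2)}$. Use them to obtain two-sided rational bounds for $\log2$, $\log3$, $\log5$, $\log7$ (via $u=1/3,1/9,1/7,\dots$, e.g.\ $\log2=2\,\mathrm{atanh}(1/3)$, $\log(9/8)=2\,\mathrm{atanh}(1/17)$, $\log(25/24)=2\,\mathrm{atanh}(1/49)$, $\log(50/49)=2\,\mathrm{atanh}(1/99)$). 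Handle the remaining primes by small perturbations of smooth numbers: $\log11$ via $\log(99/100)$, $\log23$ via $\log(69/70)$ or $\log(23/24)$, $\log43$ via $\log(43/42)$, and $\log1399$ via $\log(1399/1400)$ with $1400=2^3\cdot5^2\cdot7$. Throughout I will use the lower bound for every logarithm entering $h$ with a positive sign and the upper bound for the others, so each final comparison is a single rational inequality.

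For \eqref{eq:bootstrap-elementary-certificate}, take logarithms. The claim becomes
\[
 \tfrac12\log\tfrac{6400}{1399}+\log2\cdot\tfrac12+\tfrac12+\tfrac12\log2 -5h(1/10)<0,
\]
with $h(1/10)=\log10-\tfrac9{10}\log9=\log2+\log5-\tfrac95\log3$. Numerically the left side is about $-0.0186$, so enclosures of the prime logarithms to about $10^{-4}$ suffice. For \eqref{eq:bootstrap-three-certificate} the condition is
\[
 \tfrac12\log\tfrac{32}{13}+\log2+\tfrac12-\tfrac72h(7/50)<0,
\]
where
\[
 h(7/50)=\log50-\tfrac{7}{50}\log7-\tfrac{43}{50}\log43 .
\]
The margin here is about $0.12$, so crude enclosures (three decimals) are enough. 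Here one also needs $\log13$, obtained from $\log(13/12)$ or $\log(65/64)$.

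The third inequality \eqref{eq:bootstrap-five-halves-certificate} is the main obstacle. We have
\[
 h(11/80)=\log80-\tfrac{11}{80}\log11-\tfrac{69}{80}\log69\approx0.40040,
\]
so the margin over $2/5$ is only about $4\times10^{-4}$. I would therefore push the enclosures of $\log2,\log3,\log5$ to roughly $10^{-7}$ (a handful of series terms each), and derive $\log11$ and $\log23$ from
\[
 \log\tfrac{100}{99}=2\,\mathrm{atanh}\tfrac1{199},\qquad \log\tfrac{70}{69}=2\,\mathrm{atanh}\tfrac1{139},
\]
each needing only two terms. After that, $h(11/80)$ is a fixed rational combination, and the needed lower bound follows by exact rational arithmetic with ample room relative to the accumulated enclosure error of order $10^{-6}$. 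I would present the final bounds in a short table of certified intervals, so each of the three inequalities is checked by one rational comparison.
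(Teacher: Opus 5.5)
Your plan follows the paper's proof. Take logarithms, then certify each inequality by an exact rational comparison, using truncations of $\log\frac{1+u}{1-u}=2\sum_k u^{2k+1}/(2k+1)$ with geometric tail bounds. The paper differs only in bookkeeping: it bounds a few composite logarithms directly ($\log2$, $\log\frac54$, $\log\frac{10}{9}$, $\log\frac{1600}{1399}$, and $\log 7$ from seven terms at $u=3/4$). It uses the alternating series for $\log\frac{50}{43}$, $\log\frac{11}{10}$ and $\log\frac{80}{69}$. It also takes crude shortcuts in the second inequality, namely $\log\frac{50}{7}>\log 7$ and $\log\frac{64}{13}<\frac85$ via an exponential partial sum. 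Your uniform reduction to prime logarithms works equally well.

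One slip must be fixed before you carry this out. Since $\log\sqrt{2\mathrm e}=\frac12\log2+\frac12$, both of your displayed reductions contain a spurious extra $\frac12\log 2$. The correct targets are
\[
 \tfrac12\log\tfrac{6400}{1399}+\tfrac12\log2+\tfrac12-5h(1/10)<0,
 \qquad
 \tfrac12\log\tfrac{64}{13}+\tfrac12-\tfrac72h(7/50)<0 .
\]
Your quoted margins, about $-0.0186$ and $0.12$, are the values of these correct expressions. The expressions as you wrote them evaluate to roughly $+0.33$ and $+0.23$, so a literal execution would fail. With this corrected, your error budgets are ample, including for the third inequality with its margin of about $4\times10^{-4}$.
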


\begin{proof}
For $0<t<1$, write
\[
 L(t):=\log\frac{1+t}{1-t}
 =2\sum_{j=0}^{\infty}\frac{t^{2j+1}}{2j+1}.
\]
Finite truncation gives lower bounds, while the remaining positive tail is
bounded by a geometric series.

For \eqref{eq:bootstrap-elementary-certificate}, these elementary estimates
give
\[
 \log2>\frac{6931}{10000},
 \qquad
 \log\frac54>\frac{2231}{10000},
 \qquad
 \log\frac{10}{9}>\frac{1053}{10000},
\]
and
\[
 \log2<\frac{347}{500},
 \qquad
 \log\frac{1600}{1399}<\frac{27}{200}.
\]
Since
\[
 \log10=3\log2+\log\frac54,
 \qquad
 h(a_0)=\frac1{10}\log10+\frac9{10}\log\frac{10}{9},
\]
we obtain
\[
 h(a_0)>\frac{32501}{100000}>\frac{13}{40}.
\]
Moreover,
\[
 \frac12\log\frac{2\mathrm e}{p_0}
 =
 \frac12\left(
 1+3\log2+\log\frac{1600}{1399}
 \right)
 <\frac{3217}{2000}.
\]
Hence
\[
 \frac12\log\frac{2\mathrm e}{p_0}-5h(a_0)
 <
 \frac{3217}{2000}-\frac{13}{8}<0.
\]

For \eqref{eq:bootstrap-three-certificate}, seven terms of $L(3/4)$ give
\[
 \log7>\frac{971}{500}.
\]
Since $50/7>7$, this also gives
\[
 \log\frac{50}{7}>\frac{971}{500}.
\]
Moreover,
\[
 \log\frac{50}{43}
 =\log\left(1+\frac7{43}\right)
 >
 \frac7{43}-\frac12\left(\frac7{43}\right)^2
 =\frac{553}{3698}
 >\frac{299}{2000}.
\]
Therefore
\[
 h(a_1)
 =
 \frac7{50}\log\frac{50}{7}
 +\frac{43}{50}\log\frac{50}{43}
 >\frac25.
\]
On the other hand,
\[
 \sum_{j=0}^{5}\frac{(8/5)^j}{j!}
 =\frac{230771}{46875}
 >\frac{64}{13},
\]
so $\log(64/13)<8/5$.  Since $2/p_1=64/13$,
\[
 \frac12\log\frac{2\mathrm e}{p_1}
 =
 \frac12\left(1+\log\frac{64}{13}\right)
 <\frac{13}{10}
 <
 \frac72h(a_1).
\]
It remains to verify \eqref{eq:bootstrap-five-halves-certificate}.  Four
terms of the positive series for $L(1/3)=\log2$ give
\[
 \log2>\frac{53056}{76545}.
\]
The alternating series for $\log(1+x)$ gives
\[
 \log\frac{11}{10}<\frac{143}{1500},
 \qquad
 \log\frac{80}{69}>\frac{1490005}{10074276}.
\]
Since
\[
 h\!\left(\frac{11}{80}\right)
 =
 \frac{11}{80}
 \left(3\log2-\log\frac{11}{10}\right)
 +
 \frac{69}{80}\log\frac{80}{69},
\]
exact rational arithmetic yields
\[
 h\!\left(\frac{11}{80}\right)
 >
 \frac{497171493467}{1241764020000}
 >
 \frac25.
\]
This proves all three assertions.
\end{proof}

\section{Exact certification of the bivariate entropy}
\label{app:entropy-certificate}

Write the rational-inner function from \eqref{eq:Phi} as
\[
 \Phi(z,w)=\sum_{j,k\ge0}c_{jk}z^jw^k.
\]
The entropy estimate used in \Cref{prop:entropy} rests on a finite rational
certificate.  We give the complete reduction here.

\begin{lemma}[Coefficient recurrence]\label{lem:recurrence}
Let $n_{jk}$ denote the coefficients of $\widetilde p$; thus
\[
 n_{00}=-s,
 \qquad n_{10}=-t,
 \qquad n_{01}=t,
 \qquad n_{11}=1,
\]
and $n_{jk}=0$ otherwise.  Then
\begin{equation}\label{eq:recurrence}
 c_{jk}
 =n_{jk}-t c_{j-1,k}+t c_{j,k-1}+s c_{j-1,k-1},
\end{equation}
where coefficients with a negative index are zero.  In particular,
$c_{jk}\in\mathbb Q$ for all $j,k$.
\end{lemma}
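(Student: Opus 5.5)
The recurrence comes from clearing denominators in $\Phi=\widetilde p/p$, that is, from the identity
\[
 p(z,w)\,\Phi(z,w)=\widetilde p(z,w).
\]
By \Cref{lem:stable}, $p$ has no zero on $\overline{\mathbb D}^2$. Hence $\Phi$ is analytic on a slightly larger bidisc, and its Taylor series $\sum_{j,k}c_{jk}z^jw^k$ converges absolutely there. The identity above therefore holds as an identity of convergent power series, and we may compare the coefficients of $z^jw^k$ on both sides.

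Write $p(z,w)=1+tz-tw-szw$. Multiplying the series for $\Phi$ by $p$, the coefficient of $z^jw^k$ in $p\Phi$ is
\[
 c_{jk}+t\,c_{j-1,k}-t\,c_{j,k-1}-s\,c_{j-1,k-1},
\]
with the convention that coefficients carrying a negative index vanish. This convention is exactly what the Cauchy product produces on the boundary rows $j=0$ or $k=0$. On the right-hand side, $\widetilde p=zw+tw-tz-s$ has coefficients
\[
 n_{00}=-s,\quad n_{10}=-t,\quad n_{01}=t,\quad n_{11}=1,
\]
and all other $n_{jk}=0$. Equating the two sides and solving for $c_{jk}$ gives \eqref{eq:recurrence}.

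For rationality, induct on $j+k$. The base case is $c_{00}=n_{00}=-s\in\mathbb Q$. For the inductive step, the right-hand side of \eqref{eq:recurrence} involves only $n_{jk}$ and coefficients $c_{j',k'}$ with $j'+k'<j+k$. Since $t=9/19$ and $s=11/36$ are rational and $\mathbb Q$ is closed under the ring operations, $c_{jk}\in\mathbb Q$.

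There is no real obstacle here. The only points needing care are that the Cauchy product is legitimate, which follows from absolute convergence, and the bookkeeping of signs when moving the terms $tz$, $-tw$ and $-szw$ of $p$ to the other side.
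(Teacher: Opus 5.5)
Your proposal is correct and is the paper's argument: compare coefficients of $z^jw^k$ in $p\Phi=\widetilde p$, then get rationality by induction on $j+k$. The appeal to \Cref{lem:stable} is more than you need, since $p(0,0)=1$ already makes the identity valid in the ring of formal power series, but it does no harm.
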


\begin{proof}
Comparing coefficients in $p\Phi=\widetilde p$ gives
\[
 c_{jk}+t c_{j-1,k}-t c_{j,k-1}-s c_{j-1,k-1}=n_{jk},
\]
which is equivalent to \eqref{eq:recurrence}.  Rationality follows by
induction.
\end{proof}

Since $\Phi$ is inner, Parseval gives
\[
 \sum_{j,k\ge0}|c_{jk}|^2=1,
 \qquad
 h(\Phi)=-\sum_{j,k\ge0}|c_{jk}|^2\log|c_{jk}|^2.
\]
Every summand in the entropy is nonnegative.

\begin{lemma}[A positive rational expansion for $-\log x$]
\label{lem:logseries}
For $0<x<1$,
\begin{equation}\label{eq:logseries}
 -\log x
 =2\sum_{\ell=0}^{\infty}
 \frac1{2\ell+1}
 \left(\frac{1-x}{1+x}\right)^{2\ell+1}.
\end{equation}
Consequently, truncating the series after any finite number of terms gives a
strict rational lower bound whenever $x\in\mathbb Q\cap(0,1)$.
\end{lemma}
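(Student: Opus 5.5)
This is the classical identity $-\log x=\log\frac{1+t}{1-t}$ with $t=\frac{1-x}{1+x}$, combined with the Taylor series of $\operatorname{artanh}$. So the plan is to make the substitution, expand, and then read off positivity for the truncation statement.

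\emph{Step 1 (substitution).} Fix $0<x<1$ and put
\[
 t:=\frac{1-x}{1+x}.
\]
Then $0<t<1$, since $1-x<1+x$. Solving gives $1+t=\frac{2}{1+x}$ and $1-t=\frac{2x}{1+x}$, so
\[
 \frac{1+t}{1-t}=\frac1x,
 \qquad
 -\log x=\log\frac{1+t}{1-t}=L(t),
\]
where $L$ is the function already used in the proof of \Cref{prop:bootstrap-elementary-certificates}.

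\emph{Step 2 (series expansion).} For $|t|<1$ we have
\[
 \log(1+t)=\sum_{n\ge1}\frac{(-1)^{n-1}t^n}{n},
 \qquad
 \log(1-t)=-\sum_{n\ge1}\frac{t^n}{n}.
\]
Subtracting, the even powers cancel and the odd powers double, which gives
\[
 L(t)=2\sum_{\ell\ge0}\frac{t^{2\ell+1}}{2\ell+1}.
\]
The series converges absolutely because $t<1$. Substituting $t=\frac{1-x}{1+x}$ yields \eqref{eq:logseries}.

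\emph{Step 3 (rational lower bounds).} If $x\in\mathbb Q\cap(0,1)$, then $t$ is rational and lies in $(0,1)$. Every term of the series is therefore a strictly positive rational number. Any finite partial sum is rational, and it is strictly smaller than the full sum, because the omitted tail consists of positive terms. This gives the strict rational lower bound claimed in the statement.

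There is no real obstacle here. The only point that needs care is checking that $t\in(0,1)$, since that one fact gives both the convergence in Step 2 and the strict positivity of the tail in Step 3.
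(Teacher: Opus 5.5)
Your proof is correct and follows the paper's argument: the paper also substitutes $y=(1-x)/(1+x)$, writes $-\log x=\log\frac{1+y}{1-y}=2\operatorname{arctanh}y$, and uses the positivity of the odd Taylor coefficients for $0<y<1$. Your explicit check that $t\in(0,1)$ is rational makes the strict-truncation claim fully transparent.
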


\begin{proof}
With $y=(1-x)/(1+x)$, one has
\[
 -\log x=\log\frac{1+y}{1-y}=2\operatorname{arctanh}y.
\]
Now use the Taylor series for $\operatorname{arctanh}y$, whose terms are
positive for $0<y<1$.
\end{proof}

\begin{proposition}[Finite entropy certificate]\label{prop:entropy}
For the function $\Phi$ in \eqref{eq:Phi},
\[
 h(\Phi)>2.1313.
\]
\end{proposition}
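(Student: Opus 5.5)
We certify $h(\Phi)>2.1313$ from finitely many exactly computed coefficients, using the fact that every entropy summand is nonnegative. Since $h(\Phi)=\sum_{j,k}|c_{jk}|^2(-\log|c_{jk}|^2)$ and each term is $\ge0$, discarding all but a finite set $F\subset\mathbb N_0^2$ only decreases the sum. So we fix a truncation, say $F=\{(j,k): j+k\le N\}$ for a moderate $N$, and aim to show
\[
 \sum_{(j,k)\in F}|c_{jk}|^2\bigl(-\log|c_{jk}|^2\bigr)>2.1313 .
\]

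\textbf{Exact coefficients.} The first step computes the coefficients exactly. By \Cref{lem:recurrence}, every $c_{jk}$ is rational, and the recurrence \eqref{eq:recurrence} with $t=9/19$, $s=11/36$ generates all $c_{jk}$ with $(j,k)\in F$ in exact rational arithmetic. We record $x_{jk}:=c_{jk}^2\in\mathbb Q\cap[0,1)$; zero coefficients contribute nothing and are dropped.

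\textbf{Rational lower bounds for the logarithms.} For each nonzero $x_{jk}$, we apply \Cref{lem:logseries}. Truncating the positive series
\[
 -\log x=2\sum_{\ell\ge0}\frac{1}{2\ell+1}\Bigl(\frac{1-x}{1+x}\Bigr)^{2\ell+1}
\]
after $L_{jk}$ terms gives a rational $\lambda_{jk}<-\log x_{jk}$. Hence
\[
 h(\Phi)>\sum_{(j,k)\in F}x_{jk}\lambda_{jk}=:R_F,
\]
and $R_F$ is a single rational number that is checked exactly against $21313/10000$. To keep denominators manageable, each $x_{jk}$ may first be replaced by a rational upper bound $\bar x_{jk}\ge x_{jk}$ with small denominator. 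Since $-\log$ is decreasing, this is admissible inside the logarithm. The weight $x_{jk}$ must stay exact, or be replaced by a rational lower bound. This is the role of the verifier in Appendix~C.

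\textbf{Main obstacle: convergence budget.} The expected main obstacle is choosing $N$ and the $L_{jk}$ so that the certified finite sum actually clears $2.1313$. Two quantities control this. The discarded tail $\sum_{(j,k)\notin F}$ of the entropy is governed by the Cauchy decay $|c_{jk}|\le C_RR^{-(j+k)}$ from \Cref{lem:information-moment}. Separately, the mass deficit $1-\sum_F x_{jk}$ is known exactly by Parseval and tells us how much entropy can still be missing. For small $x$ the arctanh series converges slowly, since $y=(1-x)/(1+x)$ is near $1$. So for the smallest retained coefficients we would use few terms, or discard them entirely, because their weight is tiny. For coefficients near the bulk of the distribution we use many terms. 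Presumably the true value of $h(\Phi)$ is only modestly above $2.1313$, so we would first compute floating-point estimates to pick $N$ and $L_{jk}$ with margin. Only then would we run the exact rational verification of $R_F>2.1313$.
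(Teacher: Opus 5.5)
Your framework is the paper's: exact rational $c_{jk}$ from \eqref{eq:recurrence}, nonnegativity of every summand $-q_{jk}\log q_{jk}$ to keep only finitely many indices, the positive series of \Cref{lem:logseries} to lower-bound each $-\log q_{jk}$ by a rational, and one exact comparison with $21313/10000$. Your triangle $j+k\le N$ versus the paper's square $0\le j,k\le 8$ makes no difference.

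The gap is that you stop before the certificate itself. The whole content of the proposition is that one explicit rational number exceeds $21313/10000$. A procedure in which $F$ and the $L_{jk}$ are chosen later, with no reported outcome, does not yet prove this. The paper's proof consists of exactly that missing instantiation: it fixes $F=\{0\le j,k\le 8\}$ and $300$ terms for every coefficient, and exhibits the exactly verified value $\mathcal E_{8,300}=2.1313125\ldots$.

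The deferred choices are also not innocuous, and the budgeting you sketch points the wrong way. The slack is tiny. The paper's sum clears the target by only about $1.25\times10^{-5}$. A rough computation of the coefficients through $j+k=8$ suggests that $h(\Phi)$ itself exceeds $2.1313$ by only a few thousandths.

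Coefficients with $q_{jk}$ between about $10^{-6}$ and $10^{-3}$ have negligible mass. However, $\log(1/q_{jk})$ is of size $7$ to $14$ for them, so in aggregate they carry more entropy than this slack. They are also exactly the ones for which $y=(1-q)/(1+q)$ is close to $1$. There, $L$ terms of the series give at most about $\log L+2\log2+\gamma$, far below $-\log q$. Giving them few terms, or discarding them, would lose the certificate.

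You need many terms precisely for the small coefficients; the paper's uniform $300$ barely suffices. A better option is the exact range reduction $-\log q=k\log2-\log(2^kq)$ with $2^kq\in(1/2,1]$, lower-bounding $\log 2$ and $-\log(2^kq)$ separately. This makes every series converge geometrically with ratio at most $1/9$.
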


\begin{proof}
Compute $c_{jk}$ from \eqref{eq:recurrence} for $0\le j,k\le8$ and set
$q_{jk}=c_{jk}^2\in\mathbb Q_{\ge0}$.  Since $\Phi$ is inner and is not a
monomial, Parseval implies that every nonzero $q_{jk}$ lies in $(0,1)$.
Applying \Cref{lem:logseries} with $300$ terms gives
\[
 -q_{jk}\log q_{jk}
 >2q_{jk}\sum_{\ell=0}^{299}
 \frac1{2\ell+1}
 \left(\frac{1-q_{jk}}{1+q_{jk}}\right)^{2\ell+1}.
\]
Hence
\begin{equation}\label{eq:certificate}
 \mathcal E_{8,300}
 :=\sum_{j,k=0}^{8}
 2q_{jk}\sum_{\ell=0}^{299}
 \frac1{2\ell+1}
 \left(\frac{1-q_{jk}}{1+q_{jk}}\right)^{2\ell+1}
\end{equation}
is rational.  The exact verifier in \Cref{app:verifier} proves, using integer
arithmetic only, that
\[
 \mathcal E_{8,300}
 =2.1313125426572081568251346999176943007978950832196\ldots
 >\frac{21313}{10000}.
\]
All omitted logarithmic terms are positive, as are all entropy contributions
outside the square $0\le j,k\le8$.  Therefore
\[
 h(\Phi)
 \ge\sum_{j,k=0}^{8}(-q_{jk}\log q_{jk})
 >\mathcal E_{8,300}
 >2.1313.
\]
\end{proof}

\section{Exact verifier for the entropy certificate}
\label{app:verifier}

The following SageMath script is a literal verifier of the finite inequality
used in \Cref{prop:entropy}.  The ancillary file
\nolinkurl{BH_entropy_certificate.sage} is identical to the code below.  Every
quantity entering the final assertion belongs to $\mathbb Q$; floating-point
values are printed only for readability.

\begin{verbatim}
QQ = RationalField()
t = QQ(9)/19
s = QQ(11)/36
N = {(0,0):-s, (1,0):-t, (0,1):t, (1,1):QQ(1)}
c = {}
for j in range(9):
    for k in range(9):
        c[j,k] = (N.get((j,k),QQ(0))
                  - t*c.get((j-1,k),QQ(0))
                  + t*c.get((j,k-1),QQ(0))
                  + s*c.get((j-1,k-1),QQ(0)))
E = QQ(0)
for j in range(9):
    for k in range(9):
        q = c[j,k]^2
        if q == 0:
            continue
        y = (1-q)/(1+q)
        yp = y
        y2 = y^2
        S = QQ(0)
        for ell in range(300):
            S += yp/QQ(2*ell+1)
            yp *= y2
        E += 2*q*S
target = QQ(21313)/10000
margin = E - target
print(E.n(digits=55))
print(margin.n(digits=55))
print(len(str(E.numerator())), len(str(E.denominator())))
assert margin > 0
\end{verbatim}

The verifier was audited with SageMath~10.6.  The ancillary file
\nolinkurl{BH_entropy_certificate.sage} has SHA-256 digest
{\footnotesize\nolinkurl{7ebcd63aed2bfb555b13f667c76fe61606eb09e00bd4fdbf7dc2b5768399dc2e}}.
Its expected output begins
\begin{verbatim}
2.1313125426572081568251346999176943007978950832196
0.0000125426572081568251346999176943007978950832196
741253 741253
\end{verbatim}
Successful completion is therefore an exact integer-arithmetic certificate,
not a floating-point test.

\section*{Acknowledgments}

\subsection*{Funding} D. Pellegrino is supported by Grants No.~406457/2023-9
(CNPq/MCTI N\textsuperscript{o}~10/2023), No.~403964/2024-5
(MCTI/CNPq N\textsuperscript{o}~16/2024), and No.~305807/2025-0 from the
Conselho Nacional de Desenvolvimento Cient\'ifico e Tecnol\'ogico (CNPq,
Brazil). E. Teixeira gratefully acknowledges support from the Grayce B. Kerr
Chair funds at Oklahoma State University.

This work was conducted in part within the DARPA ExpMath project
\emph{``A Human-Centered Framework for AI-Mathematician Collaboration in
Research-Level Mathematics''} (Agreement No.~HR0011262E029), in which
E. Teixeira serves as a co-principal investigator and gratefully acknowledges
partial support.  The views and conclusions expressed here are those of the
authors and should not be interpreted as representing the official policies
of the Department of Defense or the U.S.\ Government.

\subsection*{\bf Computational verification and machine assistance.}
The finite entropy certificate for the explicit rational-inner witness was
verified in SageMath~10.6 using exact rational arithmetic. The complete
verifier is reproduced in the manuscript and supplied as the ancillary file
\nolinkurl{BH_entropy_certificate.sage}. During the preparation of the paper,
the \textsc{Lea} prover, developed within the ExpMath project, was used to
compare formulations, trace logical dependencies, stress-test candidate
arguments, and assist with routine \LaTeX{} preparation. All machine-generated suggestions and computations were treated as provisional.
The authors independently verified the mathematical arguments and computational
outputs, made every final mathematical and expository decision, and take full
responsibility for the contents of the manuscript.

\end{document}